\documentclass[reqno,10pt]{amsart}

\usepackage{geometry}
\usepackage{graphicx}
\usepackage{epstopdf}

\usepackage{amsmath,amssymb}
\usepackage{bm}
\usepackage{mathtools}
\usepackage{stmaryrd}
\usepackage{mathrsfs}
\usepackage{euscript}
\usepackage{amsfonts}

\usepackage{amsthm}
\usepackage{empheq}
\usepackage{cases}
\numberwithin{equation}{section}

\usepackage{caption}
\usepackage{subcaption}
\usepackage{booktabs}
\usepackage{array}
\usepackage{tabularx}
\usepackage{multirow}
\usepackage{xcolor}

\usepackage[numbers,sort]{natbib}
\usepackage[colorlinks=true, linkcolor=blue, citecolor=blue, urlcolor=cyan]{hyperref}

\usepackage{enumitem}
\usepackage{float}

\newtheorem{theorem}{Theorem}[section]
\newtheorem{lemma}[theorem]{Lemma}
\newtheorem{proposition}[theorem]{Proposition}

\theoremstyle{definition}

\newtheorem{example}[theorem]{Example}

\theoremstyle{remark}
\newtheorem{remark}{Remark}

\newtheoremstyle{assumpstyle}
{8pt}
{8pt}
{\normalfont}
{}
{\bfseries}
{.}
{ }
{}

\theoremstyle{assumpstyle}
\newtheorem{assumption}{Assumption}

\title[Analysis of $L^{p+1}$-normalized gradient flow]{Convergence analysis of $L^{p+1}$-normalized gradient flow for action ground state of nonlinear Schr\"odinger equation}

\author[W. Liu]{Wei Liu}
\address{W.~Liu: College of Science, National University of Defense Technology, Changsha, 410073, China}
\email{wl@nudt.edu.cn}

\author[T. Wang]{Tingfeng Wang}
\address{T.~Wang: School of Mathematics and Statistics, Wuhan University, Wuhan, 430072, China}
\email{tingfengwang@whu.edu.cn}

\author[X. Zhao]{Xiaofei Zhao}
\address{X.~Zhao: School of Mathematics and Statistics \& Computational Sciences, Wuhan University, Wuhan, 430072, China}
\email{matzhxf@whu.edu.cn}

\date{}

\begin{document}

\maketitle

\begin{abstract}
This paper presents a rigorous convergence analysis of the $L^{p+1}$-normalized gradient flow with asymptotic Lagrange multiplier (GFALM) method for computing the action ground state of the nonlinear Schr\"odinger equation in the focusing case. First, a general global convergence theory is established for the semi-discrete GFALM scheme, guaranteeing the existence of an accumulation point and a convergent subsequence. Then, under additional non-degeneracy assumptions, a local exponential convergence rate is rigorously proven. This result is further extended to the fully discrete case using a Fourier pseudo-spectral discretization. The analysis is achieved by characterizing the local geometry of the $L^{p+1}$-constrained manifold near the ground state, establishing a quadratic growth property of the energy functional, and deriving a \L{}ojasiewicz-type gradient inequality. Finally, the paper also investigates the exponential convergence of the associated continuous-time gradient flow, providing a theoretical foundation for future numerical discretizations. This work extends existing convergence analyses for energy ground states, addressing the challenges posed by the $L^{p+1}$ constraint, especially the absence of an inner-product structure.

\noindent\textbf{Keywords.} nonlinear Schr\"odinger equation; action ground state; 
normalized gradient flow; $L^{p+1}$ constraint; convergence analysis; full discretization.

\noindent\textbf{AMS(2010) subject classifications.} 65M12, 35K20, 35K35, 35K55, 65Z05
\end{abstract}

\section{Introduction}
The nonlinear Schr\"odinger equation (NLS)  widely applies in quantum
mechanics, nonlinear optics and deep water waves. Its stationary state solution  $\psi(\mathbf{x},t)=\mathrm{e}^{\mathrm{i}\omega t}\phi(\mathbf{x})$ with  
\begin{align*}
	\mathrm i\partial_t\psi(\mathbf x,t)
	= -\frac{1}{2}\Delta \psi(\mathbf x,t) + V(\mathbf x)\,\psi(\mathbf x,t)
	+ \beta|\psi(\mathbf x,t)|^{p-1}\psi(\mathbf x,t), \quad \mathbf x \in \mathbb{R}^{d},\ t>0,
\end{align*}
is relevant to many wave coherent structures that are of importance in applications, and  
the standing wave or solitary wave $\phi(\mathbf x)$ solves the stationary NLS:
\begin{align}
	-\frac{1}{2}\Delta \phi(\mathbf{x})
	+V(\mathbf{x})\,\phi(\mathbf{x})
	+\beta\bigl|\phi(\mathbf{x})\bigr|^{p-1}\phi(\mathbf{x})
	+\omega\,\phi(\mathbf{x})
	=0, \quad  
    \mathbf x \in \mathbb{R}^{d}.
	\label{eq:semilinear-elliptic-rot}
\end{align}
Here $d\in\mathbb{N}_+$, $V(\mathbf{x})$ denotes an external potential function, $\omega\in\mathbb{R}$ is called the chemical potential, $p>1$ is a given exponent, and $\beta\in\mathbb{R}$ measures the strength of the nonlinear self-interaction with $\beta>0$ and $\beta<0$ corresponding to the defocusing and focusing cases, respectively. For $d\ge 3$, $1<p<(d+2)/(d-2)$ is assumed for well-posedness \cite{berestycki1983nonlinearI, pohozaev1965eigenfunctions}. As a matter of fact, under certain conditions, (\ref{eq:semilinear-elliptic-rot}) possesses infinitely many \cite{berestycki1983nonlinearII, strauss1977existence} nontrivial solutions, i.e., $\phi\neq0$. To characterize the stable and physically relevant ones, two kinds of ground states (GSs) have been defined in the literature.

The first kind is the energy GS that minimizes the energy functional $ E(\phi)$ subject to a  constrained mass $m>0$:
\begin{align}\label{energy gs def}
 \min\{E(\phi)\,:\, \|\phi\|_{L^{2}} = m\},\mbox{ where }   E(\phi) := \frac{1}{2}\|\nabla \phi\|_{L^{2}}^{2} + \int_{\mathbb{R}^d} V|\phi|^2\,{\rm d}\mathbf{x}
    + \frac{2\beta}{p+1}\|\phi\|_{L^{p+1}}^{p+1}.
\end{align}
In this case, the chemical potential $
\omega$ in \eqref{eq:semilinear-elliptic-rot} becomes the Lagrange multiplier, 
and very rich studies  
\cite{bao2013mathematical,bao2004computing, faou2018convergence, liu2021normalized, wang2014projection, zhuang2019efficient,altmann2021jmethod, cances2010numerical, chang2007computing, dion2007ground,antoine2017efficient, danaila2010new, danaila2017computation, henning2020sobolev,CDLX2023JCP} have been devoted to the  mathematical analysis and computational methods of such GS. The second kind of GS considers the action functional $S_{\omega}(\phi)$ with a prescribed $\omega\in\mathbb{R}$:
\begin{align}
	S_{\omega}(\phi)
	:= E(\phi)
	+ \omega\|\phi\|_{L^2}^2, \label{eq:action-functional-S}
\end{align}
which is minimized among all nontrivial stationary state solutions. It has a variational characterization via the minimization of $S_{\omega}(\phi)$ under the associated Nerhari constraint, i.e.,
\begin{equation}\label{action gs def org}
    \min\{S_\omega(\phi)\,:\, K_\omega(\phi)=0,\ \phi\neq0\},
\end{equation}
where $K_\omega(\phi)= \frac{1}{2}\|\nabla \phi\|_{L^{2}}^{2} + \int_{\mathbb{R}^d} V|\phi|^2\,{\rm d}\mathbf{x} + \beta\|\phi\|_{L^{p+1}}^{p+1}+\omega\|\phi\|_{L^{2}}^{2}$.
This action GS problem was popularized by Berestycki and Lions \cite{berestycki1983nonlinearI} for the focusing NLS, i.e., $\beta<0$, which has received adequate further investigations on the theoretical side \cite{ardila2021global, berestycki1983nonlinearI, fukuizumi2001stability, fukuizumi2003instability, shatah1985instability,Dovetta}. In particular, it has been understood recently in \cite{Dovetta,Jeanjean} by theories and in \cite{WangChunshan,liu2023computing} by more general numerical experiments that any energy GS for the focusing NLS must also be an action GS, while the reverse is believed to fail \cite{Dovetta}. This indicates that the set of focusing action GS in fact contains a broader class of standing waves for \eqref{eq:semilinear-elliptic-rot}. To numerically compute the action GS, a direct gradient flow of the constrained minimization \eqref{action gs def org} has been utilized in \cite{WangChunshan}, and we in \cite{liu2023computing} by deriving a mathematically equivalent formulation of \eqref{action gs def org}:
\begin{align}
	\min\left\{\, Q(u)= \frac{1}{2}\|\nabla u\|_{L^{2}}^{2} + \int_{\mathbb{R}^d} V|u|^2\, {\rm d}\mathbf{x}  + \omega\|u\|_{L^2}^2\,:\, \|u\|_{p+1}=1 \right\} \label{action gs eqv},
\end{align}
have proposed a simpler scheme named the normalized gradient flow with asymptotic Lagrange multiplier (GFALM). The GFALM preserves the monotonic decay of $Q$ and can efficiently find the action GS. The practical effectiveness of GFALM is so far in the awaiting of theoretical support, and this is going to be the theme of the paper. For corresponding works on the defocusing case, we refer to \cite{liu2023computing,liu2025action,liu2025computing,ChangWenZhao}.

Assuming $\beta<0$ in \eqref{eq:semilinear-elliptic-rot}, this paper aims to develop systematic convergence theories for GFALM in computing the focusing action GS. To this end, we shall first briefly review GFALM from \cite{liu2023computing}, and then for its semi-discretization version, we shall present a general and global convergence theory that guarantees under minor conditions an accumulation point and convergent subsequence of the numerical series produced from the scheme. Subsequently, our main effort will be devoted to rigorously establishing the practically observed exponential convergence rate of GFALM and extending this result to the fully discrete case.

The existing convergence analysis works so far are only devoted to the energy GS \eqref{energy gs def}, e.g.,  \cite{faou2018convergence,henning2020sobolev,henning2023,zhang2022exponential,CLLZ2024SINUM,feng2025ima}, where the analysis can indeed benefit from the mass constraint, thanks to the linear inner-product structure of the $L^{2}$ setup. Our target problem \eqref{action gs eqv} with the $L^{p+1}$ constraint can be viewed as a generalization, and the absence of inner-product structure makes the geometric analysis and algebraic manipulations essentially different from the $L^{2}$ case. To address these challenges, we develop a new local convergence framework tailored to the $L^{p+1}$-constrained manifold: First, we characterize the local geometry of the $L^{p+1}$-sphere, elucidating the differential-geometric structure of the constraint set near GS; Then, under suitable non-degeneracy assumptions, we establish a quadratic growth property of the $Q$ functional in a neighborhood of the GS, which yields an equivalence between the difference in functional values and the squared discrete $H^1$-distance between functions; Building on this, we show the Lyapunov stability of the scheme, and lastly, by deriving a discrete \L{}ojasiewicz-type gradient inequality, we obtain the exponential convergence of the  GFALM towards its accumulation point. The counterpart of exponential convergence analysis is also established in the end for the continuous gradient flow of \eqref{action gs eqv}, giving rise to the theoretical possibility for more future design of numerical discretizations.

The rest of the paper is organized as follows. In Section~\ref{sec. 2}, we review the GFALM method and establish its first broadly applicable convergence result. Section~\ref{sec. 3} develops the analytical framework that rigorously proves the optimal exponential convergence rate of the GFALM, and extend this framework to the fully discrete case using Fourier spectral discretization. Section~\ref{sec:convergence-nGF} investigates the convergence for the continuous-time normalized gradient flow. Some conclusions are drawn in Section~\ref{sec. 5}. 
For later use, let us here collect and clarify some notation and conventions: $\langle\cdot,\cdot\rangle$ denotes the standard $L^2$-inner product; $a\lesssim b$ means $|a|\le C\, b$ and $a\gtrsim b$ means $|a|\ge C\, b$, for some constant $C>0$ independent of temporal discretization parameters (such as the time step $\tau$ and the time level $n$); $a\asymp b$ means both $a\lesssim b$ and $b\lesssim a$.

\section{GFALM method and its global convergence} \label{sec. 2}

This section will first briefly review the $L^{p+1}$-formulation of the GS and the GFALM proposed in \cite{liu2023computing}. Then, a basic but broadly applicable global convergence result of GFALM will be established.

\subsection{Preliminary of GFALM method}
Throughout the work, we consider $\beta<0$ in \eqref{eq:semilinear-elliptic-rot} or \eqref{eq:action-functional-S}. For numerical computation and theoretical analysis, we consider the problem on a bounded domain $\Omega \subset \mathbb{R}^d$ with appropriate boundary conditions (e.g., homogeneous Dirichlet or periodic boundary conditions). This truncation is justified since the GS under consideration exhibits decay at infinity and  the truncation error becomes negligible by choosing $\Omega$ sufficiently large. 
\begin{assumption}\label{assump:potential-condition}
	Let $ 1 < p < \frac{d+2}{d-2} $ for $ d \geq 3 $ and $ 1 < p < \infty $ for $ d = 1, 2 $. Assume that $V\in L^{\infty}(\Omega)$ and 
	$ V(\mathbf{x}) \geq 0 $ for all $ \mathbf{x} \in \Omega $.
\end{assumption}
For simplicity, we denote $X = H^{1}(\Omega)$. For all $p$ satisfying Assumption~\ref{assump:potential-condition}, $X$ is (compactly) embedded into $ L^{p+1}(\Omega)$, and the related functionals $ S_{\omega} $, $K_{\omega}$ and $Q$ are well-defined in $H^1(\Omega)$. When $V\in L^{\infty}(\Omega)$, the linear operator $-\frac{1}{2}\Delta + V $ has a countably infinite discrete set of eigenvalues with the smallest eigenvalue $\lambda_0$ being simple \cite{GT}: 
\begin{align}
    \lambda_0 := \inf_{u\in X, \, \|u\|_{L^2}=1 }\left( \frac{1}{2}\|\nabla u\|_{L^2}^2 + \int_{\Omega}V|u|^2\,{\rm d}\mathbf{x} \right). \label{eq:omega-lambda-0-def}
\end{align} 
For the existence of GS, one needs to confine the $\omega$ as:
\begin{assumption}\label{assump:omega-conditions}
	Assume that $\omega$ satisfies $\omega > -\lambda_0$.
\end{assumption}

Under Assumptions~\ref{assump:potential-condition}\&\ref{assump:omega-conditions}, the GS $\phi_{g}$ of \eqref{eq:action-functional-S} is guaranteed and can be equivalently described as the following $L^{p+1}$-normalization formulation (detailed in \cite{liu2023computing}): $\phi_{g} = \left( Q(u_{g}) / (-\beta) \right)^{1/(p-1)}u_{g}$, where
\begin{align}
	u_{g}\in \arg\min\{\, Q(u):\ u\in \mathcal S_{p+1}\, \} \label{eq:ug-definition}
\end{align}
with $\mathcal{S}_{p+1} := \{u\in X:\, \|u\|_{L^{p+1}}=1\}$ and a quadratic energy functional
\begin{align}
	Q(u) := \frac{1}{2}\|\nabla u\|_{L^{2}}^{2} + \int_{\Omega} V|u|^2\, {\rm d}\mathbf{x}  + \omega\|u\|_{L^2}^2. \label{eq:action-functional-Q}
\end{align}
We shall still refer to $u_g$ from \eqref{eq:ug-definition} as GS for simplicity. It in some sense could be viewed as a generalization of the energy GS with prescribed mass, whose analysis however is quite different, as we shall see later due to the absence of inner-product structure.

To solve \eqref{eq:ug-definition}, it is natural to consider a normalized gradient flow. With the variation of the quadratic functional $Q$,
\begin{align*}
	\nabla Q(u) := \frac{\delta Q}{\delta \bar{u}}(u,\bar{u}) =  \left( -\frac{1}{2}\Delta + V + \omega \right)u =: \mathcal{A}u,
\end{align*}
the continuous normalized gradient flow associated with \eqref{eq:ug-definition} reads: for $u=u(\mathbf{x},t)$,
\begin{align}
	\partial_{t}u = -\mathcal{A}u+ \lambda(u)\,|u|^{p-1}u, \quad t\ge 0,\ \mathbf{x}\in\Omega, \label{eq:constrained-gradient-flow}
\end{align}
where $\lambda(u) = {\left\langle \mathcal{A}u, |u|^{p-1} u\right\rangle}\big/{\|u\|_{L^{2p}}^{2p}}$  is the Lagrange multiplier to enforce the constraint $\|u\|_{L^{p+1}} = 1$. 
In such a way, it is straightforward to deduce from \eqref{eq:ug-definition} that 
\begin{align}
	\frac{{\rm d}}{{\rm d} t}\|u(\cdot,t)\|_{L^{p+1}}^{p+1} = 0, \quad 
	\frac{{\rm d}}{{\rm d} t}Q(u(\cdot,t)) = -2\|\partial_{t} u(\cdot,t)\|_{L^{2}}^{2} \le 0,\quad \forall t\geq0. 
	\label{eq:lpplus1-conservation-energy-decay}
\end{align}
The continuous-time gradient flow \eqref{eq:constrained-gradient-flow} forms an effective foundation for numerical computations, while its direct discretizations could suffer from the complexity of $\lambda(u)$ on practical efficiency \cite{liu2025computing} and numerical analysis. Further discussion of \eqref{eq:constrained-gradient-flow} will be presented in Section~\ref{sec:convergence-nGF}.

By the Euler--Lagrange equation of \eqref{eq:ug-definition}, we have at any stationary point $\widetilde{u}\in \mathcal{S}_{p+1}$,
\begin{align*}
	\mathcal{A}\widetilde{u} = \widetilde{\lambda}(\widetilde{u}) |\widetilde{u}|^{p-1}\widetilde{u} \quad \mbox{with }\; \widetilde{\lambda}(\widetilde{u}) := \frac{Q(\widetilde{u})}{\|\widetilde{u}\|_{L^{p+1}}^{p+1}}=Q(\widetilde{u}). 
\end{align*}
Motivated by this observation, \cite{liu2023computing} proposed using the asymptotic Lagrange multiplier in \eqref{eq:constrained-gradient-flow} followed by a piecewise normalization step in time, which leads to 
\begin{align}
	\begin{cases}
		\partial_{t}u = -\left( \mathcal{A} - \widetilde{\lambda}(u(\cdot,t_{n}))\,|u(\cdot,t_{n})|^{p-1} \right)u, & t\in \left[ t_{n}, t_{n+1} \right), \\[0.2em]
		u(\mathbf{x},t_{n}) = u(\mathbf{x},t_{n}^{+}) = \dfrac{u(\mathbf{x},t_{n}^{-})}{\|u(\mathbf{x},t_{n}^{-})\|_{L^{p+1}}}, & n\ge 0,
	\end{cases} \label{eq:GFALM}
\end{align}
where $t_n=n\tau$ ($n\in\mathbb{N}$) with a selected time step size $\tau>0$, and $u(\mathbf{x},0)$ is some initial guess for the minimizer of \eqref{eq:ug-definition}. This simplification of the multiplier yields considerable flexibility in the time discretization of \eqref{eq:GFALM}.

A backward-forward Euler discretization has been further suggested for \eqref{eq:GFALM} in \cite{liu2023computing}, forming the following GFALM scheme on a box domain $\Omega\subset\mathbb{R}^d$ with homogeneous Dirichlet or periodic boundary conditions: 
\begin{subequations}\label{eq:GFALM-semi-disc.}
	\begin{align}
		&\frac{1}{\tau}\bigl(\widetilde{u}^{n+1} - u^{n}\bigr) = -\widetilde{\mu}^{n+1}, \quad  
		\widetilde{\mu}^{n+1} = \Bigl( -\frac{1}{2}\Delta + \alpha \Bigr)\widetilde{u}^{n+1} + \Bigl( V + \omega - \widetilde{\lambda}(u^{n})|u^{n}|^{p-1} - \alpha \Bigr)u^{n}, \label{eq:GFALM-semi-suba.} \\
		&u^{n+1} = \widetilde{u}^{n+1} \big/ \|\widetilde{u}^{n+1}\|_{L^{p+1}}. \label{eq:GFALM-semi-subb.}
	\end{align}
\end{subequations}
Here $\alpha\geq0$ is a stabilization parameter, which is chosen to such that 
\begin{align}
	\alpha \ge \frac{1}{2}\max\Big\{0,\ \operatorname{ess\,sup}_{\mathbf{x}\in \Omega}\big( V(\mathbf{x})+\omega \big)\Big\}+\frac{1}{2}, \label{eq:alphan-condition}
\end{align}
to ensure the unconditional decay of the energy $Q$ stated below.

\begin{lemma}[Energy-decaying property]\label{lem:energy_decaying}
	Under Assumptions~\ref{assump:potential-condition}\&\ref{assump:omega-conditions}, and with $\alpha$ satisfying \eqref{eq:alphan-condition}, the GFALM scheme \eqref{eq:GFALM-semi-disc.} possesses the unconditional energy-decaying property for $Q(u)$: $\forall\,\tau>0$,
    \begin{align}
		Q(u^{n+1}) - Q(u^{n}) 
        \le -\frac{\tau^2\|\nabla\widetilde{\mu}^{n+1}\|_{L^2}^{2}+\tau(\tau+4)\|\widetilde{\mu}^{n+1}\|_{L^2}^{2}}{2\|\widetilde{u}^{n+1}\|_{L^{p+1}}^{2}}
        \le -\frac{\tau^2\|\widetilde{\mu}^{n+1}\|_{H^1}^{2}}{2\|\widetilde{u}^{n+1}\|_{L^{p+1}}^{2}},
        \quad n\geq0. \label{eq:energy_decaying}
	\end{align}
\end{lemma}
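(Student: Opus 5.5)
The approach exploits two facts: $Q$ is a quadratic form, and the normalization is a scalar rescaling. Write $Q(u)=\langle \mathcal{A}u,u\rangle$, which is $\ge0$ by Assumption~\ref{assump:omega-conditions}. Then $Q(u^{n+1})=Q(\widetilde u^{n+1})/\|\widetilde u^{n+1}\|_{L^{p+1}}^2$, so it suffices to prove
\begin{align*}
Q(\widetilde u^{n+1})-\|\widetilde u^{n+1}\|_{L^{p+1}}^2\,Q(u^n)\le -\tfrac12\bigl(\tau^2\|\nabla\widetilde\mu^{n+1}\|_{L^2}^2+\tau(\tau+4)\|\widetilde\mu^{n+1}\|_{L^2}^2\bigr).
\end{align*}
Abbreviate $\mu=\widetilde\mu^{n+1}$, $u=u^n$, $\lambda=\widetilde\lambda(u^n)=Q(u^n)\ge0$, $g=|u|^{p-1}u$ and $b=\mathrm{Re}\langle g,\mu\rangle$. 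Recall $\|u\|_{L^{p+1}}=1$, so $\langle g,u\rangle=1$ and $\|g\|_{L^{(p+1)/p}}=1$.

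\textbf{Step 1: lower bound for the normalization factor.} Since $\widetilde u^{n+1}=u-\tau\mu$, we have $\langle g,\widetilde u^{n+1}\rangle=1-\tau\langle g,\mu\rangle$. H\"older's inequality with exponents $(p+1)/p$ and $p+1$ gives $|1-\tau b|\le|\langle g,\widetilde u^{n+1}\rangle|\le\|\widetilde u^{n+1}\|_{L^{p+1}}$. Hence $\|\widetilde u^{n+1}\|_{L^{p+1}}^2\lambda\ge(1-\tau b)^2\lambda$. Taking the absolute value avoids any case split on the sign of $1-\tau b$. This step is the substitute for the $L^2$ inner-product identity, and I expect it to be the main conceptual obstacle; it is resolved simply by pairing with the dual element $g$.

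\textbf{Step 2: rewrite $\mathcal{A}u$ through the scheme.} From \eqref{eq:GFALM-semi-suba.} with $\widetilde u^{n+1}-u=-\tau\mu$,
\begin{align*}
\mathcal{A}u=\mu+\tau\bigl(-\tfrac12\Delta+\alpha\bigr)\mu+\lambda g .
\end{align*}
Consequently $\mathrm{Re}\langle\mathcal{A}u,\mu\rangle=\|\mu\|_{L^2}^2+\tau\bigl(\tfrac12\|\nabla\mu\|_{L^2}^2+\alpha\|\mu\|_{L^2}^2\bigr)+\lambda b$.

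\textbf{Step 3: expand and compare.} Expand $Q(\widetilde u^{n+1})=\lambda-2\tau\,\mathrm{Re}\langle\mathcal{A}u,\mu\rangle+\tau^2Q(\mu)$ and $\lambda(1-\tau b)^2=\lambda-2\tau\lambda b+\tau^2\lambda b^2$. Subtracting, the $\lambda$ and $\lambda b$ terms cancel, and the difference equals
\begin{align*}
-2\tau\|\mu\|_{L^2}^2-\tfrac12\tau^2\|\nabla\mu\|_{L^2}^2+\tau^2\bigl(\langle(V+\omega)\mu,\mu\rangle-2\alpha\|\mu\|_{L^2}^2\bigr)-\tau^2\lambda b^2 .
\end{align*}
Drop $-\tau^2\lambda b^2\le0$. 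Condition \eqref{eq:alphan-condition} gives $\operatorname{ess\,sup}(V+\omega)-2\alpha\le-1$, which yields exactly $-\tfrac12\bigl(\tau^2\|\nabla\mu\|_{L^2}^2+\tau(\tau+4)\|\mu\|_{L^2}^2\bigr)$. Dividing by $\|\widetilde u^{n+1}\|_{L^{p+1}}^2$ gives the first inequality in \eqref{eq:energy_decaying}.

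For the second inequality, use $\tau(\tau+4)\ge\tau^2$, which gives a lower bound of $\tau^2\|\mu\|_{H^1}^2$ for the numerator. Beyond Step 1, the only point needing care is to check that $\widetilde u^{n+1}\neq0$, so that the normalization is well defined. If $\widetilde u^{n+1}=0$, then $1-\tau b=0$ by Step 1, so $\mu=u/\tau$. Substituting into the scheme gives $\bigl(V+\omega-\lambda|u|^{p-1}-\alpha\bigr)u=u/\tau$. Pairing this with $u$ and using the sign of $\alpha$ should contradict $\lambda\ge0$; I would verify this detail there.
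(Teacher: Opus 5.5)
Your proof is correct. It shares the paper's skeleton: expand $Q(\widetilde u^{n+1})$ through the scheme, compare it with $Q(u^n)\|\widetilde u^{n+1}\|_{L^{p+1}}^2$, and use $\lambda=Q(u^n)$ together with \eqref{eq:alphan-condition}. The step that bounds the normalization factor from below is different.

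The paper, following \cite{liu2023computing}, sets $e=\widetilde u^{n+1}-u^n$ and uses the identity
\[
Q(\widetilde u^{n+1})=\lambda\int_\Omega|u^n|^{p-1}|\widetilde u^{n+1}|^2\,\mathrm{d}\mathbf{x}-\tfrac12\|\nabla e\|_{L^2}^2-\int_\Omega\Bigl(\tfrac2\tau+2\alpha-V-\omega+\lambda|u^n|^{p-1}\Bigr)|e|^2\,\mathrm{d}\mathbf{x}.
\]
It then applies the \emph{quadratic} H\"older bound $\int_\Omega|u^n|^{p-1}|\widetilde u^{n+1}|^2\,\mathrm{d}\mathbf{x}\le\|\widetilde u^{n+1}\|_{L^{p+1}}^2$ (exponents $\frac{p+1}{p-1}$ and $\frac{p+1}{2}$) and discards $\lambda\int_\Omega|u^n|^{p-1}|e|^2\,\mathrm{d}\mathbf{x}\ge0$.

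You instead use the \emph{linear} duality bound $|\langle g,\widetilde u^{n+1}\rangle|\le\|\widetilde u^{n+1}\|_{L^{p+1}}$ and discard $\tau^2\lambda b^2$. These are two splittings of the same exact identity. One checks that $\int_\Omega|u^n|^{p-1}|\widetilde u^{n+1}|^2\,\mathrm{d}\mathbf{x}-(1-\tau b)^2=\int_\Omega|u^n|^{p-1}|e|^2\,\mathrm{d}\mathbf{x}-\tau^2b^2\ge0$ by weighted Cauchy--Schwarz. So your weaker lower bound for $\|\widetilde u^{n+1}\|_{L^{p+1}}^2$ is compensated exactly by discarding a smaller nonnegative term, and both routes reach the same estimate.

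Your version is more transparent. Its only nonlinear input is $\|g\|_{L^{(p+1)/p}}=1$, and everything else is the exact expansion of the quadratic form $Q(u^n-\tau\mu)$, so no auxiliary identity is needed. The paper's identity, in exchange, displays the stabilizing weight pointwise.

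Two minor remarks.

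First, with $\operatorname{ess\,sup}(V+\omega)-2\alpha\le-1$ you actually get $-\frac12\bigl(\tau^2\|\nabla\mu\|_{L^2}^2+2\tau(\tau+2)\|\mu\|_{L^2}^2\bigr)$. This is stronger than the stated bound, not exactly equal to it, which is harmless.

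Second, the paper does not discuss $\widetilde u^{n+1}\neq0$, and the route you sketch for it does not work as phrased. Condition \eqref{eq:alphan-condition} does not force $V+\omega-\alpha\le0$, so the sign of $\alpha$ alone does not contradict $\lambda\ge0$. The argument closes once you substitute $\lambda=Q(u^n)$ after pairing, which gives $\frac1\tau\|u^n\|_{L^2}^2=-\frac12\|\nabla u^n\|_{L^2}^2-\alpha\|u^n\|_{L^2}^2$. More simply, your Step~3 inequality holds before any division. Thus $\widetilde u^{n+1}=0$ would force $0\le-\frac12\bigl(\tau^2\|\nabla\mu\|_{L^2}^2+\tau(\tau+4)\|\mu\|_{L^2}^2\bigr)$ with $\mu=u^n/\tau\neq0$, a contradiction.
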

\begin{proof}
Following the proof of Theorem~2.4 in \cite{liu2023computing}, the GFALM scheme \eqref{eq:GFALM-semi-disc.} implies the identity
	\begin{align*}
		Q(\widetilde{u}^{n+1})
        &=\widetilde{\lambda}(u^{n})\int_{\Omega}|u^n|^{p-1}|\widetilde{u}^{n+1}|^{2}d\mathbf{x}-\frac12\|\nabla(\widetilde{u}^{n+1}-u^n)\|_{L^2}^{2} \\
        &\qquad -\int_{\Omega}\left(\frac{2}{\tau}+2\alpha-V-\omega+\widetilde{\lambda}(u^{n})|u^n|^{p-1}\right)|\widetilde{u}^{n+1}-u^n|^{2}d\mathbf{x}.
	\end{align*}
    Applying the facts that $\widetilde{\lambda}(u^{n})=Q(u^n)>0$, $\int_{\Omega}|u^n|^{p-1}|\widetilde{u}^{n+1}|^{2}d\mathbf{x}\leq\|u^{n}\|_{L^{p+1}}^{p-1}\|\widetilde{u}^{n+1}\|_{L^{p+1}}^2=\|\widetilde{u}^{n+1}\|_{L^{p+1}}^2$, and $2\alpha-V-\omega+\widetilde{\lambda}(u^{n})|u^n|^{p-1}\geq\alpha\geq\frac12$ a.e. $\mathbf{x}\in\Omega$, results in
	\begin{align*}
		Q(\widetilde{u}^{n+1})\leq Q(u^{n})\|\widetilde{u}^{n+1}\|_{L^{p+1}}^2-\frac12\|\nabla(\widetilde{u}^{n+1}-u^n)\|_{L^2}^{2}-\left(\frac12+\frac{2}{\tau}\right)\|\widetilde{u}^{n+1}-u^n\|_{L^2}^{2}.
	\end{align*}
    The assertion is obtained immediately by substituting $\widetilde{u}^{n+1}-u^n=-\tau\widetilde{\mu}^{n+1}$ \eqref{eq:GFALM-semi-suba.} and noting that $Q(u^{n+1})={Q(\widetilde{u}^{n+1})}/{\|\widetilde{u}^{n+1}\|_{L^{p+1}}^{2}}$.
\end{proof}

\subsection{A basic unconditional convergence result} \label{sec.2.2}
The capability of GFALM \eqref{eq:GFALM-semi-disc.} to effectively compute the GS of \eqref{eq:action-functional-S} has already been demonstrated in \cite{liu2023computing}, which is not yet theoretically analyzed. Here, we provide the first convergence result, which is basic but general and global, for the semi-discrete GFALM \eqref{eq:GFALM-semi-disc.} with any step size $\tau>0$. 

\begin{theorem}[Accumulation point \& unconditional convergent subsequence]\label{thm:convergent-subsequences}
	Suppose that Assumptions~\ref{assump:potential-condition}\&\ref{assump:omega-conditions} hold and $\alpha$ satisfies \eqref{eq:alphan-condition}. 
    Furthermore, for $d\geq 3$, we additionally assume $1<p\leq d/(d-2)$. Let  $u^{0}\in H^{1}(\Omega) \cap \mathcal{S}_{p+1}$. Then, for every fixed $\tau>0$:
\begin{enumerate}[label=(\roman*)]
    \item the solution $\left\{ u^{n} \right\}_{n=0}^{\infty}$ of \eqref{eq:GFALM-semi-disc.} has a strongly convergent subsequence $\left\{ u^{n_{j}} \right\}_{j=0}^{\infty}$ in $H^{1}(\Omega)$; 
    \item every accumulation point $u^{\star}\in \mathcal{S}_{p+1}$ of $\left\{ u^{n} \right\}_{n=0}^{\infty}$ in $H^{1}(\Omega)$ satisfies the Euler-Lagrange equation of \eqref{eq:ug-definition} in the weak sense, i.e.,
	\begin{align}\label{eq:Euler-LagrangeEq-weakform}
		\left\langle \mathcal{A}u^{\star}, v \right\rangle = \widetilde{\lambda}(u^{\star})\left\langle |u^{\star}|^{p-1}u^{\star}, v \right\rangle, \quad \forall v\in H^{1}(\Omega),
	\end{align}
    and all accumulation points possess the same Lagrange multiplier and energy: $\widetilde{\lambda}(u^{\star})=Q(u^\star)=\inf_{n\geq0}Q(u^n)$.
\end{enumerate}
\end{theorem}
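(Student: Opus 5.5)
We build everything on the energy-decay Lemma~\ref{lem:energy_decaying}. Since $Q(u^n)$ is nonincreasing and, by Assumption~\ref{assump:omega-conditions}, bounded below, $Q(u)\ge \min\{1,(\omega+\lambda_0)/\dots\}\|u\|_{H^1}^2\gtrsim\|u\|_{H^1}^2$. Indeed, $Q(u)\ge c\|u\|_{H^1}^2$ with $c>0$: interpolate $Q(u)\ge(\omega+\lambda_0)\|u\|_{L^2}^2$ with $Q(u)\ge \frac12\|\nabla u\|^2-|\omega|\|u\|^2$. Hence $\{u^n\}$ is bounded in $H^1$ and $Q(u^n)\downarrow Q_\infty:=\inf_n Q(u^n)\ge c\,C_S^{-2}>0$, where the lower bound uses $1=\|u^n\|_{L^{p+1}}\le C_S\|u^n\|_{H^1}$.

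Next we bound $\|\widetilde u^{n+1}\|_{L^{p+1}}$ uniformly from above. From \eqref{eq:GFALM-semi-suba.}, $\widetilde u^{n+1}=(I+\tau(-\frac12\Delta+\alpha))^{-1}\bigl(u^n-\tau(V+\omega-\alpha-\widetilde\lambda(u^n)|u^n|^{p-1})u^n\bigr)$. The right-hand side is in $L^2$ with a uniform bound, provided $|u^n|^{p}\in L^2$ uniformly, i.e.\ $H^1\hookrightarrow L^{2p}$. This is exactly where the extra restriction $p\le d/(d-2)$ for $d\ge3$ enters, since $2p\le 2d/(d-2)$. Elliptic regularity (or just the $H^1$ energy estimate) then gives $\|\widetilde u^{n+1}\|_{H^1}\le C$, hence $\|\widetilde u^{n+1}\|_{L^{p+1}}\le C$. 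Summing \eqref{eq:energy_decaying} telescopically yields $\sum_n\|\widetilde\mu^{n+1}\|_{H^1}^2\le 2C^2(Q(u^0)-Q_\infty)/\tau^2<\infty$, so $\widetilde\mu^{n+1}\to0$ in $H^1$ and $\widetilde u^{n+1}-u^n\to0$ in $H^1$. We also need $\|\widetilde u^{n+1}\|_{L^{p+1}}\to1$. This follows because $\|\cdot\|_{L^{p+1}}$ is Lipschitz on $H^1$ and $\|u^n\|_{L^{p+1}}=1$, and consequently $u^{n+1}-u^n\to0$ in $H^1$.

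For (i), boundedness gives a subsequence $u^{n_j}\rightharpoonup u^\star$ weakly in $H^1$ and strongly in $L^{p+1}$ and $L^{2p}$ (compact embeddings; for $d\ge3$ with $2p=2d/(d-2)$ only weak convergence in $L^{2p}$ is available, and we handle that case by treating the nonlinearity $|u|^{p-1}u\in L^{2}$ via convergence in $L^{q}$ for $q<2p$ together with uniform bounds). Thus $\|u^\star\|_{L^{p+1}}=1$. To upgrade to strong $H^1$ convergence we use the equation: $\widetilde u^{n_j+1}=(-\frac12\Delta+\alpha)^{-1}\bigl[-\widetilde\mu^{n_j+1}-(V+\omega-\alpha-\widetilde\lambda(u^{n_j})|u^{n_j}|^{p-1})u^{n_j}\bigr]$. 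The bracket converges strongly in $H^{-1}$: the term $\widetilde\mu\to0$ in $H^1$; the linear terms converge strongly in $L^2$; and the nonlinear term $|u|^{p-1}u$ converges in $L^{(p+1)/p}\subset H^{-1}$ by strong $L^{p+1}$ convergence. Here $\widetilde\lambda(u^{n_j})=Q(u^{n_j})\to Q_\infty$. Since $(-\frac12\Delta+\alpha)^{-1}:H^{-1}\to H^1$ is bounded, $\widetilde u^{n_j+1}$ converges strongly in $H^1$, hence so does $u^{n_j}$ (as their difference tends to $0$), and its limit is $u^\star$. Strong convergence then gives $Q(u^\star)=Q_\infty$.

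For (ii), take any accumulation point with $u^{n_j}\to u^\star$ in $H^1$. Test \eqref{eq:GFALM-semi-suba.} against $v\in H^1$: $\langle\widetilde\mu^{n_j+1},v\rangle=\langle\mathcal A\widetilde u^{n_j+1},v\rangle+\langle(\alpha-\frac12\Delta)(u^{n_j}-\widetilde u^{n_j+1}),v\rangle\cdots-\widetilde\lambda(u^{n_j})\langle|u^{n_j}|^{p-1}u^{n_j},v\rangle$. Passing to the limit using $\widetilde\mu\to0$, $\widetilde u^{n_j+1}-u^{n_j}\to0$ in $H^1$, $Q(u^{n_j})\to Q_\infty=Q(u^\star)$, and continuity of the nonlinearity from $L^{p+1}$ into $L^{(p+1)/p}$ gives \eqref{eq:Euler-LagrangeEq-weakform} with multiplier $Q(u^\star)=\inf_nQ(u^n)$. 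Testing with $v=u^\star$ confirms $\widetilde\lambda(u^\star)=Q(u^\star)$. The main obstacle is the uniform upper bound on $\|\widetilde u^{n+1}\|_{L^{p+1}}$ independent of $n$ (for arbitrary $\tau$), together with the compactness upgrade to strong $H^1$ convergence; both rely on the $L^{2p}$ control supplied by the hypothesis $p\le d/(d-2)$.
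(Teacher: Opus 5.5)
Your proof is correct. For part (i) it takes a different route from the paper; your part (ii) is essentially the paper's argument.

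\textbf{The paper's route for (i).} It rewrites the scheme as $(1+\tau\alpha)u^{n+1}-\frac{\tau}{2}\Delta u^{n+1}=\|\widetilde u^{n+1}\|_{L^{p+1}}^{-1}g(u^n)$. It then uses the lower bound of Lemma~\ref{lem:tilde-u-nplus1-gap-1} and the embedding $H^1\hookrightarrow L^{2p}$ to put the right-hand side in $L^2$. This yields a uniform $H^2$ bound on $\{u^n\}_{n\ge1}$ for fixed $\tau$, and the compact embedding $H^2\hookrightarrow H^1$ finishes the argument. It never needs $\widetilde\mu^{n+1}\to0$ for (i).

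\textbf{Your route for (i).} You take a weak $H^1$ limit and upgrade it at the $H^1$ level:
\begin{itemize}
\item the telescoped energy decay gives $\widetilde\mu^{n+1}\to0$ in $H^1$;
\item compactness of $H^1\hookrightarrow L^{p+1}$ makes $|u^{n_j}|^{p-1}u^{n_j}$ converge in $L^{(p+1)/p}\subset H^{-1}$;
\item the bounded inverse $(-\frac12\Delta+\alpha)^{-1}:H^{-1}\to H^1$ then gives strong convergence of $\widetilde u^{n_j+1}$, hence of $u^{n_j}$.
\end{itemize}
The paper phrases (ii) as $\langle\mu^n,v\rangle\to0$ via \eqref{eq:ellipse-equation-mu} plus continuity of $a(\cdot,v)$, which matches what you do.

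\textbf{What each route buys.} Contrary to your closing sentence, your route does not use $p\le d/(d-2)$ anywhere.
\begin{itemize}
\item The upgrade step only needs $L^{(p+1)/p}\subset H^{-1}$.
\item The uniform bound on $\|\widetilde u^{n+1}\|_{H^1}$ can be obtained as in Lemma~\ref{lem:uniform-estimates}, by testing \eqref{eq:ellipse-equation-mu} with $\widetilde\mu^{n+1}$. Alternatively, treat the right-hand side of the $\widetilde u^{n+1}$-equation as an element of $H^{-1}$. Both use only $H^1\hookrightarrow L^{p+1}$.
\item Your side remark about $L^{2p}$ convergence is therefore unnecessary.
\end{itemize}
Your argument thus covers the full range of Assumption~\ref{assump:potential-condition}, for periodic as well as Dirichlet conditions. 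The paper recovers this range only in the Remark after the proof, via $W^{2,q}$ estimates and only for Dirichlet data. In exchange, the paper's route gives a uniform $H^2$ bound on the iterates for fixed $\tau$.

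\textbf{Two cosmetic slips.}
\begin{itemize}
\item $\widetilde\mu^{n_j+1}$ enters the bracket with a plus sign, not a minus sign.
\item The identity displayed in (ii) should read $\widetilde\mu^{n+1}=\mathcal A\widetilde u^{n+1}+(V+\omega-\alpha)(u^n-\widetilde u^{n+1})-\widetilde\lambda(u^n)|u^n|^{p-1}u^n$.
\end{itemize}
Both are harmless, since the discrepancies vanish in the limit because $\widetilde\mu^{n_j+1}\to0$ and $\widetilde u^{n_j+1}-u^{n_j}\to0$ in $H^1$.
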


The proof of Theorem~\ref{thm:convergent-subsequences} will be given after establishing the following lemmas. 
The first is a tool for bounding the $H^1$-norm by the functional $Q$.

\begin{lemma}\label{lem:h1-control-by-Q}
	Under Assumptions \ref{assump:potential-condition}\&\ref{assump:omega-conditions}, there exists a constant $C_0>0$ depending only on $\lambda_0$ and $\omega$ such that
	\begin{align}
		Q(v)\ge C_0\|v\|_{H^{1}}^{2},\quad \forall v\in X. \label{eq:Q-control-H1}
	\end{align}
\end{lemma}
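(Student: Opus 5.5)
The plan is to interpolate between two lower bounds for $Q$: one coming from the spectral gap encoded in $\lambda_0$, and one coming trivially from the gradient term. Write $a(v):=\frac12\|\nabla v\|_{L^2}^2+\int_\Omega V|v|^2\,{\rm d}\mathbf{x}$, so that $Q(v)=a(v)+\omega\|v\|_{L^2}^2$.

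By the definition \eqref{eq:omega-lambda-0-def} and homogeneity, $a(v)\ge\lambda_0\|v\|_{L^2}^2$ for all $v\in X$. Hence
\[
Q(v)\ge(\lambda_0+\omega)\|v\|_{L^2}^2,
\]
and $\lambda_0+\omega>0$ by Assumption~\ref{assump:omega-conditions}. Separately, since $V\ge0$ we also have $a(v)\ge\frac12\|\nabla v\|_{L^2}^2$.

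I then take a convex combination with a parameter $\theta\in(0,1]$. Writing $Q(v)=\theta\bigl(a(v)+\omega\|v\|^2\bigr)+(1-\theta)\bigl(a(v)+\omega\|v\|^2\bigr)$, the first group is bounded using only the gradient term and the second using the spectral bound. This gives
\[
Q(v)\ge\frac{\theta}{2}\|\nabla v\|_{L^2}^2+\bigl(\theta\omega+(1-\theta)(\lambda_0+\omega)\bigr)\|v\|_{L^2}^2 .
\]

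The sign of $\omega$ dictates the choice of $\theta$:
\begin{itemize}
\item If $\omega\ge0$, take $\theta=1$. The $L^2$ coefficient is then $\omega$, which may vanish, so it is safer to choose $\theta$ slightly less than one.
\item In general, the $L^2$ coefficient equals $\lambda_0+\omega-\theta(\lambda_0+\omega-\omega)=(\lambda_0+\omega)-\theta\lambda_0$.
\end{itemize}

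When $\lambda_0\le0$ the coefficient $(\lambda_0+\omega)-\theta\lambda_0$ is at least $\lambda_0+\omega>0$ for every $\theta$. When $\lambda_0>0$ I choose $\theta=\frac{\lambda_0+\omega}{2\lambda_0}$ if $\omega<\lambda_0$, and $\theta=\frac12$ otherwise. In each case the $L^2$ coefficient is at least $\frac12(\lambda_0+\omega)$ or larger. Note that $\lambda_0\ge0$ here anyway, because $V\ge0$.

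Setting $C_0=\min\{\theta/2,\ \tfrac12(\lambda_0+\omega)\}$ yields $Q(v)\ge C_0\|v\|_{H^1}^2$, with $C_0$ depending only on $\lambda_0$ and $\omega$.

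No step is a genuine obstacle. The only care needed is choosing $\theta$ so that both coefficients are strictly positive when $\omega<0$. In that case $-\omega<\lambda_0$, and the choice $\theta=(\lambda_0+\omega)/(2\lambda_0)\in(0,\tfrac12)$ gives $L^2$ coefficient $(\lambda_0+\omega)/2>0$, which handles it.
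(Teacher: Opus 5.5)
Your argument is correct and is essentially the paper's: with $\theta=\frac{\lambda_0+\omega}{2\lambda_0}$, your convex combination reproduces the paper's splitting $\mathcal{A}=\frac{\lambda_0+\omega}{2\lambda_0}\mathcal{A}_0+\bigl(\frac{\lambda_0-\omega}{2\lambda_0}\mathcal{A}_0+\omega I\bigr)$, $\mathcal{A}_0=-\frac12\Delta+V$, and yields the same constants $\frac{\lambda_0+\omega}{4\lambda_0}$ and $\frac{\lambda_0+\omega}{2}$. The only difference is the case split: the paper takes $\theta=1$ when $\omega>0$ and uses the splitting only for $-\lambda_0<\omega\le 0$, which also avoids your loose end of not fixing a specific $\theta$ when $\lambda_0=0$.
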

\begin{proof}
    Since $V(\mathbf{x})\geq0$, it is clear from \eqref{eq:omega-lambda-0-def} that $\lambda_{0} \ge 0$. If $\omega>0$, the assertion is straightforward:
    \[ Q(v) =\left\langle \mathcal{A}v, v \right\rangle \geq\frac{1}{2}\|\nabla v\|_{L^{2}}^{2}+\omega \|v\|_{L^{2}}^{2}\geq \min\left\{\frac12,\omega\right\}\|v\|_{H^{1}}^{2}. \]
    If $-\lambda_{0} < \omega \leq 0$, we perform the following decomposition for $\mathcal{A}$:
    	\begin{align*}
    		\mathcal{A} =\frac{\lambda_{0}+\omega}{2\lambda_{0}}\mathcal{A}_{0}+ \left(\frac{\lambda_{0}-\omega}{2\lambda_{0}}\mathcal{A}_{0} + \omega I\right),
    	\end{align*}
    with $\mathcal{A}_{0}= -\frac{1}{2}\Delta + V$ and $I$ the identity. Therefore,
    	\begin{align*}
    		Q(v) =\left\langle \mathcal{A}v, v \right\rangle 
            \geq \frac{\lambda_{0}+\omega}{4\lambda_{0}}\|\nabla v\|_{L^{2}}^{2}+\frac{\lambda_{0}+\omega}{2} \|v\|_{L^{2}}^{2}
            \geq (\lambda_{0}+\omega)\min\left\{ \frac{1}{4\lambda_{0}},\frac12 \right\}\|v\|_{H^1}^{2},
    	\end{align*}
    which yields the assertion since $\lambda_0+\omega>0$. 
\end{proof}

Denote $\mu^{n}:= \mathcal{A}u^{n} - \widetilde{\lambda}(u^{n})|u^{n}|^{p-1}u^{n}=\mathcal{A}u^{n} - Q(u^{n})|u^{n}|^{p-1}u^{n}$, then subtracting $\mu^{n}$ from \eqref{eq:GFALM-semi-suba.} gives
\begin{align*}
	\widetilde{\mu}^{n+1} - \mu^{n} = -\frac{1}{2}\Delta\left( \widetilde{u}^{n+1} - u^{n} \right)  + \alpha\left( \widetilde{u}^{n+1} - u^{n} \right) 
	= \frac{\tau}{2}\Delta\widetilde{\mu}^{n+1} - \tau\alpha\widetilde{\mu}^{n+1}. 
\end{align*}
Rearranging the terms above yields the following elliptic resolvent equation (with the same boundary conditions as in \eqref{eq:GFALM-semi-disc.}):
\begin{align}
	\left( 1 + \tau\alpha \right) \widetilde{\mu}^{n+1} - \frac{\tau}{2}\Delta\widetilde{\mu}^{n+1} =  \mu^{n}. \label{eq:ellipse-equation-mu}
\end{align}
Based on \eqref{eq:ellipse-equation-mu}, we then establish for the numerical solution some uniform-in-time bounds.

\begin{lemma}\label{lem:uniform-estimates}
    Under Assumptions \ref{assump:potential-condition}\&\ref{assump:omega-conditions}, for the GFALM scheme \eqref{eq:GFALM-semi-disc.} with $\alpha$ satisfying \eqref{eq:alphan-condition}, the following uniform upper bounds hold: 
    \begin{align*}
        \|u^n\|_{H^1}\lesssim 1,\quad 
        \tau\|\widetilde{\mu}^{n+1}\|_{H^1}\lesssim 1,\quad
        \|\widetilde{u}^{n+1}\|_{H^1}\lesssim 1, \quad \forall n\in\mathbb{N}.
    \end{align*}
\end{lemma}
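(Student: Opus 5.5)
The three bounds will be obtained in order, each feeding the next, from the energy decay of Lemma~\ref{lem:energy_decaying} together with the coercivity of Lemma~\ref{lem:h1-control-by-Q}.

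\textbf{Bound on $u^n$.} Lemma~\ref{lem:energy_decaying} shows that $Q(u^n)$ is nonincreasing. Hence $Q(u^n)\le Q(u^0)$ for all $n$. Lemma~\ref{lem:h1-control-by-Q} then gives
\[
\|u^n\|_{H^1}^2\le C_0^{-1}Q(u^0),
\]
which is a bound independent of $\tau$ and $n$.

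\textbf{Bound on $\tau\widetilde\mu^{n+1}$.} Rather than using the energy inequality, I plan to work with the resolvent equation \eqref{eq:ellipse-equation-mu}. The first step is to bound $\mu^n$ in the dual space $H^{-1}$ (or $(H^1)^*$ under periodic boundary conditions). The term $\mathcal A u^n$ satisfies $\|\mathcal A u^n\|_{H^{-1}}\lesssim \|u^n\|_{H^1}$ because $V\in L^\infty$. For the nonlinear term, the Sobolev embedding $H^1\hookrightarrow L^{p+1}$ and H\"older's inequality give
\[
\bigl|\langle |u^n|^{p-1}u^n,v\rangle\bigr|\le \|u^n\|_{L^{p+1}}^p\|v\|_{L^{p+1}}\lesssim \|v\|_{H^1},
\]
using $\|u^n\|_{L^{p+1}}=1$. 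Since also $Q(u^n)\le Q(u^0)$, we get $\|\mu^n\|_{H^{-1}}\lesssim 1$.

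Next, test \eqref{eq:ellipse-equation-mu} with $\tau\widetilde\mu^{n+1}$. This yields
\[
\tau(1+\tau\alpha)\|\widetilde\mu^{n+1}\|_{L^2}^2+\frac{\tau^2}{2}\|\nabla\widetilde\mu^{n+1}\|_{L^2}^2=\tau\langle\mu^n,\widetilde\mu^{n+1}\rangle .
\]
I expect this to be the main obstacle: the estimate must be uniform in $\tau$, including small $\tau$. The left-hand side is at least $\min\{1,\tfrac12\}\,\|\tau\widetilde\mu^{n+1}\|_{H^1}^2$ once $\tau\ge1$. For $\tau<1$ this lower bound degenerates, so I split into the cases $\tau\ge1$ and $\tau<1$.

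\emph{Case $\tau\ge1$.} The right-hand side is at most $\|\mu^n\|_{H^{-1}}\,\|\tau\widetilde\mu^{n+1}\|_{H^1}$. Combining with the lower bound on the left gives $\|\tau\widetilde\mu^{n+1}\|_{H^1}\lesssim\|\mu^n\|_{H^{-1}}\lesssim1$.

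\emph{Case $\tau<1$.} Here I avoid the degenerate lower bound and argue directly from the scheme. Solving \eqref{eq:GFALM-semi-suba.} for $\widetilde u^{n+1}$ gives
\[
\widetilde u^{n+1}=\Bigl(1+\tau\alpha-\tfrac{\tau}{2}\Delta\Bigr)^{-1}\Bigl(u^n-\tau(V+\omega-\widetilde\lambda|u^n|^{p-1}-\alpha)u^n\Bigr).
\]
The resolvent $(1+\tau\alpha-\tfrac\tau2\Delta)^{-1}$ is bounded on $H^1$ uniformly in $\tau$, since it is a contraction on each Fourier or eigen mode and commutes with $\Delta$. It is also bounded from $H^{-1}$ to $H^1$ with norm $\lesssim\tau^{-1}$ when $\tau<1$. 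Therefore $\tau\times$ this resolvent is uniformly bounded from $H^{-1}$ to $H^1$. Applying the first property to $u^n$ and the second to $\tau(\cdots)u^n$, whose $H^{-1}$ norm is $\lesssim\tau$ by the same dual estimates as for $\mu^n$, gives $\|\widetilde u^{n+1}\|_{H^1}\lesssim1$. Then $\tau\widetilde\mu^{n+1}=u^n-\widetilde u^{n+1}$ is bounded in $H^1$.

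\textbf{Bound on $\widetilde u^{n+1}$.} In every case this follows from the identity $\widetilde u^{n+1}=u^n-\tau\widetilde\mu^{n+1}$ and the triangle inequality. All constants depend only on $Q(u^0)$, $\alpha$, $\|V\|_{L^\infty}$, $\omega$ and the embedding constant.

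The delicate point is handling the nonlinearity in $H^{-1}$ via the $L^{p+1}$ normalization, and avoiding the degenerate factor $\tau$ in the coercivity for small $\tau$. If the $H^{-1}$ bound turns out to be awkward under Dirichlet conditions, I would instead use $L^{(p+1)/p}\hookrightarrow H^{-1}$ directly.
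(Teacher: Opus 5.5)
Your proof is correct, but the case split rests on a false premise. Under \eqref{eq:alphan-condition} you have $\alpha\ge\frac12$, so $\tau(1+\tau\alpha)\ge\alpha\tau^2\ge\frac{\tau^2}{2}$ for every $\tau>0$. (For $\tau<1$, even $\tau\ge\tau^2$ alone would suffice, without using $\alpha$.) Hence the left-hand side of your tested identity is at least $\frac12\|\tau\widetilde\mu^{n+1}\|_{H^1}^2$ uniformly in $\tau$, and nothing degenerates as $\tau\to0$.

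Your ``Case $\tau\ge1$'' therefore already covers all $\tau$, and it is exactly the paper's argument. The paper tests \eqref{eq:ellipse-equation-mu} with $\widetilde\mu^{n+1}$ and bounds $\langle\mu^n,\widetilde\mu^{n+1}\rangle\lesssim\|\widetilde\mu^{n+1}\|_{H^1}$ using the $H^1$ bound on $u^n$, H\"older, and $\|u^n\|_{L^{p+1}}=1$. It then uses $\alpha\ge\frac12$ to get $\frac{\tau}{2}\|\widetilde\mu^{n+1}\|_{H^1}^2\lesssim\|\widetilde\mu^{n+1}\|_{H^1}$, and finishes with $\widetilde u^{n+1}=u^n-\tau\widetilde\mu^{n+1}$. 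The coercivity constant there is $\min\{\alpha,\frac12\}$ rather than $\min\{1,\frac12\}$, though both equal $\frac12$ here.

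Your resolvent argument is nevertheless valid. Since $1+\tau\alpha\ge\frac{\tau}{2}$, the bound $\|\tau(1+\tau\alpha-\frac{\tau}{2}\Delta)^{-1}\|_{H^{-1}\to H^1}\le 2$ actually holds for all $\tau>0$, not just $\tau<1$. So that argument would prove the lemma on its own, in the reverse order: first $\widetilde u^{n+1}$, then $\tau\widetilde\mu^{n+1}$.

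The price is that it relies on diagonalizing the $H^1$ norm in the Fourier/sine eigenbasis of the box with periodic or Dirichlet conditions. The paper's one-line energy test uses only the weak form and the coercivity of the bilinear form. It is shorter and needs no spectral structure of the domain.
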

\begin{proof}
    By Lemma~\ref{lem:h1-control-by-Q} and the energy decay \eqref{eq:energy_decaying}, we see that $\|u^{n}\|_{H^{1}}^2\lesssim Q(u^n)\leq Q(u^0)$ for all $n\geq0$. To bound $\tau\|\widetilde{\mu}^{n+1}\|_{H^{1}}$, we take the $L^2$ inner product of \eqref{eq:ellipse-equation-mu} with $\widetilde{\mu}^{n+1}$, yielding that
	\begin{align*}
		\left( 1 + \tau\alpha \right) \|\widetilde{\mu}^{n+1}\|_{L^{2}}^{2} + \frac{\tau}{2}\|\nabla \widetilde{\mu}^{n+1}\|_{L^{2}}^{2} 
        &= \left\langle \mu^{n}, \widetilde{\mu}^{n+1} \right\rangle \\
		&= \big| \langle \mathcal{A}u^{n} - \widetilde{\lambda}(u^{n})|u^{n}|^{p-1}u^{n}, \widetilde{\mu}^{n+1}\rangle \big| \\
        &\lesssim \|u^{n}\|_{H^{1}}\|\widetilde{\mu}^{n+1}\|_{H^{1}}+Q(u^{n})\|u^{n}\|_{L^{p+1}}^p\|\widetilde{\mu}^{n+1}\|_{L^{p+1}} \\
        &\lesssim \|\widetilde{\mu}^{n+1}\|_{H^{1}}.
	\end{align*}
    Noting that $\alpha\geq 1/2$, we get $\tau\|\widetilde{\mu}^{n+1}\|_{H^1}\lesssim 1$. Furthermore, we can find
	\begin{align*}
		\|\widetilde{u}^{n+1}\|_{H^{1}}
        =\|u^{n}-\tau\widetilde{\mu}^{n+1}\|_{H^{1}}
		\le \|u^{n}\|_{H^{1}} + \tau\|\widetilde{\mu}^{n+1}\|_{H^{1}} 
        \lesssim 1,
	\end{align*}
	which completes the proof.
\end{proof}

\begin{lemma}\label{lem:tilde-u-nplus1-gap-1}
	Under Assumptions \ref{assump:potential-condition}\&\ref{assump:omega-conditions}, for the GFALM scheme \eqref{eq:GFALM-semi-disc.} with $\alpha$ satisfying \eqref{eq:alphan-condition}, the following uniform lower bound holds:
	\begin{align*}
		\|\widetilde{u}^{n+1}\|_{L^{p+1}} \gtrsim \frac{1}{1+\sqrt{Q(u^n)-Q(u^{n+1})}}\gtrsim 1, \quad \forall n\in\mathbb{N}.
	\end{align*}
\end{lemma}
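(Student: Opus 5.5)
We want a lower bound on $\|\widetilde u^{n+1}\|_{L^{p+1}}$. The plan is to combine two facts. The first is that $u^n$ lies on $\mathcal S_{p+1}$. The second is that the increment $\widetilde u^{n+1}-u^n=-\tau\widetilde\mu^{n+1}$ is controlled in $H^1$ through the energy decay of Lemma~\ref{lem:energy_decaying}.

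\textbf{Step 1 (triangle inequality).} By the triangle inequality in $L^{p+1}$ and the Sobolev embedding $H^1(\Omega)\hookrightarrow L^{p+1}(\Omega)$, valid under Assumption~\ref{assump:potential-condition},
\begin{align*}
1=\|u^n\|_{L^{p+1}}\le \|\widetilde u^{n+1}\|_{L^{p+1}}+\tau\|\widetilde\mu^{n+1}\|_{L^{p+1}}\le \|\widetilde u^{n+1}\|_{L^{p+1}}+C_S\,\tau\|\widetilde\mu^{n+1}\|_{H^1}.
\end{align*}

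\textbf{Step 2 (energy decay).} Lemma~\ref{lem:energy_decaying} gives
\begin{align*}
\tau\|\widetilde\mu^{n+1}\|_{H^1}\le \sqrt2\,\|\widetilde u^{n+1}\|_{L^{p+1}}\sqrt{Q(u^n)-Q(u^{n+1})}.
\end{align*}
Writing $a=\|\widetilde u^{n+1}\|_{L^{p+1}}$ and $\delta_n=Q(u^n)-Q(u^{n+1})\ge0$, the two steps combine into
\begin{align*}
1\le a\bigl(1+\sqrt2\,C_S\sqrt{\delta_n}\bigr),
\end{align*}
which is
\begin{align*}
a\ge \frac{1}{1+\sqrt2\,C_S\sqrt{\delta_n}}\gtrsim \frac{1}{1+\sqrt{\delta_n}}.
\end{align*}
This is the first inequality of the lemma.

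\textbf{Step 3 (uniform bound).} By the energy decay and $Q\ge0$ (Lemma~\ref{lem:h1-control-by-Q}), we have $\delta_n\le Q(u^n)\le Q(u^0)$. Hence
\begin{align*}
a\gtrsim \frac{1}{1+\sqrt{Q(u^0)}}\gtrsim 1.
\end{align*}
The implied constant depends only on the initial data and the embedding constant, not on $\tau$ or $n$.

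\textbf{Main subtlety.} Lemma~\ref{lem:energy_decaying} itself has $a$ in its denominator, so a naive use of it looks circular. The point is that multiplying the bound of Step~2 through by $a$ gives an inequality that is linear in $a$ and can be solved for $a$ directly, as done above. The case $a=0$ needs a separate remark. If $a=0$, then $\widetilde u^{n+1}=0$, so $\tau\widetilde\mu^{n+1}=u^n$. The identity in the proof of Lemma~\ref{lem:energy_decaying}, with $\widetilde u^{n+1}=0$, would then force $u^n=0$, contradicting $\|u^n\|_{L^{p+1}}=1$. So $a>0$ and the scheme is well defined.
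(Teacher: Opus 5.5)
Your proof is correct and follows the paper's argument: the triangle inequality in $L^{p+1}$ applied to $u^n=\widetilde u^{n+1}+\tau\widetilde\mu^{n+1}$, then the Sobolev embedding together with the energy decay \eqref{eq:energy_decaying} to get $\tau\|\widetilde\mu^{n+1}\|_{H^1}\lesssim\|\widetilde u^{n+1}\|_{L^{p+1}}\sqrt{Q(u^n)-Q(u^{n+1})}$, then solving the resulting linear inequality and bounding the energy gap by $Q(u^0)$ (the paper uses $Q(u^{n+1})\ge Q(u_g)$ where you use $Q\ge 0$, which makes no difference). Your extra check that $\widetilde u^{n+1}\neq 0$, via the identity in the proof of Lemma~\ref{lem:energy_decaying}, is valid and settles a well-definedness point the paper leaves implicit.
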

\begin{proof}
    By the triangle inequality,
    \begin{align}\label{eq:tilde-u-nplus1-gap-1-Step1}
        1=\left\|u^n\right\|_{L^{p+1}}
        =\left\|\widetilde{u}^{n+1}+\tau\widetilde{\mu}^{n+1}\right\|_{L^{p+1}}
        \leq \left\|\widetilde{u}^{n+1}\right\|_{L^{p+1}}+\tau\left\|\widetilde{\mu}^{n+1}\right\|_{L^{p+1}}.
    \end{align}
    The Sobolev embedding $H^1(\Omega)\hookrightarrow L^{p+1}(\Omega)$ and the energy decay \eqref{eq:energy_decaying} state that
    \begin{align}\label{eq:tilde-u-nplus1-gap-1-Step2}
        \tau\left\|\widetilde{\mu}^{n+1}\right\|_{L^{p+1}}
        \lesssim \tau\left\|\widetilde{\mu}^{n+1}\right\|_{H^1}
        \lesssim \left\|\widetilde{u}^{n+1}\right\|_{L^{p+1}}\sqrt{Q(u^n)-Q(u^{n+1})}.
    \end{align}
    Noting that $Q(u_g)\leq Q(u^{n+1})\leq Q(u^n)\leq Q(u^0)$, the combination of \eqref{eq:tilde-u-nplus1-gap-1-Step1} and \eqref{eq:tilde-u-nplus1-gap-1-Step2} completes the proof immediately.
\end{proof}

\begin{proof}[Proof of Theorem~\ref{thm:convergent-subsequences}]
By \eqref{eq:GFALM-semi-suba.}, we have 
\begin{align*}
	(1+\tau\alpha)\widetilde{u}^{n+1} -\frac{\tau}{2}\Delta\widetilde{u}^{n+1} = u^{n} +\tau \left(\alpha- V - \omega \right)u^{n} + \tau\widetilde{\lambda}(u^{n})|u^{n}|^{p-1}u^{n} =: g(u^{n}).
\end{align*}
Substituting \eqref{eq:GFALM-semi-subb.} into it gives
\begin{align}\label{eq:u{n+1}-elliptic-eq}
	(1+\tau\alpha) u^{n+1} -\frac{\tau}{2}\Delta u^{n+1} = \|\widetilde{u}^{n+1}\|_{L^{p+1}}^{-1}g(u^{n}).
\end{align}
Taking the $L^2$ inner product of it with $-\Delta u^{n+1}$ yields
\begin{align}
	(1+\tau\alpha) \|\nabla u^{n+1}\|_{L^{2}}^{2} + \frac{\tau}{2}\|\Delta u^{n+1}\|_{L^{2}}^{2} \leq \|\widetilde{u}^{n+1}\|_{L^{p+1}}^{-1}\|g(u^{n})\|_{L^{2}} \|\Delta u^{n+1}\|_{L^{2}}.\label{eq:H2-estinate-step1}
\end{align}
From the definition of $g(u^{n})$ and the Sobolev embedding $H^{1}(\Omega) \hookrightarrow L^{2p}(\Omega)$ (which holds under the assumption on $p$), we obtain
\begin{align}
	\|g(u^{n})\|_{L^{2}} \lesssim (1+\tau)\|u^{n}\|_{L^{2}} + \tau \|u^{n}\|_{L^{2p}}^{p} \lesssim (1+\tau)\|u^{n}\|_{L^{2}} + \tau \|u^{n}\|_{H^{1}}^{p}. \label{eq:H2-estinate-step2}
\end{align}
Inserting \eqref{eq:H2-estinate-step2} into \eqref{eq:H2-estinate-step1} and then employing the uniform bound of $\|\widetilde{u}^{n+1}\|_{L^{p+1}}^{-1}$ provided by Lemma~\ref{lem:tilde-u-nplus1-gap-1}, we get
$\frac{\tau}{2}\|\Delta u^{n+1}\|_{L^{2}}^{2} \lesssim \left[ (1+\tau)\|u^{n}\|_{L^{2}} + \tau \|u^{n}\|_{H^{1}}^{p} \right] \|\Delta u^{n+1}\|_{L^{2}}$,  which leads to
\begin{align*}
	\|\Delta u^{n+1}\|_{L^{2}} \lesssim (1+\tau^{-1})\|u^{n}\|_{L^{2}} + \|u^{n}\|_{H^{1}}^{p}.
\end{align*}
It means that for all $n\in \mathbb{N}^{+}$,  $u^{n}\in H^{2}(\Omega)$ and $\|u^{n}\|_{H^{2}} \leq C_{\tau}\|u^{n}\|_{H^{1}}$, where $C_{\tau}>0$ depends on $\tau$. The uniform $H^{1}$-bound of $\{ u^{n} \}_{n=0}^{\infty}$ in Lemma~\ref{lem:uniform-estimates} implies the uniform $H^{2}$-bound of $\{ u^{n} \}_{n=1}^{\infty}$ (for every fixed $\tau>0$). By the Banach-Alaoglu theorem, there exists a weakly convergent subsequence $\{u^{n_{j}}\}_{j=0}^{\infty}$ in $H^2(\Omega)$, and the compact embedding of $H^{2}(\Omega)\hookrightarrow  H^{1}(\Omega)$ implies the strong convergence of $\{u^{n_{j}}\}_{j=0}^{\infty}$ in the $H^{1}$-norm (see also \cite{chen2025second}). This establishes the first assertion of Theorem~\ref{thm:convergent-subsequences}.

Now we turn to the second assertion of Theorem~\ref{thm:convergent-subsequences}. Let $u^\star\in \mathcal{S}_{p+1}$ be an accumulation point of $\{u^{n}\}_{n=0}^{\infty}$ with $\{u^{n_{j}}\}_{j=0}^{\infty}$ a subsequence converging strongly to $u^\star$ in the $H^{1}$-norm. 
Denoting 
\begin{align*}
	a\left( u,v \right) = \int_{\Omega}{ \left[ \frac{1}{2}\nabla u \cdot \nabla \bar{v} + \left( V + \omega \right)u \, \bar{v} - \widetilde{\lambda}(u)|u|^{p-1}u \, \bar{v} \right] }\,\text{d}\mathbf{x},
\end{align*}
we can find that for all $v\in H^{1}(\Omega)$, 
\begin{align}
	a\left( u^{n_{j}},v \right) - a\left( u^{\star},v \right) 
    =&\ \int_{\Omega}{ \left[ \frac{1}{2}\nabla \left( u^{n_{j}} - u^{\star} \right) \cdot\nabla \overline{v} + \left( V + \omega \right)\left( u^{n_{j}} - u^{\star} \right) \, \overline{v} \right] }\mathrm{d}\mathbf{x} \nonumber \\
	&\ - \left( Q(u^{n_{j}})-Q(u^{\star})\right) \int_{\Omega}{ |u^{n_{j}}|^{p-1}u^{n_{j}}\,\overline{v} }\,\mathrm{d}\mathbf{x}  \nonumber \\
	&\ - Q(u^{\star})\int_{\Omega}{ \left(|u^{n_{j}}|^{p-1}u^{n_{j}}-|u^{\star}|^{p-1}u^{\star} \right)\,\overline{v} }\,\mathrm{d}\mathbf{x}.
    \label{eq:auj-austar}
\end{align}
It is easy to observe that
\begin{align*}
\left|Q(u^{n_{j}})-Q(u^{\star}) \right| 
&= \left|\operatorname{Re}\int_{\Omega}\left[\frac12\nabla (u^{n_{j}}+u^{\star})\cdot\nabla \overline{(u^{n_{j}}-u^{\star})}+ (V+\omega) (u^{n_{j}}+u^{\star})\overline{(u^{n_{j}}-u^{\star})} \right]\mathrm{d}\mathbf{x}\right| \\
&\lesssim \left(\|u^{n_{j}}\|_{H^1}+\|u^{\star}\|_{H^1}\right) \|u^{n_{j}}-u^{\star}\|_{H^1} \lesssim \|u^{n_{j}}-u^{\star}\|_{H^1}.
\end{align*}
Note that 
\begin{align*}
|u^{n_{j}}|^{p-1}u^{n_{j}}-|u^{\star}|^{p-1}u^{\star}
=\int_0^1\left(\frac{p+1}{2}|u_{\rho}|^{p-1}(u^{n_{j}}-u^{\star}) + \frac{p-1}{2}|u_{\rho}|^{p-3}u_{\rho}^2\,\overline{(u^{n_{j}}-u^{\star})}\right)\mathrm{d}\rho,
\end{align*}
where $u_{\rho}:=\rho u^{n_{j}}+(1-\rho)u^{\star}$, and 
$\|u_{\rho}\|_{L^{p+1}}\leq \rho\|u^{n_{j}}\|_{L^{p+1}}+(1-\rho)\|u^{\star}\|_{L^{p+1}}=1$, $\forall\,\rho\in[0,1]$. We can obtain 
\begin{align*}
\left| \int_{\Omega}{ \left(|u^{n_{j}}|^{p-1}u^{n_{j}}-|u^{\star}|^{p-1}u^{\star} \right)\,\overline{v} }\,\mathrm{d}\mathbf{x} \right|
&\leq p\max_{0\leq\rho\leq 1}\|u_{\rho}\|_{L^{p+1}}^{p-1}\left\|u^{n_{j}}-u^{\star}\right\|_{L^{p+1}}\|v\|_{L^{p+1}} \\
&\lesssim \|u^{n_{j}}-u^{\star}\|_{H^1}\|v\|_{H^{1}}.
\end{align*}
Substituting these estimates into \eqref{eq:auj-austar} yields
\begin{align*}
\big| a\left( u^{n_{j}},v \right) - a\left( u^{\star},v \right) \big|
    \lesssim &\  \|u^{n_{j}} - u^{\star}\|_{H^{1}} \|v\|_{H^{1}}
    +\big| Q(u^{n_{j}})-Q(u^{\star})\big| \|u^{n_{j}}\|_{L^{p+1}}^p \|v\|_{L^{p+1}} \\
 &\    + \left|\int_{\Omega}{ \left(|u^{n_{j}}|^{p-1}u^{n_{j}}-|u^{\star}|^{p-1}u^{\star} \right)\,\overline{v} }\,\mathrm{d}\mathbf{x}\right| \\
 \lesssim &\ \|u^{n_{j}} - u^{\star}\|_{H^{1}} \|v\|_{H^{1}}.
\end{align*}
The $H^1$-strong convergence of the subsequence $\{u^{n_{j}}\}_{j=0}^{\infty}$ implies that, 
for all $v\in H^{1}(\Omega)$, 
\begin{align*}
	\lim_{j\rightarrow\infty} a\left( u^{n_{j}},v \right) = a\left( u^{\star},v \right).
\end{align*}
On the other hand, noting that $\mu^{n}= \mathcal{A}u^{n} - \widetilde{\lambda}(u^{n})|u^{n}|^{p-1}u^{n}$, we find from \eqref{eq:ellipse-equation-mu} and the the energy decay \eqref{eq:energy_decaying} that for any $v\in H^1(\Omega)$, 
\begin{align*}
	\left\langle\mu^{n},v\right\rangle
    &=(1+\tau\alpha)\left\langle \widetilde{\mu}^{n+1},v\right\rangle+\frac{\tau}{2}\left\langle \nabla \widetilde{\mu}^{n+1},\nabla v\right\rangle 
     \leq \left(1+\alpha\tau\right)\|\widetilde{\mu}^{n+1}\|_{H^1}\|v\|_{H^1}\\
    &\lesssim \left(\alpha+\tau^{-1}\right)\|\widetilde{u}^{n+1}\|_{L^{p+1}}\sqrt{Q(u^{n}) - Q(u^{n+1})}\, \|v\|_{H^1}\xrightarrow{n\to\infty} 0.
\end{align*}
This means that $\left| a\left( u^{n_{j}}, v \right) \right| = \left| \left\langle \mu^{n_{j}}, v \right\rangle \right| \xrightarrow{j\to\infty} 0$, 
and so
\begin{align*}
	a\left( u^{\star}, v \right) = \int_{\Omega}{ \left[ \frac{1}{2}\nabla u^{\star} \cdot\nabla \bar{v} + \left( V + \omega \right)u^{\star} \, \bar{v} - \widetilde{\lambda}(u^{\star})|u^{\star}|^{p-1}u^{\star} \, \bar{v} \right] }\,\text{d}\mathbf{x} = 0, \quad \forall\, v\in H^{1}(\Omega),
\end{align*}
which confirms \eqref{eq:Euler-LagrangeEq-weakform}. Finally, since the energy sequence $\{Q(u^n)\}$ is non-increasing and bounded from below, it has a unique limit, i.e.,
\[ \widetilde{\lambda}(u^{\star})=Q(u^{\star})=\lim_{j\to\infty}Q(u^{n_j})=\lim_{n\to\infty}Q(u^n)=\inf_{n\geq0}Q(u^n). \]
\end{proof}

\begin{remark}
    When $d\geq 3$, for the case of Dirichlet boundary conditions with some regularity assumption on the boundary $\partial\Omega$, we can get rid of the additional assumption $p\leq d/(d-2)$ made in Theorem~\ref{thm:convergent-subsequences} by employing the $W^{2,q}$ estimate for the linear elliptic equation \eqref{eq:u{n+1}-elliptic-eq} (see, e.g., \cite[Sect.~9.6]{GT}):
    \begin{align}\label{eq:Lq-estimate}
        \left\|u^{n+1}\right\|_{W^{2,q}}\leq C(\tau)\left\|\widetilde{u}^{n+1}\right\|_{L^{p+1}}^{-1}\left\|g(u^n)\right\|_{L^q}, \quad q:=\frac{2d}{p(d-2)}\in \left(\frac{2d}{d+2},\frac{2d}{d-2}\right),
    \end{align}
    where $C(\tau)>0$ depends on $\tau$ but is independent of $n$. Actually, since $\left\|\widetilde{u}^{n+1}\right\|_{L^{p+1}}^{-1}\lesssim 1$ by Lemma~\ref{lem:tilde-u-nplus1-gap-1} and $\left\|g(u^n)\right\|_{L^q}\lesssim (1+\tau)\left\|u^n\right\|_{L^q}+\tau \left\|u^n\right\|_{L^{pq}}^p
    \lesssim (1+\tau)\left\|u^n\right\|_{H^1}+\tau \left\|u^n\right\|_{H^1}^p\lesssim 1+\tau$ by Lemma~\ref{lem:uniform-estimates} and continuous embeddings $H^1(\Omega)\hookrightarrow L^{q}(\Omega)$ and $H^1(\Omega)\hookrightarrow L^{pq}(\Omega)$, \eqref{eq:Lq-estimate} establishes the boundedness of sequence $\{u^n\}_{n=1}^{\infty}$ in $W^{2,q}(\Omega)$ for fixed $\tau>0$. Then, the strongly convergent subsequence in $H^1$ follows from the compact embedding $W^{2,q}(\Omega)\hookrightarrow H^1(\Omega)$.
\end{remark}

\section{Exponential convergence of GFALM} \label{sec. 3}

This section is devoted to rigorously establishing an exponential convergence rate of the semi-discrete GFALM scheme \eqref{eq:GFALM-semi-disc.}. For simplicity of notation, we will assume that functions within the discussion are all real-valued. Extensions to complex-valued cases could also be done in the same manner, which will be discussed in the end. 

\subsection{Geometry near GS}

To conduct a rigorous local convergence analysis of the semi-discrete GFALM scheme \eqref{eq:GFALM-semi-disc.}, we first characterize the local geometry structure of the constraint manifold $\mathcal{S}_{p+1}$ near $u_g$. Specifically,  we introduce local coordinates around $u_g$, impose a crucial non-degeneracy assumption, and establish a local equivalence between the energy error and the $H^1$ error. 

Define the constraint functional $G(u) :=  \frac{2}{p+1} \big( \|u\|_{L^{p+1}}^{p+1}-1 \big)$, where $G(u)=0$ is equivalent to $u\in \mathcal{S}_{p+1}$. It is easy to verify that $G(u)$ has first- and second-order G\^ateaux derivatives:
\begin{align*}
    &\mathcal{D}G(u)[v]=2\langle |u|^{p-1}u,v\rangle,\quad
    \mathcal{D}^2G(u)[w,v]=2p\langle |u|^{p-1}w,v\rangle,\quad \forall u,v,w\in L^{p+1}(\Omega).
\end{align*}
Moreover,  by virtue of the following lemma,  $G(u)$ is of class $C^2(L^{p+1}(\Omega),\mathbb{R})$ (see also \cite[Proposition 1.12]{willem1996minimax}).
\begin{lemma}[Theorem A.2 of \cite{willem1996minimax}] \label{lem:f2}
    On a bounded domain $\Omega$, for $1<p<\infty$, the mapping $u\mapsto|u|^{p-1}$ from $L^{p+1}(\Omega)$ to $L^{\frac{p+1}{p-1}}(\Omega)$ is continuous.
\end{lemma}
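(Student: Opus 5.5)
To prove Lemma~\ref{lem:f2}, I would use the standard continuity theory for Nemytskii (superposition) operators on a bounded domain, specialized to $f(s)=|s|^{p-1}$. Set $r:=\frac{p+1}{p-1}\in(1,\infty)$. The first step is to check that the map is well defined with the expected growth. For every $u\in L^{p+1}(\Omega)$,
\[
\int_\Omega \bigl(|u|^{p-1}\bigr)^{r}\,\mathrm{d}\mathbf{x}=\int_\Omega |u|^{p+1}\,\mathrm{d}\mathbf{x}<\infty ,
\]
so $|u|^{p-1}\in L^{r}(\Omega)$. Moreover $\||u|^{p-1}\|_{L^r}=\|u\|_{L^{p+1}}^{p-1}$, which gives the growth bound $|f(s)|\le |s|^{(p+1)/r}$ required by the Krasnosel'skii framework.

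For continuity, let $u_k\to u$ in $L^{p+1}(\Omega)$. I would argue through subsequences: it suffices to show that every subsequence has a further subsequence along which $|u_k|^{p-1}\to|u|^{p-1}$ in $L^r$. By the partial converse of dominated convergence (Riesz--Fischer), there is a subsequence with $u_k\to u$ a.e. and a dominating function $h\in L^{p+1}(\Omega)$ such that $|u_k|\le h$ a.e. for all $k$ in the subsequence. Since $s\mapsto|s|^{p-1}$ is continuous, $|u_k|^{p-1}\to|u|^{p-1}$ a.e. We also have the pointwise bound
\[
\bigl||u_k|^{p-1}-|u|^{p-1}\bigr|^{r}\le 2^{r}h^{(p-1)r}=2^{r}h^{p+1}\in L^1(\Omega).
\]
Dominated convergence then gives convergence in $L^r$ along the subsequence. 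Because the limit $|u|^{p-1}$ does not depend on the subsequence chosen, the whole sequence converges.

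The only step needing care is extracting the dominating function $h$. I would build it in the usual way: choose indices with $\|u_{k_{j+1}}-u_{k_j}\|_{L^{p+1}}<2^{-j}$ and set $h=|u_{k_1}|+\sum_j|u_{k_{j+1}}-u_{k_j}|$, which lies in $L^{p+1}$ by Minkowski's inequality.

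Two alternatives are available. One could avoid subsequences by proving an explicit modulus of continuity: for $1<p\le2$ use $\bigl||a|^{p-1}-|b|^{p-1}\bigr|\le|a-b|^{p-1}$, and for $p>2$ use the mean value theorem together with H\"older's inequality. The dominated-convergence route is uniform in $p$, so I would present that one. Boundedness of $\Omega$ plays no role beyond the integrability statements above.
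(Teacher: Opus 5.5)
Your argument is correct and is essentially the proof behind the cited result. The paper gives no proof of its own but quotes Willem's Theorem A.2, which proves continuity of Nemytskii operators by the same steps: an a.e.-convergent subsequence dominated by some $h\in L^{p+1}(\Omega)$, dominated convergence, then the subsequence principle. There, boundedness of $\Omega$ is needed only because of the constant in the general growth condition $|f(s)|\le c\,(1+|s|^{(p+1)/r})$. The one point you leave implicit, $|u|\le h$ a.e. in your pointwise bound, follows by passing to the a.e. limit.
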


Clearly, the tangent space at $u \in \mathcal{S}_{p+1}= \{u \in X : G(u)=0\}$ reads
\begin{align*}
	\mathcal{T}_{u}
	:= \big\{ \xi \in X :\; \mathcal{D}G(u)[\xi]
	= 2\left\langle |u|^{p-1}u, \xi \right\rangle = 0 \big\}.
\end{align*}
Assume that $\lambda_g$ is the corresponding Lagrange multiplier of GS $u_g$, i.e.,
\begin{align}
	\mathcal{A}u_{g} = \lambda_{g}\, |u_{g}|^{p-1}u_{g}.
	\label{eq:euler-lagrange-ug}
\end{align}
We now introduce a local coordinate near the GS $u_{g}$:
\begin{align*}
	\chi:&\ \mathbb{R}\times \mathcal{T}_{u_{g}}\to X,\quad (r,\xi)\mapsto (1+r)\,u_{g}+\xi. 
\end{align*}
Clearly, $\chi$ is a bijection onto the image, with the inverse 
\begin{align*}
	\chi^{-1}:\ u\in X\mapsto 
    \bigl(r(u),\xi(u)\bigr)
	=\Bigl(\langle |u_{g}|^{p-1}u_{g},u\rangle^{} - 1,\,
	u-\langle |u_{g}|^{p-1}u_{g},u\rangle^{}\,u_{g}\Bigr)\in\mathbb{R}\times \mathcal{T}_{u_{g}}.
\end{align*}
Denote the neighborhood region of $u_{g}$ as $U_{\delta}(u_{g}) := \left\{ u\in X \,:\, \|u - u_{g}\|_{H^{1}} \leq \delta \right\}$. The following lemma shows that the normalization $\|u\|_{L^{p+1}}=1$ can uniquely determine $r$ as a $C^2$ function of $\xi\in\mathcal{T}_{u_{g}}$. The proof is based on the implicit function theorem and some basic calculations given in Appendix~\ref{appdx:proof-lem:implicit-function-normalization}.
\begin{lemma}\label{lem:implicit-function-normalization}
	Under Assumption~\ref{assump:potential-condition}, there exists some $\delta>0$ such that for any $u\in U_{\delta}(u_{g})$ admitting the representation $u=\chi(r,\xi)$, the implicit relation $\|u\|_{L^{p+1}}=1$ yields a unique $C^2$ map
	\begin{align*}
		r:\ \mathcal{T}_{u_{g},\delta}\;\to\;\mathbb{R}_{\delta},\qquad \xi\mapsto r(\xi),
	\end{align*}
	where $\mathcal{T}_{u_{g},\delta} \subset \mathcal{T}_{u_{g}}$ and $\mathbb{R}_{\delta}\subset\mathbb{R}$ are neighborhoods of zero, satisfying $r(0)=0$ and
	\begin{align}
		|r(\xi)|\;\lesssim\; \|\xi\|_{L^{p+1}}^2 \lesssim \|\xi\|_{H^{1}}^{2}. \label{eq:estimate-r-disc.}
	\end{align}
\end{lemma}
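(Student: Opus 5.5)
We will apply the implicit function theorem to the map
\[
F:\ \mathbb{R}\times \mathcal{T}_{u_g}\to\mathbb{R},\qquad F(r,\xi):=G\bigl((1+r)u_g+\xi\bigr)=\tfrac{2}{p+1}\bigl(\|(1+r)u_g+\xi\|_{L^{p+1}}^{p+1}-1\bigr),
\]
where $\mathcal{T}_{u_g}$ carries the $L^{p+1}$ norm (or the $H^1$ norm, via the embedding $X\hookrightarrow L^{p+1}$). Since $\chi$ is affine and continuous, Lemma~\ref{lem:f2} shows that $G\in C^2(L^{p+1}(\Omega),\mathbb{R})$, so $F$ is $C^2$ in a neighborhood of $(0,0)$. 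At the base point $F(0,0)=G(u_g)=0$, because $u_g\in\mathcal{S}_{p+1}$. The partial derivative in $r$ is
\[
\partial_r F(0,0)=\mathcal{D}G(u_g)[u_g]=2\langle |u_g|^{p-1}u_g,u_g\rangle=2\|u_g\|_{L^{p+1}}^{p+1}=2\neq0.
\]
The implicit function theorem in Banach spaces therefore gives neighborhoods $\mathcal{T}_{u_g,\delta}$ and $\mathbb{R}_\delta$ of zero and a unique $C^2$ map $r(\xi)$ with $r(0)=0$ and $F(r(\xi),\xi)=0$. To obtain the statement for $u\in U_\delta(u_g)$, we note that $\chi^{-1}$ is continuous from $X$ to $\mathbb{R}\times\mathcal{T}_{u_g}$, since $|r(u)|+\|\xi(u)\|_{H^1}\lesssim\|u-u_g\|_{H^1}$ by H\"older's inequality and the embedding. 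Taking $\delta$ small enough places $(r(u),\xi(u))$ inside the IFT neighborhood, and uniqueness then identifies $r(u)=r(\xi(u))$.

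For the estimate \eqref{eq:estimate-r-disc.}, we first compute the derivative of $r$. Differentiating the identity $F(r(\xi),\xi)=0$ in the direction $\eta$ at $\xi=0$ gives
\[
2r'(0)[\eta]+\mathcal{D}G(u_g)[\eta]=0 .
\]
Since $\eta\in\mathcal{T}_{u_g}$, the second term vanishes, so $r'(0)=0$. A second-order Taylor expansion of the $C^2$ map $r$ would then give $|r(\xi)|\le\sup\|r''\|\,\|\xi\|^2$. This requires $r''$ to be bounded on a small ball, which follows from continuity of $r''$ after shrinking $\delta$.

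We will also give a more hands-on argument for the bound, since this is the step where uniform constants matter. Write $u_s=(1+sr)u_g+s\xi$ and expand $G((1+r)u_g+\xi)$ via Taylor's formula with integral remainder:
\[
0=2r+\int_0^1(1-s)\,2p\int_\Omega|u_s|^{p-1}(ru_g+\xi)^2\,\mathrm{d}\mathbf{x}\,\mathrm{d}s .
\]
Here the first-order term is $2r$ because $\langle|u_g|^{p-1}u_g,\xi\rangle=0$. By H\"older's inequality, the remainder is bounded by $C\,\|u_s\|_{L^{p+1}}^{p-1}\|ru_g+\xi\|_{L^{p+1}}^2\lesssim r^2+\|\xi\|_{L^{p+1}}^2$. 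Hence $|r|\le C(r^2+\|\xi\|_{L^{p+1}}^2)$, and once $|r|\le 1/(2C)$, which holds after shrinking $\mathbb{R}_\delta$, absorbing the $r^2$ term yields $|r|\lesssim\|\xi\|_{L^{p+1}}^2$. Finally, $\|\xi\|_{L^{p+1}}\lesssim\|\xi\|_{H^1}$ by the Sobolev embedding under Assumption~\ref{assump:potential-condition}.

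The main obstacle is regularity. For $1<p<2$ the integrand $|u|^{p-1}$ is merely H\"older continuous, so verifying that $G$ is genuinely $C^2$ on $L^{p+1}$, and not just twice G\^ateaux differentiable, needs care; this is exactly what Lemma~\ref{lem:f2} supplies. The continuity of $u\mapsto|u|^{p-1}$ into $L^{\frac{p+1}{p-1}}$ makes $u\mapsto\mathcal{D}^2G(u)$ continuous in operator norm, and this continuity is what justifies both the implicit function theorem and the Taylor expansion above.
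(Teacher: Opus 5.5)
Your proposal is correct. The existence part is the same as the paper's: you apply the implicit function theorem to $G\circ\chi$ at $(0,0)$ with $\partial_r=2$, and obtain $\mathcal{D}r(0)=0$ because $\mathcal{D}G(u_g)$ annihilates $\mathcal{T}_{u_g}$.

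For the quadratic bound, the paper works with the implicit map itself. It writes $\mathcal{D}r(\xi)[v]=-\langle|\chi|^{p-1}\chi,v\rangle/\langle|\chi|^{p-1}\chi,u_g\rangle$ and differentiates this quotient once more. It bounds each term by H\"older, using $\|\chi(r(\xi),\xi)\|_{L^{p+1}}=1$ and the fact that the denominator stays near $1$ for small $\delta$. This gives $|\mathcal{D}^2r(\xi)[v_2,v_1]|\lesssim\|v_1\|_{L^{p+1}}\|v_2\|_{L^{p+1}}$ uniformly on the neighborhood, and Taylor's formula with integral remainder is then applied to $r$. Your first argument, continuity of $r''$ at $0$, is a soft, non-quantitative version of this.

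Your second argument is genuinely different. You expand $G$ along the segment from $u_g$ to $u$; the first-order term collapses to $2r$ by tangency, the remainder is bounded by H\"older, and the $r^2$ contribution is absorbed. This route has several advantages:
\begin{itemize}
\item it never differentiates $r$, and uses only the bound $|\mathcal{D}^2G(u_s)[w,w]|\le 2p\|u_s\|_{L^{p+1}}^{p-1}\|w\|_{L^{p+1}}^2$ along one segment;
\item it applies to every $u\in\mathcal{S}_{p+1}$ near $u_g$ independently of the implicit function theorem, since a priori $|r(u)|=|\langle|u_g|^{p-1}u_g,u-u_g\rangle|\le\|u-u_g\|_{L^{p+1}}$;
\item it shows $r\le 0$, because the remainder is nonnegative.
\end{itemize}
It is in fact the same expansion the paper later uses in \eqref{eq:expanding-G-at-ug} for Proposition~\ref{prop:coercive-second-variation}. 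The paper's route gives, in exchange, explicit uniform control of $\mathcal{D}r$ and $\mathcal{D}^2 r$ on the whole neighborhood.

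One minor technical point: $\mathcal{T}_{u_g}$ with the $L^{p+1}$ norm is not complete, so "implicit function theorem in Banach spaces" is not quite the right citation. This is harmless, because the unknown $r$ is scalar and the theorem only needs a normed parameter space.
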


Under assumptions of Lemma~\ref{lem:implicit-function-normalization}, we also obtain that, when $\delta$ is small enough, for $u\in U_{\delta}(u_{g})\cap \mathcal{S}_{p+1}$, 
\begin{align*}
	\|u-u_{g}\|_{H^{1}} = \|r u_{g} + \xi\|_{H^{1}} \leq |r| \cdot \|u_{g}\|_{H^{1}} + \|\xi\|_{H^{1}} \lesssim \|\xi\|_{H^{1}},
\end{align*}
and, conversely, there exists $C>0$ such that
\begin{align*}
	\|u-u_{g}\|_{H^{1}} =& \|r u_{g} + \xi\|_{H^{1}} \geq -|r| \cdot \|u_{g}\|_{H^{1}} + \|\xi\|_{H^{1}} \geq -C\|\xi\|_{H^{1}}^{2} + \|\xi\|_{H^{1}} \\
	\geq& \left( 1 - C\|\xi\|_{H^{1}} \right) \|\xi\|_{H^{1}}.
\end{align*}
For $\delta$ sufficiently small, it follows that $\|\xi\|_{H^{1}} \lesssim \|u-u_{g}\|_{H^{1}}$. In other words, $u - u_{g}$ is also equivalent to $\xi$ in the discrete $H^{1}$-norm, i.e., 
\begin{align}
\|u-u_{g}\|_{H^{1}} \asymp \|\xi\|_{H^{1}},\quad \forall\, u\in U_{\delta}(u_{g})\cap \mathcal{S}_{p+1}.    \label{eq:u-ug-approx-xi}
\end{align}

For technique analysis reason, we need $u_{g}$ to be a non-degenerate minimizer of $Q$ on $\mathcal{S}_{p+1}$. Here we impose a standard non-degeneracy (coercivity) condition on the second-order variation of the Lagrangian 
\begin{align}
	L(u,\lambda) := Q(u) - \lambda\,G(u), \label{eq:lagrangian-Q-G1}
\end{align}
restricted to the tangent space. Define the linear operator $\mathcal{L}v := \mathcal{A}v - p\,\lambda_{g}|u_{g}|^{p-1} v, \  \forall\, v\in X.$

\begin{proposition}[Uniqueness]\label{prop:coercive-second-variation}
	Assume that there exists a constant $c>0$ such that
	\begin{align}
		\mathcal{D}_{u}^{2} L(u_{g},\lambda_{g})[\xi, \xi] \;\ge\; c\,\|\xi\|_{H^{1}}^{2},
		\qquad \forall\, \xi \in \mathcal{T}_{u_{g}}. \label{eq:coercivity-second-variation}
	\end{align}
	Then, for a sufficiently small $\delta>0$, there holds
	\begin{align}
		Q(u) - Q(u_{g}) \asymp \|u-u_{g}\|_{H^{1}}^{2}, \quad \forall\, u \in U_{\delta}(u_{g}) \cap \mathcal{S}_{p+1}, \label{eq:energy-gap-quadratic}
	\end{align}
    so that the constrained minimization problem \eqref{eq:ug-definition} admits $u_{g}$ as the unique minimizer in $U_{\delta}(u_{g})\cap \mathcal{S}_{p+1}$.
\end{proposition}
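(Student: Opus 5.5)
The plan is a second-order Taylor expansion of the Lagrangian $L(\cdot,\lambda_g)$ from \eqref{eq:lagrangian-Q-G1} around $u_g$, combined with the local coordinates of Lemma~\ref{lem:implicit-function-normalization}.

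\emph{Reduction to the Lagrangian.} For $u\in U_\delta(u_g)\cap\mathcal{S}_{p+1}$ we have $G(u)=G(u_g)=0$. Hence
\[
Q(u)-Q(u_g)=L(u,\lambda_g)-L(u_g,\lambda_g).
\]
By \eqref{eq:euler-lagrange-ug}, $\mathcal{D}_uL(u_g,\lambda_g)[v]=2\langle\mathcal{A}u_g-\lambda_g|u_g|^{p-1}u_g,v\rangle=0$ for all $v$.

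\emph{Taylor expansion.} Write $e=u-u_g$. Since $Q$ is quadratic, $Q(u)-Q(u_g)=2\langle\mathcal{A}u_g,e\rangle+\langle\mathcal{A}e,e\rangle$. For $G$, use the integral form of Taylor's theorem:
\[
G(u)-G(u_g)=\mathcal{D}G(u_g)[e]+\int_0^1(1-s)\,\mathcal{D}^2G(u_g+se)[e,e]\,\mathrm{d}s.
\]
This gives
\[
Q(u)-Q(u_g)=\langle\mathcal{L}e,e\rangle-2p\lambda_g\int_0^1(1-s)\bigl\langle(|u_g+se|^{p-1}-|u_g|^{p-1})e,e\bigr\rangle\mathrm{d}s,
\]
where $\mathcal{D}^2_uL(u_g,\lambda_g)[e,e]=2\langle\mathcal{L}e,e\rangle$. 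By H\"older, the remainder is bounded by $\sup_s\||u_g+se|^{p-1}-|u_g|^{p-1}\|_{L^{\frac{p+1}{p-1}}}\|e\|_{L^{p+1}}^2$. Lemma~\ref{lem:f2} and the embedding $H^1\hookrightarrow L^{p+1}$ make the sup factor tend to $0$ as $\delta\to0$. So the remainder is $o(1)\|e\|_{H^1}^2$. (For $p\ge2$ one even gets an $O(\|e\|^3)$ bound, but continuity suffices.)

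\emph{Upper bound.} Clearly $|\langle\mathcal{L}e,e\rangle|\lesssim\|e\|_{H^1}^2$, so $Q(u)-Q(u_g)\lesssim\|e\|_{H^1}^2$.

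\emph{Lower bound.} This is the main step, because the coercivity \eqref{eq:coercivity-second-variation} only holds on $\mathcal{T}_{u_g}$ while $e$ is not tangent. Decompose $e=ru_g+\xi$ with $\xi\in\mathcal{T}_{u_g}$, via $\chi^{-1}$. Lemma~\ref{lem:implicit-function-normalization} gives $|r|\lesssim\|\xi\|_{H^1}^2$, and \eqref{eq:u-ug-approx-xi} gives $\|\xi\|_{H^1}\asymp\|e\|_{H^1}$. Expand
\[
\langle\mathcal{L}e,e\rangle=\langle\mathcal{L}\xi,\xi\rangle+2r\langle\mathcal{L}u_g,\xi\rangle+r^2\langle\mathcal{L}u_g,u_g\rangle.
\]
The cross and $r^2$ terms are bounded by $C(|r|\|\xi\|_{H^1}+r^2)\lesssim\|\xi\|_{H^1}^3$. (One can check $\mathcal{L}u_g=(1-p)\lambda_g|u_g|^{p-1}u_g$, so the cross term actually vanishes on $\mathcal{T}_{u_g}$, but the crude bound suffices.) By assumption, $\langle\mathcal{L}\xi,\xi\rangle\ge\frac c2\|\xi\|_{H^1}^2$. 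Therefore
\[
Q(u)-Q(u_g)\ge\tfrac c2\|\xi\|_{H^1}^2-C\|\xi\|_{H^1}^3-o(1)\|e\|_{H^1}^2\gtrsim\|e\|_{H^1}^2
\]
for $\delta$ small.

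\emph{Uniqueness.} Uniqueness of the minimizer in $U_\delta(u_g)\cap\mathcal{S}_{p+1}$ follows at once: $Q(u)=Q(u_g)$ forces $\|u-u_g\|_{H^1}=0$.
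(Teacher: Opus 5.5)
Your proof is correct and follows essentially the same route as the paper. You reduce $Q(u)-Q(u_g)$ to $\langle\mathcal{L}e,e\rangle+o(\|e\|_{H^1}^2)$ by a second-order expansion with integral remainder (you phrase it through the Lagrangian, while the paper expands $Q$ and $G$ separately and substitutes), control the remainder via continuity of $u\mapsto|u|^{p-1}$, and then transfer coercivity from $\mathcal{T}_{u_g}$ to $e=ru_g+\xi$ using $|r|\lesssim\|\xi\|_{H^1}^2$ and $\|\xi\|_{H^1}\asymp\|e\|_{H^1}$. The only cosmetic difference is that you bound the cross term $2r\langle\mathcal{L}u_g,\xi\rangle$ crudely by $O(\|\xi\|_{H^1}^3)$, whereas the paper uses that it vanishes exactly; either suffices.
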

\begin{proof}
	It is direct to see that 
	\begin{align}
		Q(u) - Q(u_{g})
		=&\, \left\langle \mathcal{A}u, u \right\rangle - \left\langle \mathcal{A}u_{g}, u_{g} \right\rangle
		= \left\langle \mathcal{A}\left( u + u_{g} \right), \left( u - u_{g} \right) \right\rangle \nonumber \\
		=&\, \left\langle \mathcal{A}\left( u - u_{g} \right), u - u_{g} \right\rangle + 2\left\langle \mathcal{A}u_{g}, u - u_{g} \right\rangle \nonumber \\
		=&\, \left\langle \mathcal{A}\left( u - u_{g} \right), u - u_{g} \right\rangle + 2\lambda_{g}\left\langle |u_{g}|^{p-1}u_{g}, u - u_{g} \right\rangle. \label{eq:gap-Qu-Qu-star}
	\end{align}
    By expanding the constraint functional, we have
	\begin{align}
		G(u) &=G(u_g)+\mathcal{D}G(u_g)[u-u_g]+\int_{0}^{1}{ (1-\rho)\mathcal{D}^2G(u_{g}+\rho(u - u_{g}))[u-u_g,u-u_g] }\, \mathrm{d}\rho \nonumber \\
		&=G(u_g)+ 2\left\langle |u_{g}|^{p-1}u_{g}, u - u_{g} \right\rangle + p\left\langle |u_{g}|^{p-1}\left( u - u_{g} \right), u - u_{g} \right\rangle \nonumber \\
        &\quad\; + 2p\int_{0}^{1}{ (1-\rho) \left\langle \left( \left|u_{g}+\rho(u - u_{g})\right|^{p-1} - |u_{g}|^{p-1} \right)(u - u_{g}), u - u_{g}  \right\rangle }\, \mathrm{d}\rho. \label{eq:expanding-G-at-ug}
	\end{align}
    By H\"older's inequality, and combining with Lemma~\ref{lem:f2}, we have
    \begin{align}
    	&\left|  \int_{0}^{1}{ (1-\rho) \left\langle \left( \left|u_{g}+\rho(u - u_{g})\right|^{p-1} - |u_{g}|^{p-1} \right)(u - u_{g}), u - u_{g}  \right\rangle }\, \mathrm{d}\rho \right| \nonumber \\
        \lesssim&\, \left\| \left|u_{g}+\rho(u - u_{g})\right|^{p-1} - |u_{g}|^{p-1} \right\|_{L^{\frac{p+1}{p-1}}} \left\| u-u_{g} \right\|_{L^{p+1}}^{2} = o\!\left( \left\| u-u_{g} \right\|_{L^{p+1}}^{2} \right). \label{eq:high-order-esti}
    \end{align}
    Since $G(u)=G(u_g)=0$, it follows from \eqref{eq:expanding-G-at-ug} and \eqref{eq:high-order-esti} that
	\begin{align*}
		\left\langle |u_{g}|^{p-1}u_{g}, u - u_{g} \right\rangle
		= -\frac{p}{2}\left\langle |u_{g}|^{p-1}\left( u - u_{g} \right), u - u_{g} \right\rangle
		+ o\!\left( \|u - u_{g}\|_{L^{p+1}}^{2} \right).
	\end{align*}
    Substituting this into \eqref{eq:gap-Qu-Qu-star} yields
	\begin{align}
		Q(u) - Q(u_{g})
		=&\, \left\langle \mathcal{A}\left( u - u_{g} \right), u - u_{g} \right\rangle
		- p\,\lambda_{g}\left\langle |u_{g}|^{p-1}\left( u - u_{g} \right), u - u_{g} \right\rangle
		+ o\!\left( \|u - u_{g}\|_{L^{p+1}}^{2} \right) \nonumber\\
		=&\, \left\langle \mathcal{L}\left( u - u_{g} \right), u - u_{g} \right\rangle
		+ o\!\left( \|u - u_{g}\|_{H^{1}}^{2} \right). \label{eq:Qu-Qug-Step1}
	\end{align}

    Note that the first- and second-order G\^ateaux derivatives of the Lagrangian \eqref{eq:lagrangian-Q-G1} read
	\begin{align*}
		&\mathcal{D}_{u}L(u,\lambda)[v_{1}]
		= \mathcal{D}Q(u)[v_{1}] - \lambda\,\mathcal{D}G(u)[v_{1}]
		= 2\,\left\langle \mathcal{A}u - \lambda |u|^{p-1}u, v_{1} \right\rangle, \\[0.25em]
		&\mathcal{D}_{u}^{2}L(u_{g},\lambda_{g})[v_{2}, v_{1}]
		= 2\,\left\langle \mathcal{A}v_{2} - p\,\lambda_{g}|u_{g}|^{p-1} v_{2}, v_{1} \right\rangle
		= 2\left\langle \mathcal{L}v_{2}, v_{1} \right\rangle, \quad \forall v_1,v_2\in X.
	\end{align*}
    By \eqref{eq:coercivity-second-variation}, we have 
    \begin{align}\label{eq:linearized-operator-coercivity}
        \left\langle \mathcal{L}\xi, \xi \right\rangle \gtrsim \|\xi\|_{H^{1}}^{2},\quad \forall\, \xi\in \mathcal{T}_{u_{g}}.
    \end{align}
    On the other hand,  for all $\xi\in \mathcal{T}_{u_{g}}$,
    \begin{align*}
		\left\langle \mathcal{L}\xi, \xi \right\rangle 
        =\left\langle \mathcal{A}\xi, \xi \right\rangle -p\lambda_g\left\langle  |u_{g}|^{p-1}\xi, \xi \right\rangle 
        \lesssim \|\xi\|_{H^{1}}^{2}+p\lambda_g\|u_g\|_{L^{p+1}}^p\|\xi\|_{L^{p+1}}^2
        \lesssim \|\xi\|_{H^{1}}^{2}. 
	\end{align*}
    Consequently,
    \begin{align}
        \langle\mathcal{L}\,\xi,\xi\rangle \asymp \|\xi\|_{H^{1}}^{2}, \quad \forall\, \xi\in \mathcal{T}_{u_{g}}. \label{eq:Qu-Qug-Step2}
    \end{align}
    Moreover, for all $u\in U_{\delta}(u_{g}) \cap \mathcal{S}_{p+1}$ with $\delta$ chosen as in Lemma~\ref{lem:implicit-function-normalization}, we can write
	\begin{align*}
		u-u_{g} = r(\xi)u_{g} + \xi \qquad\mbox{with}\quad r(\xi)=\mathcal{O}\!\left(\|\xi\|_{H^1}^{2}\right), \quad \xi\in\mathcal{T}_{u_{g}},
	\end{align*}
    and deduce that 
	\begin{align}
		\langle\mathcal{L}\,( u-u_{g} ),u-u_{g}\rangle
		=&\, \langle\mathcal{L}\,\xi,\xi\rangle  + 2r\,\langle\mathcal{L}\,u_{g},\xi\rangle  + r^{2}\langle\mathcal{L}\,u_{g},u_{g}\rangle
        = \langle\mathcal{L}\,\xi,\xi\rangle + \mathcal{O}\!\left( \|\xi\|_{H^{1}}^{4} \right), \label{eq:Qu-Qug-Step3}
	\end{align}
    where $\langle \mathcal{L}\,u_{g}, \xi \rangle
		= \big\langle (\mathcal{A}u_{g} - p\,\lambda_{g}|u_{g}|^{p-1}u_{g}),\xi\big\rangle
		= (1-p)\,\lambda_{g}\,\langle |u_{g}|^{p-1}u_{g},\xi\rangle
		= 0$. 
    Substituting \eqref{eq:Qu-Qug-Step2}-\eqref{eq:Qu-Qug-Step3} into \eqref{eq:Qu-Qug-Step1}, and applying $\|u-u_{g}\|_{H^{1}}^{2}\asymp\|\xi\|_{H^{1}}^{2}$ stated in \eqref{eq:u-ug-approx-xi}, we can conclude that for all $u\in U_{\delta}(u_{g}) \cap \mathcal{S}_{p+1}$ with $\delta>0$ small enough,
	\begin{align*}
		Q(u) - Q(u_{g})
		=\langle\mathcal{L}\,\xi,\xi\rangle + o\!\left( \|\xi\|_{H^{1}}^{2} \right)
        \asymp \|\xi\|_{H^{1}}^{2}
        \asymp \|u-u_{g}\|_{H^{1}}^{2},
	\end{align*}
    which completes the proof of Proposition~\ref{prop:coercive-second-variation}.
\end{proof}

\subsection{Exponential convergence rate}

Now we are ready to present the convergence rate result for the semi-discrete GFALM scheme \eqref{eq:GFALM-semi-disc.}.

\begin{theorem}[Main Result]\label{thm:exponential-convergence-ground-state}
    Suppose that Assumptions~\ref{assump:potential-condition}\&\ref{assump:omega-conditions} hold and that the stabilization parameter $\alpha$ satisfies \eqref{eq:alphan-condition}. Let $u_{g}$ be the GS defined by \eqref{eq:ug-definition} and suppose that it satisfies the non-degeneracy condition in Proposition~\ref{prop:coercive-second-variation}. Then, there exists a constant $\delta_{0}>0$ such that for any initial data $u^{0} \in U_{\delta_{0}}(u_{g}) \cap \mathcal{S}_{p+1}$ and any $\tau>0$, the GFALM scheme \eqref{eq:GFALM-semi-disc.} converges to the GS at the rate:
    \begin{align}\label{eq:exponential-convergence-to-ground-state}
        \|u^{n} - u_{g}\|_{H^{1}} \le C\,\mathrm{e}^{-a n \tau/(1+\tau)}, \qquad \forall\, n \in \mathbb{N},
    \end{align}
    where $C, a > 0$ are constants independent of $n$ and $\tau$.
\end{theorem}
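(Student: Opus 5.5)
The plan is a Łojasiewicz-type argument built on the quadratic growth of Proposition~\ref{prop:coercive-second-variation} and the energy decay of Lemma~\ref{lem:energy_decaying}. Set $e_n := Q(u^n)-Q(u_g)\ge 0$. The goal is the one-step contraction
\[
e_{n+1}\le \Bigl(1-\frac{c\,\tau}{1+\tau}\Bigr)e_n
\]
for all $n$, provided the iterates stay in $U_{\delta}(u_g)$. Then $e_n\lesssim e^{-an\tau/(1+\tau)}e_0$, and \eqref{eq:energy-gap-quadratic} converts this into \eqref{eq:exponential-convergence-to-ground-state}, after halving the exponent and absorbing constants.

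\emph{Step 1 (Łojasiewicz inequality).} For $u\in U_\delta(u_g)\cap\mathcal S_{p+1}$, let $\mu(u):=\mathcal A u-Q(u)|u|^{p-1}u$. I aim to show
\[
Q(u)-Q(u_g)\lesssim \|\mu(u)\|_{H^{-1}}^2 .
\]
Write $u-u_g=r u_g+\xi$ as in Lemma~\ref{lem:implicit-function-normalization} and test $\mu(u)$ against $\xi\in\mathcal T_{u_g}$. Using \eqref{eq:euler-lagrange-ug}, I expand
\[
\mathcal A u-Q(u)|u|^{p-1}u=\mathcal L(u-u_g)-(Q(u)-\lambda_g)|u_g|^{p-1}u_g+\text{h.o.t.}
\]
The middle term is orthogonal to $\xi$ by the definition of $\mathcal T_{u_g}$, and $Q(u)-\lambda_g=O(\|\xi\|_{H^1}^2)$. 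The remaining remainders are $o(\|\xi\|_{H^1}^2)$ by Lemma~\ref{lem:f2}, and $r=O(\|\xi\|^2)$. Therefore
\[
\langle\mu(u),\xi\rangle=\langle\mathcal L\xi,\xi\rangle+o(\|\xi\|_{H^1}^2)\gtrsim\|\xi\|_{H^1}^2,
\]
by \eqref{eq:Qu-Qug-Step2}. This gives $\|\xi\|_{H^1}\lesssim\|\mu(u)\|_{H^{-1}}$, and with \eqref{eq:energy-gap-quadratic} and \eqref{eq:u-ug-approx-xi} the claimed inequality follows.

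\emph{Step 2 (linking $\mu^n$ to the decay).} By the resolvent equation \eqref{eq:ellipse-equation-mu}, $\mu^n=((1+\tau\alpha)-\tfrac\tau2\Delta)\widetilde\mu^{n+1}$, so
\[
\|\mu^n\|_{H^{-1}}\lesssim (1+\tau)\|\widetilde\mu^{n+1}\|_{H^1}.
\]
Lemma~\ref{lem:energy_decaying} gives two bounds on the decrease. Keeping the $\tau\cdot 4\|\widetilde\mu^{n+1}\|_{L^2}^2$ term gives one; the $\tau^2\|\widetilde\mu^{n+1}\|_{H^1}^2$ term gives the other. To combine them I would use a cleaner route: test \eqref{eq:ellipse-equation-mu} with $\widetilde\mu^{n+1}$ to get
\[
\langle\mu^n,\widetilde\mu^{n+1}\rangle\asymp \|\widetilde\mu\|_{L^2}^2+\tau\|\widetilde\mu\|_{H^1}^2 \gtrsim \min(1,\tau)\|\widetilde\mu\|_{H^1}^2 .
\]
From this, $\|\mu^n\|_{H^{-1}}^2\lesssim (1+\tau)^2\|\widetilde\mu\|_{H^1}^2$. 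The decrease from Lemma~\ref{lem:energy_decaying} is at least $\gtrsim(\tau^2+\tau)\|\widetilde\mu\|_{H^1}^2$, where $\|\widetilde u^{n+1}\|_{L^{p+1}}\lesssim1$ follows from Lemma~\ref{lem:uniform-estimates}. Since $(\tau+\tau^2)/(1+\tau)^2=\tau/(1+\tau)$, I obtain
\[
e_n-e_{n+1}\gtrsim\frac{\tau}{1+\tau}\|\mu^n\|_{H^{-1}}^2\gtrsim\frac{\tau}{1+\tau}e_n ,
\]
which is exactly the desired factor. The delicate point is the $L^2$ part: the energy estimate gives $4\tau\|\widetilde\mu\|_{L^2}^2$ but only $\tau^2\|\nabla\widetilde\mu\|^2$. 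For small $\tau$ I would instead bound $\|\mu^n\|_{H^{-1}}$ through
\[
\langle\mu^n,v\rangle=(1+\tau\alpha)\langle\widetilde\mu,v\rangle+\tfrac\tau2\langle\nabla\widetilde\mu,\nabla v\rangle,
\]
so that $\|\mu^n\|_{H^{-1}}\lesssim(1+\tau)\|\widetilde\mu\|_{L^2}+\tau\|\nabla\widetilde\mu\|_{L^2}$. Both pieces are then controlled by the decrease divided by $\tau/(1+\tau)$, since
\[
(1+\tau)^2\|\widetilde\mu\|_{L^2}^2\le\frac{1+\tau}{\tau}\cdot\tau(\tau+4)\|\widetilde\mu\|_{L^2}^2
\quad\text{and}\quad
\tau^2\|\nabla\widetilde\mu\|^2\le\frac{1+\tau}{\tau}\,\tau^2\|\nabla\widetilde\mu\|^2 .
\]

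\emph{Step 3 (Lyapunov stability; main obstacle).} Steps 1--2 require every iterate to remain in $U_\delta(u_g)$, and this is what I expect to be hardest. The argument I would try is induction. Suppose $u^0,\dots,u^n\in U_\delta$, so that $e_k$ decays geometrically up to $n$. Then
\[
\|u^{n+1}-u^n\|_{H^1}\lesssim \tau\|\widetilde\mu^{n+1}\|_{H^1}+\bigl|1-\|\widetilde u^{n+1}\|_{L^{p+1}}\bigr|\lesssim\sqrt{e_n-e_{n+1}} .
\]
The first term is bounded by the energy decay. For the second, I would use \eqref{eq:tilde-u-nplus1-gap-1-Step1} and Lemma~\ref{lem:tilde-u-nplus1-gap-1}, after first confirming that a bound by $\tau\|\widetilde\mu\|_{H^1}$ does hold on both sides. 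To sum the steps I would use the standard Łojasiewicz trick, concavity of the square root:
\[
\sqrt{e_k}-\sqrt{e_{k+1}}\ge\frac{e_k-e_{k+1}}{2\sqrt{e_k}}\gtrsim\frac{\sqrt{\tau/(1+\tau)}}{1}\sqrt{e_k-e_{k+1}} .
\]
The last inequality uses Step 2 in the form $\sqrt{e_k}\lesssim\sqrt{(1+\tau)/\tau}\sqrt{e_k-e_{k+1}}$.

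The hard point is that this telescoping bound carries a factor $\sqrt{(1+\tau)/\tau}$, which blows up as $\tau\to0$. The fix I would try is to avoid relying on the crude bound $\|u^{k+1}-u^k\|\lesssim\tau\|\widetilde\mu\|_{H^1}$ directly. Instead I would use $\tau\|\widetilde\mu\|_{H^1}^2\lesssim(e_k-e_{k+1})/(\tau+\dots)$, so that
\[
\|u^{k+1}-u^k\|\lesssim\sqrt{\tau(1+\tau)}\,\|\mu^k\|_{H^{-1}}\cdot\sqrt{\dots},
\]
and pair this with $\|\mu^k\|_{H^{-1}}\lesssim\dots$ in the form $\sum\|u^{k+1}-u^k\|\lesssim\sqrt{e_0}\lesssim\delta_0$, uniformly in $\tau$. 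If that pairing fails, I fall back on the simpler bound
\[
\|u^{n+1}-u_g\|^2\lesssim e_{n+1}\le e_0\lesssim\delta_0^2,
\]
which follows from \eqref{eq:energy-gap-quadratic}. This bound is valid provided $u^{n+1}$ lies in some larger ball $U_{\delta}$ where the quadratic growth still holds. That in turn holds if a single step moves the iterate by at most $\lesssim\sqrt{e_n-e_{n+1}}\lesssim\delta_0$. Choosing $\delta_0\ll\delta$ then closes the induction and yields the theorem.
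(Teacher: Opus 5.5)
Your proposal is correct and follows essentially the paper's route: the \L{}ojasiewicz inequality comes from coercivity of $\mathcal{L}$ (you test $\mu(u)$ against the tangential part $\xi$ where the paper tests against $u-u_g$, an immaterial difference), and the resolvent bound $\|\mu^n\|_{H^{-1}}\lesssim(1+\tau)\|\widetilde\mu^{n+1}\|_{L^2}+\tau\|\nabla\widetilde\mu^{n+1}\|_{L^2}$ combined with Lemma~\ref{lem:energy_decaying} gives the factor $\tau/(1+\tau)$. Your Step~3 ``fallback'' (uniform one-step bound $\|u^{n+1}-u^n\|_{H^1}\lesssim\sqrt{Q(u^0)-Q(u_g)}\lesssim\delta_0$, two-sided quadratic growth, and monotonicity of $Q$) is exactly the paper's Lemma~\ref{lem:local-stability-ground-state} and is all that is needed, so drop the telescoping detour and your initial false claim that the energy decrease dominates $(\tau+\tau^2)\|\widetilde\mu^{n+1}\|_{H^1}^2$, which you already corrected yourself.
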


We then turn to the proof of Theorem \ref{thm:exponential-convergence-ground-state}. As a preparation, we give some estimations derived from the energy decaying property.

\begin{lemma}\label{lem:esti-by-energe-gap}
    Assume that Assumptions~\ref{assump:potential-condition}\&\ref{assump:omega-conditions} hold, and let $\alpha$ satisfy \eqref{eq:alphan-condition}. For the solution of \eqref{eq:GFALM-semi-disc.}, we have
    \begin{align*}
    	\tau\left\| \widetilde{\mu}^{n+1} \right\|_{H^{1}} \lesssim  \sqrt{ Q(u^{n}) - Q(u^{n+1}) }, \quad \left\| u^{n+1} - u^{n} \right\|_{H^{1}} \lesssim \sqrt{ Q(u^{n}) - Q(u^{n+1}) },\quad \forall n\in\mathbb{N}.
    \end{align*}
\end{lemma}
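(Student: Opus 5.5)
The first bound follows directly from the energy-decay estimate \eqref{eq:energy_decaying} in Lemma~\ref{lem:energy_decaying}. The second bound requires controlling the normalization factor in \eqref{eq:GFALM-semi-subb.}.

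\textbf{First estimate.} Rearranging the second inequality in \eqref{eq:energy_decaying} gives
\[
\tau^2\|\widetilde{\mu}^{n+1}\|_{H^1}^2\le 2\|\widetilde{u}^{n+1}\|_{L^{p+1}}^2\bigl(Q(u^n)-Q(u^{n+1})\bigr).
\]
Lemma~\ref{lem:uniform-estimates} and the embedding $H^1(\Omega)\hookrightarrow L^{p+1}(\Omega)$ give $\|\widetilde{u}^{n+1}\|_{L^{p+1}}\lesssim\|\widetilde{u}^{n+1}\|_{H^1}\lesssim 1$. This bound is uniform in $\tau$, because Lemma~\ref{lem:uniform-estimates} only uses $Q(u^0)$ and $\alpha\ge 1/2$. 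Taking square roots yields $\tau\|\widetilde{\mu}^{n+1}\|_{H^1}\lesssim\sqrt{Q(u^n)-Q(u^{n+1})}$.

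\textbf{Second estimate.} Write $s:=\|\widetilde{u}^{n+1}\|_{L^{p+1}}$ and split
\[
u^{n+1}-u^n=\frac{\widetilde{u}^{n+1}}{s}-\widetilde{u}^{n+1}+\bigl(\widetilde{u}^{n+1}-u^n\bigr)=\frac{1-s}{s}\,\widetilde{u}^{n+1}-\tau\widetilde{\mu}^{n+1}.
\]
The last term is already controlled by the first estimate. For the first term:
\begin{itemize}
\item Lemma~\ref{lem:tilde-u-nplus1-gap-1} gives $s^{-1}\lesssim 1$.
\item Lemma~\ref{lem:uniform-estimates} gives $\|\widetilde{u}^{n+1}\|_{H^1}\lesssim 1$.
\item Since $\|u^n\|_{L^{p+1}}=1$, the reverse triangle inequality gives $|1-s|=\bigl|\|u^n\|_{L^{p+1}}-\|\widetilde{u}^{n+1}\|_{L^{p+1}}\bigr|\le\tau\|\widetilde{\mu}^{n+1}\|_{L^{p+1}}\lesssim\tau\|\widetilde{\mu}^{n+1}\|_{H^1}$.
\end{itemize}
Hence $\|u^{n+1}-u^n\|_{H^1}\lesssim(1+s^{-1}\|\widetilde{u}^{n+1}\|_{H^1})\,\tau\|\widetilde{\mu}^{n+1}\|_{H^1}\lesssim\sqrt{Q(u^n)-Q(u^{n+1})}$.

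The only point needing care is that every implicit constant is independent of $\tau$ and $n$. This must be checked in Lemmas~\ref{lem:uniform-estimates} and \ref{lem:tilde-u-nplus1-gap-1}. In Lemma~\ref{lem:uniform-estimates}, the bound on $\tau\|\widetilde{\mu}^{n+1}\|_{H^1}$ comes from testing \eqref{eq:ellipse-equation-mu} with $\widetilde{\mu}^{n+1}$. Since $\min\{1+\tau\alpha,\tau/2\}$ degenerates as $\tau\to0$, this step alone does not give a $\tau$-uniform bound. I would therefore rely on the energy-decay route instead: \eqref{eq:energy_decaying} together with $s\le 1+\tau\|\widetilde{\mu}^{n+1}\|_{L^{p+1}}$ gives a self-consistent $\tau$-uniform bound.
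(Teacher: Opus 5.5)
Both estimates are proved essentially as in the paper. The first comes from the second inequality in \eqref{eq:energy_decaying} together with $\|\widetilde u^{n+1}\|_{L^{p+1}}\lesssim\|\widetilde u^{n+1}\|_{H^1}\lesssim 1$. The second comes from $u^{n+1}-u^n=(1-s)u^{n+1}-\tau\widetilde\mu^{n+1}$ together with $|1-s|\le\|\widetilde u^{n+1}-u^n\|_{L^{p+1}}$. Your term $\frac{1-s}{s}\widetilde u^{n+1}$ is just $(1-s)u^{n+1}$. The paper therefore uses $\|u^{n+1}\|_{H^1}\lesssim 1$ directly and does not need Lemma~\ref{lem:tilde-u-nplus1-gap-1}; your detour is harmless.

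Your closing paragraph, however, is wrong on both counts and should be dropped.

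First, testing \eqref{eq:ellipse-equation-mu} with $\widetilde\mu^{n+1}$ \emph{does} give a $\tau$-uniform bound. Since $\alpha\ge 1/2$, we have $1+\tau\alpha\ge\tau/2$, so the left-hand side is at least $\frac{\tau}{2}\|\widetilde\mu^{n+1}\|_{H^1}^2$. The right-hand side is at most $C\|\widetilde\mu^{n+1}\|_{H^1}$, with $C$ depending only on $V$, $\omega$, $\lambda_0$, $Q(u^0)$ and Sobolev constants. Hence $\tau\|\widetilde\mu^{n+1}\|_{H^1}\le 2C$ uniformly in $\tau$. The factor $\tau$ that ``degenerates'' is exactly the factor in the quantity being bounded. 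The condition $\alpha\ge 1/2$ is what handles large $\tau$; small $\tau$ causes no difficulty.

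Second, the replacement you propose does not close. From \eqref{eq:energy_decaying} and $Q(u^{n+1})\ge Q(u_g)$ you only get $\tau\|\widetilde\mu^{n+1}\|_{H^1}\le\sqrt{2(Q(u^0)-Q(u_g))}\,s$. Inserting this into $s\le 1+C_S\tau\|\widetilde\mu^{n+1}\|_{H^1}$, where $C_S$ is the $H^1\hookrightarrow L^{p+1}$ constant, yields $s\le 1+C_S\sqrt{2(Q(u^0)-Q(u_g))}\,s$. This bounds $s$ only under the smallness condition $C_S\sqrt{2(Q(u^0)-Q(u_g))}<1$, which the lemma does not assume, since it holds for arbitrary $u^0\in\mathcal{S}_{p+1}$. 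Had you actually substituted this route for Lemma~\ref{lem:uniform-estimates}, the proof would have a gap. Keep the uniform bounds exactly as you used them in your first step.
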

\begin{proof}
    From the decay of energy \eqref{eq:energy_decaying}, we have
    \begin{align*}
        \tau^{2} \left\| \widetilde{\mu}^{n+1} \right\|_{H^{1}}^{2} \leq 2\left\| \widetilde{u}^{n+1} \right\|_{L^{p+1}}^{2}\left( Q(u^{n}) - Q(u^{n+1}) \right).
    \end{align*}
    Combining the continuous embedding $H^{1} \hookrightarrow L^{p+1}$ with the uniform bounds provided by Lemma~\ref{lem:uniform-estimates}, we obtain
    \begin{align*}
    	\tau^{2} \left\| \widetilde{\mu}^{n+1} \right\|_{H^{1}}^{2} \lesssim \left\| \widetilde{u}^{n+1} \right\|_{H^{1}}^{2} \left(  Q(u^{n}) - Q(u^{n+1})\right) \lesssim Q(u^{n}) - Q(u^{n+1}),
    \end{align*}
    which gives the first estimate. Furthermore, from the normalization step \eqref{eq:GFALM-semi-subb.}, we have
    \begin{align*}
    	\left\| u^{n+1} - u^{n} \right\|_{H^{1}} 
        =&\, \left\|u^{n+1}- \widetilde{u}^{n+1}+\widetilde{u}^{n+1} - u^{n} \right\|_{H^{1}} \\
        \leq&\,  \left\| u^{n+1} \right\|_{H^{1}} \left| \left\| \widetilde{u}^{n+1} \right\|_{L^{p+1}}-1\right| + \left\| \widetilde{u}^{n+1} - u^{n} \right\|_{H^{1}} \\
        =&\, \left\| u^{n+1} \right\|_{H^{1}} \left| \left\| \widetilde{u}^{n+1} \right\|_{L^{p+1}}-\left\| u^n \right\|_{L^{p+1}}\right| + \left\| \widetilde{u}^{n+1} - u^{n} \right\|_{H^{1}} \\
        \lesssim&\, \left\| \widetilde{u}^{n+1} - u^{n} \right\|_{H^{1}}
        =\tau\left\| \widetilde{\mu}^{n+1} \right\|_{H^{1}} \lesssim \sqrt{ Q(u^{n}) - Q(u^{n+1}) },
    \end{align*}
    which completes the proof.
\end{proof}

We can then show that the numerical solution $ u^{n} $ for all $n\in \mathbb{N}$ can stay within a local range of $ U_{\delta}(u_{g}) \cap \mathcal{S}_{p+1} $, which is crucial for convergence proof. 

\begin{lemma}[Lyapunov stability]\label{lem:local-stability-ground-state}
	Let the assumptions in Theorem~\ref{thm:exponential-convergence-ground-state} hold and let $\delta>0$ be a sufficiently small constant that meets the requirements of Lemma~\ref{lem:implicit-function-normalization} and Proposition~\ref{prop:coercive-second-variation}. Then, there exists a  $\delta_{0}>0$ such that for all $u^{0} \in U_{\delta_{0}}(u_{g})\cap \mathcal{S}_{p+1}$, the solution of \eqref{eq:GFALM-semi-disc.} with initial guess $u^{0}$ satisfies
	\begin{align*}
		\|u^{n}-u_{g}\|_{H^{1}} \leq \delta, \qquad \forall\, n\in \mathbb{N}.
	\end{align*}
\end{lemma}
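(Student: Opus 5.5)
The natural route is an energy-trapping argument built on the quadratic growth of Proposition~\ref{prop:coercive-second-variation} and the one-step bound of Lemma~\ref{lem:esti-by-energe-gap}. Let $C_1,C_2>0$ be the constants in \eqref{eq:energy-gap-quadratic}, i.e.
\begin{align*}
C_1\|u-u_g\|_{H^1}^2\le Q(u)-Q(u_g)\le C_2\|u-u_g\|_{H^1}^2,\qquad \forall\,u\in U_{\delta}(u_g)\cap\mathcal{S}_{p+1}.
\end{align*}
Let $C_3$ be the constant in $\|u^{n+1}-u^n\|_{H^1}\le C_3\sqrt{Q(u^n)-Q(u^{n+1})}$ from Lemma~\ref{lem:esti-by-energe-gap}. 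This constant must be independent of $n$, $\tau$ and of the initial datum, so long as $u^0$ ranges over a bounded set. Lemma~\ref{lem:uniform-estimates} gives exactly this uniformity, since its constants depend only on $Q(u^0)\le Q(u_g)+C_2\delta^2$.

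Fix $\delta_0\le\delta$, to be chosen, and argue by induction on $n$ that $u^n\in U_\delta(u_g)$. The case $n=0$ is trivial. Suppose $u^0,\dots,u^n\in U_\delta(u_g)\cap\mathcal{S}_{p+1}$, and write $u^{n+1}-u_g=(u^{n+1}-u^n)+(u^n-u_g)$. The one-step bound and monotonicity of $Q$ (Lemma~\ref{lem:energy_decaying}) give
\begin{align*}
\|u^{n+1}-u_g\|_{H^1}\le C_3\sqrt{Q(u^n)-Q(u^{n+1})}+\|u^n-u_g\|_{H^1}.
\end{align*}
Since $u_g$ is the global minimizer on $\mathcal{S}_{p+1}$, $Q(u^{n+1})\ge Q(u_g)$. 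Hence $Q(u^n)-Q(u^{n+1})\le Q(u^n)-Q(u_g)\le Q(u^0)-Q(u_g)\le C_2\delta_0^2$. The lower quadratic bound at step $n$ gives $\|u^n-u_g\|_{H^1}\le\sqrt{(Q(u^0)-Q(u_g))/C_1}\le\sqrt{C_2/C_1}\,\delta_0$. Together,
\begin{align*}
\|u^{n+1}-u_g\|_{H^1}\le \bigl(C_3\sqrt{C_2}+\sqrt{C_2/C_1}\bigr)\delta_0 .
\end{align*}
Choosing $\delta_0\le\delta/(C_3\sqrt{C_2}+\sqrt{C_2/C_1})$, together with $\delta_0\le\delta$, closes the induction. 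Note that $u^{n+1}\in\mathcal{S}_{p+1}$ automatically by \eqref{eq:GFALM-semi-subb.}.

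The main subtlety is that Proposition~\ref{prop:coercive-second-variation} applies only inside $U_\delta(u_g)$, so $u^{n+1}$ cannot be controlled through its own energy before we know where it lies. This is why the one-step jump estimate is needed: it uses only information at step $n$ and the global energy decay.

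The other point to verify is that $C_3$ is independent of $\tau$ (and of $n$). Lemma~\ref{lem:esti-by-energe-gap} uses only $\|\widetilde u^{n+1}\|_{H^1}\lesssim1$, $\|u^{n+1}\|_{H^1}\lesssim1$ and the embedding $H^1\hookrightarrow L^{p+1}$. All of these rest on Lemma~\ref{lem:uniform-estimates}, whose bounds hold uniformly for every $\tau>0$ because $\alpha\ge1/2$ makes $\tau\|\widetilde\mu^{n+1}\|_{H^1}$ bounded independently of $\tau$. Therefore $\delta_0$ depends only on $\delta$, $u_g$ and the problem data, and the lemma holds for every $\tau>0$.
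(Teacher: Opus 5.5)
Your proof is correct and uses the same ingredients as the paper's: the two-sided quadratic growth of Proposition~\ref{prop:coercive-second-variation} on $U_\delta(u_g)\cap\mathcal{S}_{p+1}$, the $\tau$-independent one-step bound of Lemma~\ref{lem:esti-by-energe-gap}, and monotone energy decay together with $Q(u^{n+1})\ge Q(u_g)$. The paper argues by contradiction (a first exit from $U_\delta(u_g)$ would force an earlier iterate into the annulus $\delta/2<\|u^{n_2}-u_g\|_{H^1}\le\delta$ with energy above $Q(u^0)$), whereas you close a direct induction via the triangle inequality; this is an equivalent and slightly cleaner way to package the same argument.
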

\begin{proof}
    From \eqref{eq:energy-gap-quadratic}, there exist constants $0 < c_1 \le c_2$ such that
    \begin{align}\label{eq:energy-gap-quadratic-c1c2}
    	c_1\left\| u - u_{g} \right\|_{H^{1}}^{2} \leq Q(u) - Q(u_{g}) \leq c_2\left\| u - u_{g} \right\|_{H^{1}}^{2}, \quad \forall\, u \in U_{\delta}(u_{g}) \cap \mathcal{S}_{p+1}.
    \end{align}
    By Lemma~\ref{lem:esti-by-energe-gap}, 
    \begin{align*}
    	\left\| u^{n+1} - u^{n} \right\|_{H^{1}} \lesssim \sqrt{Q(u^{n}) - Q(u^{n+1})} \lesssim \sqrt{Q(u^{0}) - Q(u_{g})}\lesssim \|u^0-u_g\|_{H^1},\quad \forall\,n\in\mathbb{N},
    \end{align*}
    and we can say for a constant $c_{3}>0$ that
    \begin{align}\label{eq:un-step-less-u0ug}
        \left\| u^{n+1} - u^{n} \right\|_{H^{1}} \leq c_{3} \left\| u^{0} - u_{g} \right\|_{H^{1}}.
    \end{align}
    Fix $\delta_{0}>0$ such that $\delta_{0}<\frac{\delta}{2}\min\left\{ \sqrt{\frac{c_1}{c_2}}, \frac{1}{c_3}\right\}$. We now proceed to show that the assertion of the lemma holds for this $\delta_{0}$. We argue by contradiction, supposing that there exists $n_{1}\in \mathbb{N}^{+}$ with $\|u^{n_{1}} - u_{g}\|_{H^{1}} > \delta$. Since, by \eqref{eq:un-step-less-u0ug}, $\left\| u^{n+1} - u^{n} \right\|_{H^{1}} \leq c_{3}\delta_{0}<\delta/2$, there exists $0 \neq n_{2} < n_{1}$ such that $\delta/2 <\|u^{n_{2}} - u_{g}\|_{H^{1}} \leq \delta$. Using \eqref{eq:energy-gap-quadratic-c1c2} at $u^{0}$ and $u^{n_{2}}$, respectively, one obtains
    \begin{align*}
    	Q(u^{n_{2}}) -Q(u_{g}) &\geq  c_1\|u^{n_{2}}-u_g\|_{H^1}^2
        > c_1\delta^{2}/4, \\
    	Q(u^{0}) -Q(u_{g}) &\leq c_2\left\| u^{0} - u_{g} \right\|_{H^{1}}^{2} \leq c_2\delta_{0}^{2}.
    \end{align*}
 Due to $\delta_{0}<\frac{\delta}{2} \sqrt{\frac{c_1}{c_2}}$, we have $c_2\delta_0^2<c_1\delta^2/4$ and thus $Q(u^{0}) < Q(u^{n_{2}})$, which contradicts the energy decay in Lemma~\ref{lem:energy_decaying}.
\end{proof}

Under the non-degeneracy condition, we can establish the following \L{}ojasiewicz-type gradient inequality near the GS as the last tool.

\begin{lemma}[\L{}ojasiewicz-type gradient inequality]\label{lem:Lojasiewicz-Inequality}
	Suppose that Assumptions~\ref{assump:potential-condition}\&\ref{assump:omega-conditions} hold,  and the non-degeneracy condition in Proposition~\ref{prop:coercive-second-variation} hold for the GS $u_g$. Then, there exists a sufficiently small $\delta>0$ such that
	\begin{align}
		Q(u) - Q(u_{g}) \lesssim \|\mu(u)\|_{H^{-1}}^{2},
		\quad \forall\, u\in U_{\delta}(u_{g})\cap \mathcal{S}_{p+1}, \label{eq:Lojasiewicz-Inequality}
	\end{align}
	where $\mu(u):=\mathcal{A}u-Q(u)|u|^{p-1}u$. 
\end{lemma}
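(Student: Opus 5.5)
The plan is to test $\mu(u)$ against a suitable tangent direction and use the coercivity \eqref{eq:linearized-operator-coercivity} of $\mathcal{L}$ on $\mathcal{T}_{u_g}$. Write $u=\chi(r,\xi)=(1+r)u_g+\xi$ with $\xi\in\mathcal{T}_{u_g}$ and $|r|\lesssim\|\xi\|_{H^1}^2$ (Lemma~\ref{lem:implicit-function-normalization}). By Proposition~\ref{prop:coercive-second-variation} and \eqref{eq:u-ug-approx-xi}, it suffices to prove
\[
\|\xi\|_{H^1}^2\lesssim \|\mu(u)\|_{H^{-1}}^2 ,
\]
and this in turn follows once we show $\langle \mu(u),\xi\rangle \gtrsim \|\xi\|_{H^1}^2$. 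Indeed, $\langle \mu(u),\xi\rangle\le \|\mu(u)\|_{H^{-1}}\|\xi\|_{H^1}$, and dividing by $\|\xi\|_{H^1}$ gives the bound (the case $\xi=0$ means $u=u_g$, which is trivial).

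To establish this, I would linearize $\mu$ at $u_g$. Since $\mathcal{A}u_g=\lambda_g|u_g|^{p-1}u_g$ and $Q(u_g)=\lambda_g$, we have $\mu(u_g)=0$. Set $e=u-u_g=ru_g+\xi$ and expand
\[
\mu(u)=\mathcal{A}e-\lambda_g\bigl(|u|^{p-1}u-|u_g|^{p-1}u_g\bigr)-(Q(u)-\lambda_g)|u|^{p-1}u .
\]
Using the integral representation of $|u|^{p-1}u-|u_g|^{p-1}u_g$ (the one already used in the proof of Theorem~\ref{thm:convergent-subsequences}, in the real case) together with Lemma~\ref{lem:f2}, this equals $p|u_g|^{p-1}e+R$ with $|\langle R,\xi\rangle| = o(\|e\|_{H^1})\|\xi\|_{H^1}$. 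Hence
\[
\langle\mu(u),\xi\rangle=\langle\mathcal{L}e,\xi\rangle+o(\|\xi\|^2_{H^1})-(Q(u)-\lambda_g)\langle|u|^{p-1}u,\xi\rangle .
\]
The three pieces are handled as follows:
\begin{enumerate}[label=(\alph*)]
\item $\langle\mathcal{L}e,\xi\rangle=\langle\mathcal{L}\xi,\xi\rangle+r\langle\mathcal{L}u_g,\xi\rangle$, and $\langle\mathcal{L}u_g,\xi\rangle=0$ as computed after \eqref{eq:Qu-Qug-Step3}. So this term is $\gtrsim\|\xi\|_{H^1}^2$ by \eqref{eq:linearized-operator-coercivity}.
\item For the last term, $Q(u)-\lambda_g=Q(u)-Q(u_g)\lesssim\|\xi\|_{H^1}^2$ by \eqref{eq:energy-gap-quadratic}. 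Also $\langle|u|^{p-1}u,\xi\rangle=\langle|u|^{p-1}u-|u_g|^{p-1}u_g,\xi\rangle=O(\|\xi\|^2_{H^1})$, because $\xi\in\mathcal{T}_{u_g}$. So this term is $O(\|\xi\|_{H^1}^4)$; in fact an $O(\|\xi\|^3)$ bound would already suffice.
\item The remainder $o(\|\xi\|^2_{H^1})$ is absorbed once $\delta$ is small.
\end{enumerate}
Combining these gives $\langle\mu(u),\xi\rangle\ge (c/2)\|\xi\|_{H^1}^2$ for $\delta$ small, which concludes the proof.

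The main obstacle is the remainder estimate $|\langle R,\xi\rangle|=o(\|e\|_{H^1})\|\xi\|_{H^1}$. For $p<2$ the nonlinearity $|u|^{p-1}u$ is only $C^1$ and not $C^2$, so I cannot use a Taylor bound of order $\|e\|^2$. Instead I would write
\[
R=p\int_0^1\bigl(|u_g+\rho e|^{p-1}-|u_g|^{p-1}\bigr)e\,\mathrm{d}\rho ,
\]
then apply H\"older in $L^{\frac{p+1}{p-1}}\times L^{p+1}\times L^{p+1}$, invoke Lemma~\ref{lem:f2} for the continuity of $v\mapsto|v|^{p-1}$, and finally use the embedding $H^1\hookrightarrow L^{p+1}$. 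This mirrors the argument in \eqref{eq:high-order-esti}. Only a little-$o$ bound is needed, and it is uniform as $\delta\to0$ because $\|u_g+\rho e-u_g\|_{L^{p+1}}\lesssim\delta$.
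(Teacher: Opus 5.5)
Your proof is correct and follows the paper's argument. You linearize $\mu$ at $u_g$ (where $\mu(u_g)=0$), control the nonlinear remainder with H\"older's inequality and Lemma~\ref{lem:f2}, treat the $(Q(u)-\lambda_g)$ term as higher order, apply the coercivity of $\mathcal{L}$, and convert through Proposition~\ref{prop:coercive-second-variation}. The only difference is that you pair $\mu(u)$ with the tangential component $\xi$ rather than with $v=u-u_g$: this makes $\langle\mathcal{L}e,\xi\rangle=\langle\mathcal{L}\xi,\xi\rangle$ exact, so \eqref{eq:linearized-operator-coercivity} applies directly, whereas the paper's pairing with $v\notin\mathcal{T}_{u_g}$ tacitly relies on \eqref{eq:Qu-Qug-Step3} and \eqref{eq:u-ug-approx-xi} to transfer that coercivity to $\langle\mathcal{L}v,v\rangle$.
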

\begin{proof}
    Note that $\mu(u_g)=\mathcal{A}u_g-Q(u_g)|u_g|^{p-1}u_g=\mathcal{A}u_g-\lambda_g|u_g|^{p-1}u_g=0$. For every $u\in U_{\delta}(u_{g})\cap \mathcal{S}_{p+1}$, denoting $v:=u-u_g$, we have
    \begin{align*}
        \left\langle \mu(u), v\right\rangle
        &=\left\langle \mu(u)-\mu(u_g), v\right\rangle \\
        &=\left\langle \mathcal{A}v,v\right\rangle-\left\langle Q(u)|u|^{p-1}u-Q(u_g)|u_g|^{p-1}u_g, v\right\rangle \\
        &=\left\langle \mathcal{A}v,v\right\rangle - \left(Q(u)-Q(u_g)\right)\left\langle |u|^{p-1}u, v\right\rangle-\lambda_g\left\langle |u|^{p-1}u-|u_g|^{p-1}u_g, v\right\rangle.
    \end{align*}
    Applying the facts $\left(Q(u)-Q(u_g)\right)\left\langle |u|^{p-1}u, v\right\rangle
        \lesssim\|v\|_{H^1}^2\|u\|_{L^{p+1}}^p\|v\|_{L^{p+1}}
        =\mathcal{O}\left(\|v\|_{H^1}^3\right)$ and
    \begin{align}
        \left\langle |u|^{p-1}u-|u_g|^{p-1}u_g, v\right\rangle 
        &= \int_{0}^{1}{ p\,\left\langle |u_{g}+\rho\, v|^{p-1}v, v \right\rangle }\, \mathrm{d}\rho \nonumber\\
        &= p\left\langle |u_g|^{p-1}v, v\right\rangle + \int_{0}^{1}{ p\,\left\langle \left( |u_{g}+\rho\, v|^{p-1} - |u_{g}|^{p-1} \right)v, v \right\rangle }\, \mathrm{d}\rho \nonumber\\
        &= p\left\langle |u_g|^{p-1}v, v\right\rangle + \mathcal{O}\left( \left\| |u_{g}+\rho\, v|^{p-1} - |u_{g}|^{p-1} \right\|_{L^{\frac{p+1}{p-1}}} \left\| v \right\|_{L^{p+1}}^{2} \right) \nonumber\\
        &= p\left\langle |u_g|^{p-1}v, v\right\rangle + o\,( \left\| v \right\|_{H^{1}}^{2} ), \label{eq:high-order-esti-2}
    \end{align}
    we obtain
    \begin{align*}
        \left\langle \mu(u), v\right\rangle
        &=\left\langle \mathcal{A}v,v\right\rangle-p\lambda_g\left\langle |u_g|^{p-1}v, v\right\rangle+o\left(\|v\|_{H^1}^2\right) 
        =\left\langle \mathcal{L}v,v\right\rangle+o\left(\|v\|_{H^1}^2\right).
    \end{align*}
    From the definition of $H^{-1}$-norm and the coercivity property \eqref{eq:linearized-operator-coercivity} of the operator $\mathcal{L}$, we have  for some constant $c>0$ that
    \begin{align*}
		\|\mu(u)\|_{H^{-1}}
		\ge \frac{\langle \mathcal{L}v, v\rangle}{\|v\|_{H^{1}}} + o\left(\|v\|_{H^1}\right)
		\ge c\|v\|_{H^{1}}+ o\left(\|v\|_{H^1}\right).
	\end{align*}
    Using the local equivalence between $\|v\|_{H^{1}}^2$ and $Q(u) - Q(u_{g})$, we find when $\delta>0$ is small enough, 
   \begin{align*}
		\|\mu(u)\|_{H^{-1}}
		\ge \frac{c}{2}\|v\|_{H^{1}}
        \gtrsim \sqrt{Q(u)-Q(u_g)},
	\end{align*}
	which proves the assertion.
\end{proof}

Noting that $\mu^n=\mathcal{A}u^n-Q(u^n)|u^n|^{p-1}u^n=\mu(u^n)$, the following result is a direct application of Lemma~\ref{lem:local-stability-ground-state} and Lemma~\ref{lem:Lojasiewicz-Inequality}:
\begin{lemma}\label{lem:Lojasiewicz-Gradient-Inequality}
	Under the conditions of Theorem~\ref{thm:exponential-convergence-ground-state}, there exists some $\delta_{0}>0$ such that for all $u^{0}\in U_{\delta_{0}}(u_{g})\cap \mathcal{S}_{p+1}$,
	\begin{align}
		Q(u^{n}) - Q(u_{g}) \le C_{Lo}\,\|\mu^{n}\|_{H^{-1}}^{2},
		\quad \forall\, n\in\mathbb{N}, \label{eq:Lojasiewicz-Gradient-Inequality}
	\end{align}
	with $C_{Lo}>0$ a constant independent of $n,\tau$.
\end{lemma}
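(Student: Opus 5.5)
The lemma follows by composing Lemma~\ref{lem:local-stability-ground-state} with Lemma~\ref{lem:Lojasiewicz-Inequality}. First I fix the radius. Lemma~\ref{lem:Lojasiewicz-Inequality} provides some $\delta_L>0$ such that \eqref{eq:Lojasiewicz-Inequality} holds on $U_{\delta_L}(u_g)\cap\mathcal{S}_{p+1}$, with an implied constant $C_{Lo}$. This constant depends only on $u_g$, $\lambda_g$, $p$, $\Omega$, $V$, $\omega$ and the coercivity constant $c$ in \eqref{eq:coercivity-second-variation}. It involves neither the scheme nor $\tau$. I then take $\delta:=\min\{\delta_L,\delta_1\}$, where $\delta_1$ is small enough for Lemma~\ref{lem:implicit-function-normalization} and Proposition~\ref{prop:coercive-second-variation} to apply. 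Since both of those results remain valid after shrinking the radius, this $\delta$ satisfies the hypotheses of Lemma~\ref{lem:local-stability-ground-state}.

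Next, Lemma~\ref{lem:local-stability-ground-state} with this $\delta$ produces $\delta_0>0$ such that every $u^0\in U_{\delta_0}(u_g)\cap\mathcal{S}_{p+1}$ gives iterates with $\|u^n-u_g\|_{H^1}\le\delta$ for all $n$. I have to check that $\delta_0$ does not depend on $\tau$. Its definition $\delta_0<\frac{\delta}{2}\min\{\sqrt{c_1/c_2},1/c_3\}$ involves $c_1,c_2$ from \eqref{eq:energy-gap-quadratic-c1c2} and $c_3$ from \eqref{eq:un-step-less-u0ug}. The constant $c_3$ comes from Lemma~\ref{lem:esti-by-energe-gap}, whose constants rely only on the energy decay \eqref{eq:energy_decaying} and the uniform bound $\|\widetilde u^{n+1}\|_{H^1}\lesssim1$. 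I would briefly confirm that these are $\tau$-uniform, using the convention that $\lesssim$ excludes $\tau$-dependence. So the statement holds for all $\tau>0$ simultaneously.

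Finally, each iterate satisfies $\|u^n\|_{L^{p+1}}=1$ by the normalization step \eqref{eq:GFALM-semi-subb.} (and by assumption for $n=0$). Together with the previous step this gives $u^n\in U_{\delta}(u_g)\cap\mathcal{S}_{p+1}\subset U_{\delta_L}(u_g)\cap\mathcal{S}_{p+1}$. Moreover, $\mu^n=\mathcal{A}u^n-Q(u^n)|u^n|^{p-1}u^n=\mu(u^n)$. Applying \eqref{eq:Lojasiewicz-Inequality} at $u=u^n$ then yields \eqref{eq:Lojasiewicz-Gradient-Inequality} with $C_{Lo}$ independent of $n$ and $\tau$. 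There is no real obstacle here; the only point needing care is the bookkeeping that neither $\delta_0$ nor $C_{Lo}$ depends on $\tau$, and this comes down to the $\tau$-uniformity of the constants in Lemmas~\ref{lem:uniform-estimates} and \ref{lem:esti-by-energe-gap}.
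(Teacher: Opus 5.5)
Your proposal is correct and follows the paper's argument: compose Lemma~\ref{lem:local-stability-ground-state}, which keeps every iterate $u^n\in\mathcal{S}_{p+1}$ inside $U_{\delta}(u_g)$, with Lemma~\ref{lem:Lojasiewicz-Inequality}, using the identity $\mu^n=\mu(u^n)$. Your additional check that neither $\delta_0$ nor $C_{Lo}$ depends on $\tau$ is consistent with the paper's conventions and the $\tau$-uniform bounds of Lemmas~\ref{lem:uniform-estimates} and~\ref{lem:esti-by-energe-gap}.
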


\begin{proof}[Proof of Theorem~\ref{thm:exponential-convergence-ground-state}] 
From the elliptic equation \eqref{eq:ellipse-equation-mu}, we can  derive that
\begin{align*}
	\left\langle \mu^{n}, v \right\rangle \lesssim
    &\, \left( 1 + \tau \alpha \right)\left\|\widetilde{\mu}^{n+1}\right\|_{L^{2}}\|v\|_{L^{2}} + \tau \left\| \nabla \widetilde{\mu}^{n+1} \right\|_{L^{2}}\left\| \nabla v \right\|_{L^{2}} \lesssim \left( \left\|\widetilde{\mu}^{n+1}\right\|_{L^{2}} + \tau \left\| \widetilde{\mu}^{n+1} \right\|_{H^{1}} \right)\|v\|_{H^{1}},
\end{align*}
which by noting the definition of $H^{-1}$-norm can give $ \|\mu^{n}\|_{H^{-1}} \lesssim \left\|\widetilde{\mu}^{n+1}\right\|_{L^{2}} + \tau \left\| \widetilde{\mu}^{n+1} \right\|_{H^{1}}$. By Lemma~\ref{lem:energy_decaying} and the $H^{1}$-bound of $\widetilde{u}^{n+1}$ in Lemma~\ref{lem:uniform-estimates}, we have
\begin{align*}
	\tau\|\mu^{n}\|_{H^{-1}}^{2}
	&\lesssim \tau\|\widetilde{\mu}^{n+1}\|_{L^{2}}^{2} + \tau^{3}\left\| \widetilde{\mu}^{n+1} \right\|_{H^{1}}^{2} \\
	&\lesssim \left(1+\tau\right)\left(\tau\|\widetilde{\mu}^{n+1}\|_{L^{2}}^{2} + \tau^{2}\left\| \widetilde{\mu}^{n+1} \right\|_{H^{1}}^{2}\right) \\
    & \lesssim (1+\tau)\left(Q(u^{n}) - Q(u^{n+1})\right).
\end{align*}
Hence, for some constant $c_{D}>0$ satisfying $c_{D}\leq C_{Lo}$, 
\[ Q(u^{n}) - Q(u^{n+1}) \ge c_{D}\frac{\tau}{1+\tau} \|\mu^{n}\|_{H^{-1}}^{2}, \]
which combining with \eqref{eq:Lojasiewicz-Gradient-Inequality} gives
\begin{align*}
	Q(u^{n}) - Q(u^{n+1})
	\ge \frac{c_{D}}{C_{Lo}}\,\frac{\tau}{1+\tau}\big( Q(u^{n}) - Q(u_{g}) \big).
\end{align*}
Denoting $a_{0}=c_{D}/C_{Lo}\in(0,1]$, we then have 
\begin{align}
	Q(u^{n+1}) - Q(u_{g}) \le \left(1-\frac{a_{0}\tau}{1+\tau}\right)\big( Q(u^{n}) - Q(u_{g}) \big),\quad n\in\mathbb{N}. \label{eq:Q-nplus1-Q-ug}
\end{align}
Using \eqref{eq:Q-nplus1-Q-ug} recursively, we find that for every $\tau>0$,
\begin{align*}
	Q(u^{n}) - Q(u_{g})
	\le \left(1-\frac{a_{0}\tau}{1+\tau}\right)^{n}\big( Q(u^{0}) - Q(u_{g}) \big)
	\le \exp\left(-n\frac{a_{0}\tau }{1+\tau}\right)\big( Q(u^{0}) - Q(u_{g}) \big).
\end{align*}
Using the local equivalence $Q(u^{n})-Q(u_{g}) \asymp \|u^{n}-u_{g}\|_{H^{1}}^{2}$ from Proposition~\ref{prop:coercive-second-variation}, we complete the proof of Theorem~\ref{thm:exponential-convergence-ground-state} by setting $a=a_{0}/2$.
\end{proof}

\subsection{Full discretization and its convergence analysis}
Now we look at the case of fully discretized GFALM, practically used in \cite{liu2023computing}. Consider the periodic boundary conditions for \eqref{eq:GFALM-semi-disc.}, which can then be efficiently implemented in the spatial direction by the Fourier pseudospectral discretization \cite{shen2011spectral}. 
For notational simplicity, we present the one-dimensional case, i.e., $d=1,\mathbf{x}=x$. Higher dimensional cases can be done in the same manner by tensor-product constructions.

\subsubsection{GFALM with Fourier pseudospectral discretization}

Let the computational domain be $\Omega=[x_{0},\, x_{0}+L]\subset \mathbb{R}$ and set the spatial step size $h:=L/M$ with $M>0$ a positive even integer. Denote $ \varGamma_{h} := \left\{ 0,1,\dots,M-1 \right\}, \ \Omega_{h} := \{x_{j}=x_{0}+jh: j=0,1,\dots,M-1\},$ and $X_{h}:= \{ u_h=(u_{0}, u_{1}, \dots, u_{M-1})^{\top} \colon u_{j}\in \mathbb{R},\, j\in \varGamma_{h} \}$. Let $Y_{M}:={\rm span}\{\varphi_{j}(x) \colon j\in\varGamma_{h}\}$ be the cardinal trigonometric interpolation space where
\begin{align*}
	\varphi_{j}(x)=\frac{1}{M}\sum_{l=-M/2}^{M/2}\frac{1}{a_{l}}\,\mathrm{e}^{\mathrm{i}l\sigma(x-x_{j})} \quad \text{with}\quad
	a_{l}=\begin{cases}
		1, & |l|<M/2,\\
		2, & |l|=M/2,
	\end{cases}
	\quad  \sigma:=\frac{2\pi}{L},
\end{align*}
satisfying the cardinality property $\varphi_{j}(x_{l})=\delta_{j,l}$. The interpolation operator $\mathcal{I}_{h}:L^{2}(\Omega)\to Y_{M}$ is then defined by
\begin{align}
	\mathcal{I}_{h}u(x):=\sum_{j=0}^{M-1}u_{j}\,\varphi_{j}(x), \quad\text{with}\quad u_{j}:=u(x_{j}). \label{eq:interpolation-Ih}
\end{align}
Differentiating \eqref{eq:interpolation-Ih} twice yields the matrix $\mathbf{D}_{xx}\in \mathbb{R}^{M\times M}$ defined by
\begin{align*}
    \partial_{x}^{2} \mathcal{I}_{h}u(x_{j})
	= \sum_{l=0}^{M-1}u_{l} \, \frac{{\rm d}^{2}}{{\rm d}x^{2}}\varphi_{l}(x_{j})
	=:(\mathbf{D}_{xx}u)_{j}, \qquad (\mathbf{D}_{xx})_{j,l}:=\frac{{\rm d}^{2}}{{\rm d}x^{2}}\varphi_{l}(x_{j}).
\end{align*}
First, we introduce some discrete (semi-)norms. For any grid function $u_{h}, v_{h}\in X_{h}$ with periodicity, the discrete inner product, discrete $L^{2}$-norm and forward-difference semi-norm are denoted by
\begin{align*}
	\left\langle u_{h}, v_{h} \right\rangle_{h}:=h\sum_{j=0}^{M-1}u_{j}v_{j}, \quad \|u_{h}\|_{h}:=\sqrt{\langle u_{h},u_{h}\rangle_{h}}, \quad 
    \|\nabla_{h} u_{h}\|_{h}:=\sqrt{\langle \delta_{x}^{+}u_{h},\delta_{x}^{+}u_{h}\rangle_{h}},
\end{align*}
where $(\delta_{x}^{+}u_{h})_{j}:=(u_{j+1}-u_{j})/h$. We also introduce the discrete $H^{1}$-(semi-)norms and discrete $L^{p}$-norms:
\begin{align*}
    &|u_{h}|_{1,h}:=\sqrt{\langle -\mathbf{D}_{xx}u_{h},u_{h}\rangle_{h}}, \quad \|u_{h}\|_{1,h}:=\sqrt{\|u_{h}\|_{h}^{2}+|u_{h}|_{1,h}^{2}}, \quad
	\|u_{h}\|_{h,p}:=\bigg(h\sum_{j=0}^{M-1}|u_{j}|^{p}\bigg)^{1/p}.
\end{align*}

Note that the matrix $-\mathbf{D}_{xx}$ is real symmetric and positive semidefinite \cite{shen2011spectral}. Hence, there exists a unique real symmetric matrix $\big(-\mathbf{D}_{xx}\big)^{\frac{1}{2}}$ such that $-\mathbf{D}_{xx} = \big(-\mathbf{D}_{xx}\big)^{\frac{1}{2}} \big(-\mathbf{D}_{xx}\big)^{\frac{1}{2}}$. Then, for any $u_{h},v_{h}\in X_{h}$, by symmetry and the Cauchy–Schwarz inequality,
\begin{align*}
	\left| \left\langle -\mathbf{D}_{xx}u_{h}, v_{h} \right\rangle_{h} \right|
	\le& \left\| \big(-\mathbf{D}_{xx}\big)^{\frac{1}{2}}u_{h}\right\|_{h} \left\|\big(-\mathbf{D}_{xx}\big)^{\frac{1}{2}} v_{h}\right\|_{h}
	= |u_{h}|_{1,h}\,|v_{h}|_{1,h}.
\end{align*}
Some more basic properties that will be useful are presented below, the proof can be found in, e.g., \cite{coudiere2001discrete,gong2017conservative,kojima2017some}.
\begin{lemma}\label{lem:h1-gap-dx-disc.}
For any grid function $u_h\in X_{h}$ and any $p\ge 1$, the following properties hold.\begin{itemize}
    \item[(i)] Equivalence of the semi-norms:
	$ \|\nabla_{h}u_{h}\|_{h} \le |u_{h}|_{1,h} \le \frac{\pi}{2}\,\|\nabla_{h}u_{h}\|_{h}$.
    \item[(ii)] Discrete Sobolev embedding: $\|u_{h}\|_{h,p+1} \lesssim \sqrt{\|u_{h}\|_{h}^{2} + \|\nabla_{h}u_{h}\|_{h}^{2}}\lesssim \|u_{h}\|_{1,h}$.
    \end{itemize}
\end{lemma}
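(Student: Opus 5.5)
The plan is to treat (i) by diagonalizing both semi-norms with the discrete Fourier transform, and to derive (ii) from a discrete $L^\infty$ bound in one dimension combined with (i). For (i), I expand a periodic grid function as $u_j=\sum_{l=-M/2}^{M/2-1}\widehat{u}_l\,\mathrm{e}^{\mathrm{i}l\sigma(x_j-x_0)}$. The discrete Parseval identity then gives
\begin{align*}
\|u_h\|_h^2=L\sum_{l}|\widehat{u}_l|^2 .
\end{align*}
Next I identify the action of $-\mathbf{D}_{xx}$ on each mode. Because each $\varphi_j$ symmetrizes the Nyquist frequency with weight $1/a_l$, evaluating $\partial_x^2\mathcal{I}_h$ at grid points multiplies every mode, including $l=-M/2$, by $-(\sigma l)^2$. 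Hence
\begin{align*}
|u_h|_{1,h}^2=L\sum_l(\sigma l)^2|\widehat{u}_l|^2 .
\end{align*}

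The forward difference acts on mode $l$ as multiplication by $(\mathrm{e}^{\mathrm{i}\sigma l h}-1)/h$, whose squared modulus is $\frac{4}{h^2}\sin^2(\sigma l h/2)$. Setting $\theta_l:=\sigma l h/2=\pi l/M\in[-\pi/2,\pi/2]$, this gives
\begin{align*}
\|\nabla_h u_h\|_h^2=L\sum_l(\sigma l)^2\Big(\frac{\sin\theta_l}{\theta_l}\Big)^2|\widehat{u}_l|^2 ,
\end{align*}
with the factor read as $1$ when $l=0$. On $[-\pi/2,\pi/2]$ one has $2/\pi\le \sin\theta/\theta\le 1$. Comparing the two sums termwise yields $\|\nabla_hu_h\|_h\le|u_h|_{1,h}\le\frac{\pi}{2}\|\nabla_hu_h\|_h$. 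The one point needing care here is the Nyquist mode, where $|\theta|=\pi/2$ attains the extreme constant $2/\pi$ exactly. This is precisely where I need to check that the cardinal basis yields the symbol $(\sigma M/2)^2$ and nothing smaller, so that the sharp constant $\pi/2$ survives. I expect this verification to be the main technical obstacle.

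For (ii), working in $d=1$, I first prove a discrete $L^\infty$ bound by telescoping. For any indices $j,k$,
\begin{align*}
|u_j|^2\le|u_k|^2+\sum_i\big||u_{i+1}|^2-|u_i|^2\big|\le|u_k|^2+h\sum_i(|u_{i+1}|+|u_i|)\,|\delta_x^+u_i| .
\end{align*}
By Cauchy--Schwarz and periodicity, the last sum is at most $2\|u_h\|_h\|\nabla_hu_h\|_h$. Multiplying by $h$ and summing over $k$, then dividing by $L$, gives
\begin{align*}
\max_j|u_j|^2\le L^{-1}\|u_h\|_h^2+2\|u_h\|_h\|\nabla_hu_h\|_h\lesssim\|u_h\|_h^2+\|\nabla_hu_h\|_h^2 .
\end{align*}
I then interpolate with $\|u_h\|_{h,p+1}^{p+1}\le\max_j|u_j|^{p-1}\|u_h\|_h^2$, which yields the first inequality of (ii). The second inequality follows from the left half of (i), namely $\|\nabla_hu_h\|_h\le|u_h|_{1,h}$.

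The constants depend only on $L$ and $p$, never on $M$, which is what the later fully discrete analysis requires. In higher dimensions one would instead use a discrete Gagliardo--Nirenberg inequality on tensor grids, as in the cited references, but the paper only needs the one-dimensional case.
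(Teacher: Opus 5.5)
Your proof is correct. The paper gives no argument of its own for this lemma and simply defers to \cite{coudiere2001discrete,gong2017conservative,kojima2017some}. Your two steps are essentially the standard proofs from that literature:
\begin{itemize}
\item for (i), DFT diagonalization plus Jordan's inequality $2/\pi\le\sin\theta/\theta\le 1$, which is how the equivalence of $|\cdot|_{1,h}$ with the forward-difference semi-norm is usually proved for the Fourier pseudospectral method;
\item for (ii), the classical one-dimensional telescoping bound $\max_j|u_j|^2\le L^{-1}\|u_h\|_h^2+2\|u_h\|_h\|\nabla_h u_h\|_h$, followed by $\|u_h\|_{h,p+1}^{p+1}\le \max_j|u_j|^{p-1}\|u_h\|_h^2$.
\end{itemize}

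The Nyquist mode, which you flag as the main obstacle, is a one-line check. The $l=\pm M/2$ part of $\mathcal{I}_h u$ equals $\widehat{u}_{-M/2}\cos\bigl(\tfrac{M\sigma}{2}(x-x_0)\bigr)$. Its second derivative at $x_j$ is $-(\sigma M/2)^2(-1)^j\,\widehat{u}_{-M/2}$, which is exactly the symbol you use.

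One small correction to your commentary. At that mode the two symbols are $(\sigma M/2)^2=\pi^2/h^2$ for $|u_h|_{1,h}^2$ versus $4/h^2$ for $\|\nabla_h u_h\|_h^2$. A $\mathbf{D}_{xx}$ symbol below $4/h^2$ would break the left inequality $\|\nabla_h u_h\|_h\le |u_h|_{1,h}$. By contrast, any value up to $\pi^2/h^2$ is compatible with the right one. So the check protects the lower bound, not the constant $\pi/2$.

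Compared with the paper's bare citation, your self-contained route gives explicit constants depending only on $L$ and $p$ and independent of $M$. It does not cover the tensor-product extension to $d=2,3$ that the paper alludes to, because the $\ell^\infty$ step is intrinsically one-dimensional. There one needs the genuinely multi-dimensional discrete Sobolev inequalities of the cited references. Also, in $d=3$, $p$ can no longer be arbitrary and must obey the restriction in Assumption~\ref{assump:potential-condition}.
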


The GS in the discrete setting reads
\begin{align}
	u_{h}^{*} \in \arg\min \left\{ Q_{h}(u_{h}) \colon u_{h} \in \mathcal{S}_{h, p+1} \right\}, \label{eq:ug-definition-disc.}
\end{align}
where $\mathcal{S}_{h, p+1} := \left\{ u_{h}\in X_{h} : \|u_{h}\|_{h,p+1} = 1 \right\}$ and
\begin{align*}
	Q_{h}(u_{h}) := \frac{1}{2}\left| u_{h} \right|_{1,h}^{2} + h\sum_{j=0}^{M-1}V_{j}\left| u_{j} \right|^{2} + \omega \|u_{h}\|_{h}^{2}.
\end{align*}
with $V_{j}= V(x_{j})$. We now give the following assumption as a discrete analogue of Assumption~\ref{assump:omega-conditions}.

\begin{assumption}\label{assump:omega-conditions-disc.}
	Assume that $\omega$ satisfies $\omega > -\lambda_{0}'$, where
	\begin{align}
		\lambda_{0}' := \inf_{u_{h}\in X_{h}, \|u_{h}\|_{h} = 1}\left( \frac{1}{2}\left| u_{h} \right|_{1,h}^{2} + h\sum_{j=0}^{M-1}V_{j}\left| u_{j} \right|^{2} \right). \label{eq:omega-lambda-0-def.-disc.}
	\end{align}
\end{assumption}

Define the linear operator $\mathcal{A}_{h}:X_h\to X_h$ as
\begin{align*}
	(\mathcal{A}_{h}u_{h})_j := \left( \frac{1}{2}\frac{\delta Q_{h}(u_{h})}{\delta u_{h}} \right)_{j} =  -\frac{1}{2}\left( \mathbf{D}_{xx} u_{h} \right)_{j} + \left( V_{j} + \omega \right)u_{j},\quad \forall\, u_h\in X_h.
\end{align*}
Under Assumption~\ref{assump:omega-conditions-disc.}, $\mathcal{A}_{h}$ is real symmetric and positive definite, and
$Q_h(u_h)=\left\langle \mathcal{A}_h u_h,u_h\right\rangle_h$. 

\begin{lemma}\label{lem:h1-control-by-Q-disc.}
	Under Assumptions~\ref{assump:potential-condition}\&\ref{assump:omega-conditions-disc.}, there exists a constant $C_{0}'>0$ depending only on $\lambda_{0}'$ and $\omega$ such that 
	\begin{align*}
		Q_{h}(v_{h}) \geq C_{0}' \left\| v_{h} \right\|_{1,h}^{2}, \
        \quad \forall v_{h}\in X_{h}.
	\end{align*}
\end{lemma}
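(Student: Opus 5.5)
The plan is to mirror the proof of Lemma~\ref{lem:h1-control-by-Q}, with the continuous Rayleigh quotient replaced by its discrete analogue $\lambda_0'$ from \eqref{eq:omega-lambda-0-def.-disc.}. Write $\mathcal{A}_{0,h}u_h:=-\frac12\mathbf{D}_{xx}u_h+(V_j u_j)_j$, so that $\mathcal{A}_h=\mathcal{A}_{0,h}+\omega I$. Since $-\mathbf{D}_{xx}$ is symmetric positive semidefinite and $V_j\ge0$ (Assumption~\ref{assump:potential-condition}), we have $\langle\mathcal{A}_{0,h}u_h,u_h\rangle_h=\frac12|u_h|_{1,h}^2+h\sum_j V_j|u_j|^2\ge0$. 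Hence $\lambda_0'\ge0$ and, by the definition of $\lambda_0'$, $\langle\mathcal{A}_{0,h}u_h,u_h\rangle_h\ge\lambda_0'\|u_h\|_h^2$ for all $u_h\in X_h$ (by homogeneity).

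\emph{Case $\omega>0$.} Dropping the nonnegative potential term gives
\[
Q_h(v_h)\ge\tfrac12|v_h|_{1,h}^2+\omega\|v_h\|_h^2\ge\min\{\tfrac12,\omega\}\|v_h\|_{1,h}^2 .
\]

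\emph{Case $-\lambda_0'<\omega\le0$.} Here $\lambda_0'>0$. Split, exactly as in the continuous proof,
\[
\mathcal{A}_h=\frac{\lambda_0'+\omega}{2\lambda_0'}\mathcal{A}_{0,h}+\Bigl(\frac{\lambda_0'-\omega}{2\lambda_0'}\mathcal{A}_{0,h}+\omega I\Bigr).
\]
The second bracket has quadratic form at least $\bigl(\frac{\lambda_0'-\omega}{2}+\omega\bigr)\|v_h\|_h^2=\frac{\lambda_0'+\omega}{2}\|v_h\|_h^2\ge0$, using the Rayleigh bound and the nonnegativity of its coefficient. In the first term, keep only the seminorm part, which gives $\frac{\lambda_0'+\omega}{4\lambda_0'}|v_h|_{1,h}^2$. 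Adding the two contributions yields
\[
Q_h(v_h)\ge(\lambda_0'+\omega)\min\Bigl\{\frac1{4\lambda_0'},\frac12\Bigr\}\|v_h\|_{1,h}^2 .
\]

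There is no real obstacle. The only points to check are that the discrete $H^1$ norm $\|v_h\|_{1,h}^2=\|v_h\|_h^2+|v_h|_{1,h}^2$ matches exactly the two pieces produced above, and that $\lambda_0'>0$ in the second case, so that the division is legitimate. Both follow from Assumption~\ref{assump:omega-conditions-disc.}. The resulting constant $C_0'$ depends only on $\lambda_0'$ and $\omega$.
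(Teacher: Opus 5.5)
Your proof is correct and is exactly what the paper intends: the paper omits the details and says only that the proof is the discrete analogue of Lemma~\ref{lem:h1-control-by-Q}. Your case split $\omega>0$ versus $-\lambda_0'<\omega\le 0$, together with the same operator decomposition using the discrete Rayleigh bound with $\lambda_0'$ in place of $\lambda_0$, is precisely that analogue.
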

\begin{proof}
	The assertion is the discrete analogue of Lemmas \ref{lem:h1-control-by-Q} whose proof follows the continuous case with each estimate in the continuum setting replaced by its discrete counterpart. The details are omitted for brevity. 
\end{proof}

We can now prove the existence of the discrete GS in \eqref{eq:ug-definition-disc.}.

\begin{proposition}[Existence of discrete GS] \label{prop:existence-GS-disc.}
	Under Assumptions~\ref{assump:potential-condition}\&\ref{assump:omega-conditions-disc.}, there exists a GS $u_{h}^{*}$ for the discrete constrained minimization \eqref{eq:ug-definition-disc.}.
\end{proposition}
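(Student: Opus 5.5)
The plan is to use finite-dimensionality: $X_h\cong\mathbb{R}^M$, so the discrete minimization \eqref{eq:ug-definition-disc.} is a minimization of a continuous function over a compact set, and Weierstrass' theorem applies directly.

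First, observe that $\mathcal{S}_{h,p+1}=\{u_h\in X_h:\|u_h\|_{h,p+1}=1\}$ is nonempty. For instance, the constant vector $u_j\equiv (hM)^{-1/(p+1)}=L^{-1/(p+1)}$ lies in it. Next, $\mathcal{S}_{h,p+1}$ is closed, being the preimage of $\{1\}$ under the continuous map $u_h\mapsto\|u_h\|_{h,p+1}$. It is also bounded, since $\|\cdot\|_{h,p+1}$ is a norm on the finite-dimensional space $X_h$ and hence equivalent to the Euclidean norm. By Heine--Borel, $\mathcal{S}_{h,p+1}$ is compact.

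Second, $Q_h(u_h)=\langle\mathcal{A}_h u_h,u_h\rangle_h$ is a quadratic form in the $M$ real variables $u_j$: it is built from the matrix $-\mathbf{D}_{xx}$, the values $V_j$ (finite because $V\in L^\infty$ is evaluated at grid points; if one worries about pointwise values of an $L^\infty$ function, one simply takes $V_j$ as given finite numbers, as the scheme does) and $\omega$. Hence $Q_h$ is continuous on $X_h$, and it attains its minimum on the compact set $\mathcal{S}_{h,p+1}$ at some $u_h^*$. This $u_h^*$ is a discrete GS.

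For completeness, Lemma~\ref{lem:h1-control-by-Q-disc.} under Assumption~\ref{assump:omega-conditions-disc.} gives $Q_h(u_h)\ge C_0'\|u_h\|_{1,h}^2>0$ on $\mathcal{S}_{h,p+1}$. Consequently the minimum value is strictly positive, so the discrete Lagrange multiplier $Q_h(u_h^*)$ is positive, consistent with the continuous setting. I expect no real obstacle; the only point needing care is noting that all norms on $X_h$ are equivalent, so compactness is immediate, unlike the continuous case where one needs compact Sobolev embedding and a minimizing-sequence argument.
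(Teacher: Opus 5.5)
Your argument is correct, and it is more direct than the paper's. The paper uses a minimizing-sequence argument:
\begin{itemize}
\item it takes a minimizing sequence and bounds it in $\|\cdot\|_{1,h}$ using the coercivity estimate of Lemma~\ref{lem:h1-control-by-Q-disc.}, hence Assumption~\ref{assump:omega-conditions-disc.};
\item it extracts a convergent subsequence by finite-dimensional compactness;
\item it uses norm equivalence to show the limit stays on $\mathcal{S}_{h,p+1}$;
\item it uses continuity of $Q_h$ to show the infimum is attained.
\end{itemize}

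You instead observe that $\mathcal{S}_{h,p+1}$ is itself compact, since it is the unit sphere of the norm $\|\cdot\|_{h,p+1}$ on $X_h\cong\mathbb{R}^M$. You then apply the extreme value theorem to the continuous quadratic form $Q_h$. This shows that existence needs neither the coercivity lemma nor Assumption~\ref{assump:omega-conditions-disc.}. In fact the coercivity bound is redundant in the paper's argument as well, because boundedness of any sequence in $\mathcal{S}_{h,p+1}$ already follows from the constraint. As you note, Assumption~\ref{assump:omega-conditions-disc.} is only needed to make the minimum value positive, i.e.\ the multiplier $\lambda^*=Q_h(u_h^*)$.

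What the paper's route buys is a structure parallel to the continuous problem. There the $L^{p+1}$-sphere is not compact in $H^1$, so coercivity of $Q$ together with a compact embedding is genuinely needed. Your route is shorter and exploits finite-dimensionality fully. Your explicit check that $\mathcal{S}_{h,p+1}\neq\emptyset$ is a small point the paper leaves implicit.
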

\begin{proof}
    Let $\{u_{h}^{k} \}_{k=0}^{\infty}\subset \mathcal{S}_{h,p+1}$ be a minimizing sequence with $Q_{h}(u_{h}^{k})\to Q_{g,h}:=\inf_{u_{h} \in \mathcal{S}_{h, p+1}}Q_{h}(u_{h})$ as $k\to\infty$. By Lemma~\ref{lem:h1-control-by-Q-disc.}, $\{u_{h}^{k} \}_{k=0}^{\infty}$ is bounded in $\|\cdot\|_{1,h}$ norm. Since the finite-dimensional vector space $X_h$ is compact (for every fixed $h>0$), $\{u_{h}^{k}\}_{k=0}^{\infty}$ has an accumulation point $u_h^*\in X_h$ with a subsequence $\{u_{h}^{k_j}\}_{j=0}^{\infty}$ satisfying $\|u_{h}^{k_{j}} - u_{h}^{*}\|_{1,h}\rightarrow 0$ as $j\rightarrow \infty$. By the equivalence of norms in the finite-dimensional space, we have
    $\|u_{h}^{k_{j}} - u_{h}^{*}\|_{h,p+1}\rightarrow 0$ (as $j\rightarrow \infty$) and $\|u_{h}^{*}\|_{h,p+1}=\lim_{j\to\infty}\|u_{h}^{k_{j}}\|_{h,p+1}=1$, i.e., $u_{h}^{*}\in\mathcal{S}_{h,p+1}$. Furthermore,
    \begin{align*}
    	\left|Q_h(u_{h}^{k_{j}})-Q_h(u_{h}^{*})\right| = \left|\left\langle \mathcal{A}_h\big(u_{h}^{k_{j}}+ u_{h}^{*}\big),(u_{h}^{k_{j}} - u_{h}^{*})\right\rangle_h \right| \lesssim  \left(\big\|u_{h}^{k_{j}}\big\|_{1,h}+\big\|u_{h}^{*}\big\|_{1,h}\right)\big\|u_{h}^{k_{j}} - u_{h}^{*}\big\|_{1,h}\to0,
    \end{align*}
    yielding that $Q_h(u_{h}^{*})=\lim_{j\to\infty}Q_h(u_{h}^{k_{j}})=Q_{g,h}$. Hence, $u_{h}^{*}\in\mathcal{S}_{h,p+1}$ is a discrete GS of \eqref{eq:ug-definition-disc.}.
\end{proof}

With above discrete ingredients, we can write down the fully discrete GFALM scheme: for $j\in\varGamma_{h}$,
\begin{subequations}\label{eq:GFALM-disc.}
	\begin{align}
		&\frac{1}{\tau}\left( \widetilde{u}_{j}^{n+1} - u_{j}^{n} \right) = -\widetilde{\mu}_{j}^{n+1}, \label{eq:GFALM-suba.} \\
		&\widetilde{\mu}_{j}^{n+1} = -\frac{1}{2}\left( \mathbf{D}_{xx}\widetilde{u}_{h}^{n+1} \right)_{j} + \left( V_{j} + \omega - \widetilde{\lambda}_{h}(u_{h}^{n}) \left| u_{j}^{n} \right|^{p-1} \right)u_{j}^{n} + \alpha\left( \widetilde{u}_{j}^{n+1} - u_{j}^{n} \right), \label{eq:GFALM-subb.} \\
		&u_{j}^{n+1} = \widetilde{u}_{j}^{n+1} / \|\widetilde{u}_{h}^{n+1}\|_{h, p+1}, \label{eq:GFALM-subc.}
	\end{align}
\end{subequations}
where $\alpha\geq0$ is the selected stabilization parameter, and the discrete asymptotic Lagrange multiplier is defined as $\widetilde{\lambda}_{h}(u_{h}) = {Q_{h}(u_{h})}/{\|u_{h}\|_{h,p+1}^{p+1}}=Q_h(u_h)$.

\begin{lemma}[Discrete energy stability]\label{lem:energy-decaying-disc.}
	Under Assumptions~\ref{assump:potential-condition}\&\ref{assump:omega-conditions-disc.}, if
	\begin{align}
		\alpha \ge \frac{1}{2}\max\Big\{0,\ \max_{j\in \varGamma_{h}}\big( V(x_{j})+\omega\big)\Big\} + \frac{1}{2}, \label{eq:alphan-condition-old}
	\end{align}
	the fully discrete GFALM \eqref{eq:GFALM-disc.} possesses the unconditional energy stability for $Q_{h}(u_{h})$: $\forall\,\tau>0$,
	\begin{align}
		Q_{h}(u_{h}^{n+1}) - Q_{h}(u_{h}^{n}) \le -\frac{\tau^{2} \left\| \widetilde{\mu}_{h}^{n+1} \right\|_{1,h}^{2} + 4\tau\|\widetilde{\mu}_{h}^{n+1}\|_{h}^{2}}{2\|\widetilde{u}_{h}^{n+1}\|_{h,p+1}^{2}}. \label{eq:energy-decay-disc.}
	\end{align}
\end{lemma}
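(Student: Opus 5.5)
The plan is to mirror the proof of Lemma~\ref{lem:energy_decaying}, replacing continuous quantities by their discrete counterparts. I first rewrite \eqref{eq:GFALM-subb.} as $\widetilde{\mu}_h^{n+1} = \mathcal{A}_h u_h^n - \frac12\mathbf{D}_{xx}(\widetilde{u}_h^{n+1}-u_h^n) - \widetilde{\lambda}_h(u_h^n)|u_h^n|^{p-1}u_h^n + \alpha(\widetilde{u}_h^{n+1}-u_h^n)$, where products and powers are taken pointwise. I then take the discrete inner product $\langle\cdot,\cdot\rangle_h$ of the scheme with $\widetilde{u}_h^{n+1}$ and with $\widetilde{u}_h^{n+1}-u_h^n$. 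Using the symmetry of $\mathbf{D}_{xx}$ and of $\mathcal{A}_h$, together with the polarization identity $2\langle a,a-b\rangle_h = \|a\|^2+\|a-b\|^2-\|b\|^2$ in the seminorm $|\cdot|_{1,h}$ and in the weighted norm with weight $V_j+\omega$, this should produce the discrete identity
\[
Q_h(\widetilde{u}_h^{n+1}) = \widetilde{\lambda}_h(u_h^n)\, h\sum_j |u_j^n|^{p-1}|\widetilde{u}_j^{n+1}|^2 - \tfrac12 |\widetilde{u}_h^{n+1}-u_h^n|_{1,h}^2 - h\sum_j\Big(\tfrac{2}{\tau}+2\alpha-V_j-\omega+\widetilde{\lambda}_h(u_h^n)|u_j^n|^{p-1}\Big)|\widetilde{u}_j^{n+1}-u_j^n|^2 .
\]
This is exactly the analogue of the identity quoted from \cite{liu2023computing}. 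The pointwise nonlinearity transfers verbatim because the nonlinear term only involves grid values.

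Next come the estimates. Since $\widetilde{\lambda}_h(u_h^n)=Q_h(u_h^n)>0$ by Lemma~\ref{lem:h1-control-by-Q-disc.}, the discrete Hölder inequality with exponents $\frac{p+1}{p-1}$ and $\frac{p+1}{2}$ gives
\[
h\sum_j|u_j^n|^{p-1}|\widetilde{u}_j^{n+1}|^2 \le \|u_h^n\|_{h,p+1}^{p-1}\,\|\widetilde{u}_h^{n+1}\|_{h,p+1}^2 = \|\widetilde{u}_h^{n+1}\|_{h,p+1}^2 .
\]
Condition \eqref{eq:alphan-condition-old} gives $2\alpha - V_j-\omega\ge \alpha\ge \frac12$ at every grid point, and the nonnegative term $\widetilde{\lambda}_h|u_j^n|^{p-1}$ can be dropped. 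Hence
\[
Q_h(\widetilde{u}_h^{n+1})\le Q_h(u_h^n)\|\widetilde{u}_h^{n+1}\|_{h,p+1}^2-\tfrac12|\widetilde{u}_h^{n+1}-u_h^n|_{1,h}^2-\Big(\tfrac12+\tfrac2\tau\Big)\|\widetilde{u}_h^{n+1}-u_h^n\|_h^2 .
\]

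Finally I substitute $\widetilde{u}_h^{n+1}-u_h^n=-\tau\widetilde{\mu}_h^{n+1}$. The right-hand side then becomes $Q_h(u_h^n)\|\widetilde{u}_h^{n+1}\|_{h,p+1}^2-\frac{\tau^2}{2}\|\widetilde{\mu}_h^{n+1}\|_{1,h}^2-2\tau\|\widetilde{\mu}_h^{n+1}\|_h^2$, using $\frac{\tau^2}{2}(|\cdot|_{1,h}^2+\|\cdot\|_h^2)=\frac{\tau^2}{2}\|\cdot\|_{1,h}^2$. Dividing by $\|\widetilde{u}_h^{n+1}\|_{h,p+1}^2$ and using the $2$-homogeneity $Q_h(u_h^{n+1})=Q_h(\widetilde{u}_h^{n+1})/\|\widetilde{u}_h^{n+1}\|_{h,p+1}^2$ yields \eqref{eq:energy-decay-disc.}.

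The main obstacle is verifying the discrete identity in the first step, in particular correctly tracking the $\alpha$- and $\frac1\tau$-weighted squared differences. The approach is to expand $Q_h(\widetilde{u}_h^{n+1}) = Q_h(u_h^n + (\widetilde{u}_h^{n+1}-u_h^n))$ and use the scheme to eliminate the cross term $2\langle\mathcal{A}_h u_h^n,\widetilde{u}_h^{n+1}-u_h^n\rangle_h$, i.e.\ exactly the continuous computation with $\Delta$ replaced by $\mathbf{D}_{xx}$. This works because only symmetry and positive semidefiniteness of $-\mathbf{D}_{xx}$ are used. One must also ensure $\widetilde{u}_h^{n+1}\neq0$ so that the division is legitimate. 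I would argue this as in Lemma~\ref{lem:tilde-u-nplus1-gap-1}, or simply note that the scheme is only well defined when this holds.
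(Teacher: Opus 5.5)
Your proposal is correct and takes the same route as the paper. The paper's proof just says to transcribe Lemma~\ref{lem:energy_decaying}: the identity from \cite{liu2023computing}, H\"older, a lower bound on the coefficient of $|\widetilde u_j^{n+1}-u_j^n|^2$, the substitution $\widetilde u_h^{n+1}-u_h^n=-\tau\widetilde\mu_h^{n+1}$, and $2$-homogeneity of $Q_h$, all with $\langle\cdot,\cdot\rangle_h$ and $\mathbf{D}_{xx}$ in place of the continuous objects. That is exactly what you do.

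Three small fixes are needed.
\begin{enumerate}
\item Condition \eqref{eq:alphan-condition-old} does not give $2\alpha-V_j-\omega\ge\alpha$ in general; it fails for minimal $\alpha$ when $\max_j(V_j+\omega)>1$. It does give $2\alpha-V_j-\omega\ge 1\ge\frac12$, which is all you actually use.
\item Besides symmetry of $\mathbf{D}_{xx}$, the identity needs the cancellation $Q_h(u_h^n)=\widetilde\lambda_h(u_h^n)\langle |u_h^n|^{p-1}u_h^n,u_h^n\rangle_h$. This is where the choice of asymptotic multiplier enters.
\item You should get $\widetilde u_h^{n+1}\neq0$ from your own pre-normalization inequality: setting $\widetilde u_h^{n+1}=0$ there gives $0\le -\frac12|u_h^n|_{1,h}^2-(\frac12+\frac2\tau)\|u_h^n\|_h^2<0$. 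Do not appeal to Lemma~\ref{lem:tilde-u-nplus1-gap-1}, whose proof already uses the energy decay, so that would be circular.
\end{enumerate}
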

\begin{proof}
The proof totally follows the lines of \cite{liu2023computing} and Lemma~\ref{lem:energy_decaying} for the semi-discretization \eqref{eq:GFALM-semi-disc.},  by replacing the continuous inner product and associated functionals with their discrete counterparts. The details are omitted for brevity.
\end{proof}

\subsubsection{Discrete geometry near GS}

To conduct a rigorous local convergence analysis of the fully discrete GFALM scheme \eqref{eq:GFALM-disc.}, we also need to characterize the local geometry structure of the discrete constraint manifold $\mathcal{S}_{h, p+1}$ near $u_{h}^{*}$. We also introduce local coordinates around $u_h^*$, impose a crucial non-degeneracy assumption, and establish an equivalence between the energy error and the $H^1$ error.

Define the constraint functional $G_{h}(u_{h}) := \frac{2}{p+1}\big(\|u_{h}\|_{h,p+1}^{p+1}-1\big)$, where $G_{h}(u_{h})=0$ is equivalent to $u_{h}\in \mathcal S_{h, p+1}$. The associated Lagrangian is
\begin{align}
	L_{h}(u_{h},\lambda) := Q_{h}(u_{h}) - \lambda\,G_{h}(u_{h}), \label{eq:lagrangian-Q-G1-disc.}
\end{align}
and we define the tangent space at $u_{h}\in S_{h, p+1}$ as
\begin{align*}
	\mathcal{T}_{u,h}
	:= \Big\{ \xi_{h} \in X_{h} :\; \mathcal{D}G_{h}(u_{h})[\xi_{h}]
	= 2\left\langle |u_{h}|^{p-1}u_{h}, \xi_{h} \right\rangle_{h} = 0 \Big\}.
\end{align*}
Assume that $\lambda^{*}$ is the corresponding Lagrange multiplier of GS, i.e.,
\begin{align}
	\left( \mathcal{A}_{h}u_{h}^{*} \right)_{j} = \lambda^{*}\, |u^{*}_{j}|^{p-1}u^{*}_{j}.
	\label{eq:euler-lagrange-ug-disc.}
\end{align}
We now introduce a local coordinate  near the GS $u_{h}^{*}$:
\begin{align*}
	\chi_{h}^{}:&\ \mathbb{R}\times \mathcal{T}_{u_{h}^{*}, h}\to X_{h},\quad (r,\xi_{h})\mapsto (1+r)\,u_{h}^{*}+\xi_{h}. 
\end{align*}
Clearly, $\chi_{h}$ is a bijection onto the image, with the inverse 
\begin{align*}
	\chi_{h}^{-1}:\ u_h\in X_{h}\mapsto 
    \bigl(r(u_{h}),\xi_{h}(u_{h})\bigr)
	=\Bigl(\langle |u_{h}^{*}|^{p-1}u_{h}^{*},u_h\rangle_{h}^{} - 1,\,
	u-\langle |u_{h}^{*}|^{p-1}u_{h}^{*},u_h\rangle_{h}^{}\,u_{h}^{*}\Bigr)\in\mathbb{R}\times \mathcal{T}_{u_{h}^{*},h}.
\end{align*}
Then we can establish the following as a discrete analogue of Lemma~\ref{lem:implicit-function-normalization} and Proposition  \ref{prop:coercive-second-variation}.

\begin{lemma}\label{lem:implicit-function-normalization-disc.}
    (i) Under Assumption~\ref{assump:potential-condition}, there exists some $\delta>0$ such that for any $u_{h}\in U_{h,\delta}(u_{h}^{*})= \left\{ u_{h}\in X_{h} \,:\, \|u_{h} - u_{h}^{*}\|_{1,h} \leq \delta \right\}$ admitting the representation $u_{h}=\chi_{h}(r,\xi_{h})$, the implicit relation $\|u\|_{h,p+1}$ $=1$ produces a unique $C^2$ map $ r:\ \xi_{h}\in \mathcal{T}_{u_{h}^{*}, h,\delta}\;\mapsto\;r(\xi_{h})\in\mathbb{R}_{\delta},$ where $\mathcal{T}_{u_{h}^{*},h,\delta} \subset \mathcal{T}_{u_{h}^{*},h}$ and $\mathbb{R}_{\delta}\subset\mathbb{R}$ are neighborhoods of zero, satisfying $r(0)=0$ and
	\begin{align}
		|r(\xi_{h})|\;\lesssim\; \|\xi_{h}\|_{h,p+1}^2 \lesssim \|\xi_{h}\|_{1,h}^{2}. \label{eq:estimate-r}
	\end{align}
    (ii) Under the non-degeneracy condition for some constant $c>0$:
	\begin{align}
		\mathcal{D}_{u}^{2} L_{h}(u_{h}^{*},\lambda^{*})[\xi_{h}, \xi_{h}] \;\ge\; c\,\|\xi_{h}\|_{1,h}^{2},
		\qquad \forall\, \xi_{h} \in \mathcal{T}_{u_{h}^{*},h}, \label{eq:coercivity-second-variation-disc.}
	\end{align}
    there exists a sufficiently small $\delta>0$ such that
	\begin{align}
		Q_{h}(u_{h}) - Q_{h}(u_{h}^{*}) \asymp \|u_{h}-u_{h}^{*}\|_{1,h}^{2}, \quad \forall\, u_{h} \in U_{h,\delta}(u_{h}^{*}) \cap \mathcal{S}_{h,p+1}, \label{eq:energy-gap-quadratic-disc.}
	\end{align}
    and \eqref{eq:ug-definition-disc.} admits $u_{h}^{*}$ as the unique minimizer in $U_{h,\delta}(u_{h}^{*})\cap \mathcal{S}_{h,p+1}$.
\end{lemma}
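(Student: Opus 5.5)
The plan is to transcribe the proofs of Lemma~\ref{lem:implicit-function-normalization} and Proposition~\ref{prop:coercive-second-variation} into the finite-dimensional setting. Continuous integrals become the discrete inner product $\langle\cdot,\cdot\rangle_h$, the $H^1$-norm becomes $\|\cdot\|_{1,h}$, and the Sobolev embedding becomes Lemma~\ref{lem:h1-gap-dx-disc.}(ii). Since $X_h$ is finite-dimensional, every analytic subtlety of the continuous case (for instance the continuity of $u\mapsto|u|^{p-1}$ from Lemma~\ref{lem:f2}) reduces to continuity of elementary functions of finitely many real variables. The implied constants may depend on $h$, but this is harmless: $h$ is fixed, and the constants only need to be independent of $\tau$ and $n$.

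\emph{Part (i).} I will define
\[
F(r,\xi_h):=\|(1+r)u_h^*+\xi_h\|_{h,p+1}^{p+1}-1
\]
on $\mathbb{R}\times\mathcal{T}_{u_h^*,h}$. Since $p>1$, the map $s\mapsto|s|^{p+1}$ is $C^2$, so $F$ is $C^2$. We have $F(0,0)=0$ and
\[
\partial_rF(0,0)=(p+1)\langle|u_h^*|^{p-1}u_h^*,u_h^*\rangle_h=p+1\neq0 .
\]
The implicit function theorem then gives a unique $C^2$ map $r(\xi_h)$ on a neighborhood of zero with $r(0)=0$. Differentiating $F(r(\xi_h),\xi_h)=0$ at $0$ gives $\mathcal{D}r(0)[\eta]=-\langle|u_h^*|^{p-1}u_h^*,\eta\rangle_h=0$ for tangent $\eta$. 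A second-order Taylor expansion with bounded second derivative near $0$ then yields $|r(\xi_h)|\lesssim\|\xi_h\|_{h,p+1}^2$. Lemma~\ref{lem:h1-gap-dx-disc.}(ii) gives $\|\xi_h\|_{h,p+1}\lesssim\|\xi_h\|_{1,h}$, which completes \eqref{eq:estimate-r}. To match the uniqueness statement to the $\delta$-ball $U_{h,\delta}(u_h^*)$, I note that $\chi_h^{-1}$ is a bounded linear map up to a shift. So for $\delta$ small, $u_h\in U_{h,\delta}(u_h^*)$ forces $(r,\xi_h)$ into the neighborhood where the implicit function theorem applies. As in the continuous case, this also gives $\|u_h-u_h^*\|_{1,h}\asymp\|\xi_h\|_{1,h}$.

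\emph{Part (ii).} I will follow the proof of Proposition~\ref{prop:coercive-second-variation} line by line. Write $v_h=u_h-u_h^*$. Symmetry of $\mathcal{A}_h$ and the Euler--Lagrange equation \eqref{eq:euler-lagrange-ug-disc.} give
\[
Q_h(u_h)-Q_h(u_h^*)=\langle\mathcal{A}_hv_h,v_h\rangle_h+2\lambda^*\langle|u_h^*|^{p-1}u_h^*,v_h\rangle_h .
\]
I will Taylor expand $G_h$ at $u_h^*$ with integral remainder. The remainder $\langle(|u_h^*+\rho v_h|^{p-1}-|u_h^*|^{p-1})v_h,v_h\rangle_h$ is $o(\|v_h\|_{h,p+1}^2)$ by discrete H\"older and pointwise continuity of $s\mapsto|s|^{p-1}$ on finitely many grid points. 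Using $G_h(u_h)=G_h(u_h^*)=0$, this produces
\[
Q_h(u_h)-Q_h(u_h^*)=\langle\mathcal{L}_hv_h,v_h\rangle_h+o(\|v_h\|_{1,h}^2),\qquad \mathcal{L}_h:=\mathcal{A}_h-p\lambda^*|u_h^*|^{p-1}.
\]
Since $\mathcal{D}^2_uL_h(u_h^*,\lambda^*)[\xi_h,\xi_h]=2\langle\mathcal{L}_h\xi_h,\xi_h\rangle_h$, the lower bound follows from \eqref{eq:coercivity-second-variation-disc.}. The upper bound $\langle\mathcal{L}_h\xi_h,\xi_h\rangle_h\lesssim\|\xi_h\|_{1,h}^2$ follows from the Cauchy--Schwarz bound for $-\mathbf{D}_{xx}$ stated above together with Lemma~\ref{lem:h1-gap-dx-disc.}(ii). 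I will then decompose $v_h=r u_h^*+\xi_h$ and use the identity $\langle\mathcal{L}_hu_h^*,\xi_h\rangle_h=(1-p)\lambda^*\langle|u_h^*|^{p-1}u_h^*,\xi_h\rangle_h=0$. Combined with $r=\mathcal{O}(\|\xi_h\|_{1,h}^2)$, this gives $\langle\mathcal{L}_hv_h,v_h\rangle_h=\langle\mathcal{L}_h\xi_h,\xi_h\rangle_h+\mathcal{O}(\|\xi_h\|_{1,h}^4)$. Using $\|v_h\|_{1,h}\asymp\|\xi_h\|_{1,h}$, we get \eqref{eq:energy-gap-quadratic-disc.} for $\delta$ small. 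Uniqueness of the minimizer in the ball follows immediately, since the gap is positive whenever $u_h\neq u_h^*$.

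The main point needing care is the $o(\cdot)$ remainder estimate when $p<2$, where $|s|^{p-1}$ is not Lipschitz. In finite dimensions it is resolved by uniform continuity on compact sets: $\max_j\bigl||u_j^*+\rho v_j|^{p-1}-|u_j^*|^{p-1}\bigr|\to0$ as $\|v_h\|_{1,h}\to0$, which suffices. Everything else is routine transcription.
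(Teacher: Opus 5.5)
Your proposal is correct and follows the paper's route: transcribe Lemma~\ref{lem:implicit-function-normalization} and Proposition~\ref{prop:coercive-second-variation} to the grid, and treat the remainder $\langle(|u_h^*+\rho v_h|^{p-1}-|u_h^*|^{p-1})v_h,v_h\rangle_h=o(\|v_h\|_{1,h}^2)$ as the only step needing separate care. The one difference is in that remainder: the paper bounds it quantitatively, splitting into $1<p\le 2$ (using $\bigl||z_1|^{p-1}-|z_2|^{p-1}\bigr|\le|z_1-z_2|^{p-1}$ to get $\lesssim\|v_h\|_{h,p+1}^{p+1}$) and $p>2$ (a mean-value bound giving $\lesssim\|v_h\|_{h,p+1}^{3}$), whereas your uniform-continuity argument over finitely many grid points is qualitative but fully sufficient, since $h$ is fixed and only an $o(\cdot)$ bound is required.
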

\begin{proof}
    Statements (i) and (ii) are the discrete counterparts of Lemma~\ref{lem:implicit-function-normalization} and Proposition~\ref{prop:coercive-second-variation}, respectively. In their proofs, the only estimate that does not directly generalize from the continuous case is the discrete version of \eqref{eq:high-order-esti}:
    \begin{align*}
        \left\langle \left( |u_{h}^{*} + \rho\,v_{h} |^{p-1} - |u_{h}^{*}|^{p-1} \right) v_{h}, v_{h} \right\rangle_{h} = o\left( \left\| v_{h} \right\|_{1,h}^{2} \right).
    \end{align*}
    We discuss this in two cases. First, for $1 < p \leq 2$, we have the inequality
    \begin{align*}
        \left| |z_{1}|^{p-1} - |z_{2}|^{p-1} \right| \leq \left| z_{1} - z_{2} \right|^{p-1}
    \end{align*}
    for any $z_{1}, z_{2} \in \mathbb{R}$. Combining this with H\"older's inequality yields
    \begin{align*}
        \left| \left\langle \left( |u_{h}^{*} + \rho\,v_{h} |^{p-1} - |u_{h}^{*}|^{p-1} \right) v_{h}, v_{h} \right\rangle_{h} \right| 
        &\lesssim  \left\| |u_{h}^{*} + \rho\,v_{h} |^{p-1} - |u_{h}^{*}|^{p-1} \right\|_{h,\frac{p+1}{p-1}} \left\| v_{h} \right\|_{h,p+1}^{2} \\
        &\leq  \left( h\sum_{j\in\Gamma_{h}} \left( \left| \rho\,v_{j} \right|^{p-1} \right)^{\frac{p+1}{p-1}} \right)^{\frac{p-1}{p+1}} \left\| v_{h} \right\|_{h,p+1}^{2} \\
        &\leq  \rho^{p-1} \left\| v_{h} \right\|_{h,p+1}^{p-1} \left\| v_{h} \right\|_{h,p+1}^{2} = o\left( \left\| v_{h} \right\|_{1,h}^{2} \right), \quad \forall\, 0\leq \rho \leq 1.
    \end{align*}
    Second, for $p > 2$, we use the Taylor expansion
    \begin{align*}
        |u_{h}^{*} + \rho\,v_{h} |^{p-1} - |u_{h}^{*}|^{p-1} = \rho\,v_{h} \int_{0}^{1}{ (p-1)|u_{h}^{*} + \theta\rho\,v_{h}|^{p-3}\left( u_{h}^{*} + \theta\rho\,v_{h} \right) }\, \mathrm{d}\theta,
    \end{align*}
    which gives
    \begin{align}
        \left| \left\langle \left( |u_{h}^{*} + \rho\,v_{h} |^{p-1} - |u_{h}^{*}|^{p-1} \right) v_{h}, v_{h} \right\rangle_{h} \right| \lesssim \left\| v_{h} \right\|_{h,p+1}^{3} = o\left( \left\| v_{h} \right\|_{1,h}^{2} \right), \quad \forall\, 0\leq \rho \leq 1. \label{eq:high-order-esti-disc.}
    \end{align}
    Apart from this estimate, the proof follows exactly the same lines as in the continuous case, provided all relevant inner products, norms, and functionals are replaced by their discrete counterparts.
\end{proof}

\subsubsection{Exponential convergence rate}
Now we are ready to present the error estimate result for the fully discrete GFALM scheme \eqref{eq:GFALM-disc.}.

\begin{theorem}\label{thm:exponential-convergence-ground-state-disc.}
	Suppose that Assumptions~\ref{assump:potential-condition}\&\ref{assump:omega-conditions-disc.} hold and that the stabilization parameter $\alpha$ satisfies \eqref{eq:alphan-condition-old}. For any fixed $h$, let $u_{h}^{*}$ be the discrete GS defined by \eqref{eq:ug-definition-disc.} and satisfies the non-degeneracy condition in Lemme~\ref{lem:implicit-function-normalization-disc.}. Then, there exists a constant $\delta_{0}>0$ such that for any initial data $u_{h}^{0}\in U_{\delta_{0}}(u_{h}^{*})\cap \mathcal{S}_{p+1}$ and any $\tau > 0$, the fully discrete GFALM scheme \eqref{eq:GFALM-disc.} converges at the rate:
	\begin{align*}
		\|u_{h}^{n}-u_{h}^{*}\|_{1,h} \le C\,\mathrm{e}^{-a  n \tau/(1+\tau)}, \qquad \forall\, n\in \mathbb{N},
	\end{align*}
    where $C,a>0$ are constants independent of $n,\,\tau$.
\end{theorem}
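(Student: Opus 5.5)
The plan is to transplant the proof of Theorem~\ref{thm:exponential-convergence-ground-state} to the grid, replacing every continuous object by its discrete counterpart: $\langle\cdot,\cdot\rangle_h$, $\|\cdot\|_{1,h}$, $Q_h$ and $\mathcal{A}_h$. For the negative norm, define $\|\mu_h\|_{-1,h}:=\sup_{0\neq v_h\in X_h}\langle \mu_h,v_h\rangle_h/\|v_h\|_{1,h}$. Since $h$ is fixed and $X_h$ is finite dimensional, compactness issues vanish. The only care needed is that the constants in the chain of inequalities are independent of $n$ and $\tau$.

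\textbf{Step 1: uniform bounds.} From Lemma~\ref{lem:energy-decaying-disc.} and Lemma~\ref{lem:h1-control-by-Q-disc.} we get $\|u_h^n\|_{1,h}^2\lesssim Q_h(u_h^0)$. Subtracting $\mu_h^n:=\mathcal{A}_hu_h^n-Q_h(u_h^n)|u_h^n|^{p-1}u_h^n$ from \eqref{eq:GFALM-subb.} gives the discrete resolvent identity
$(1+\tau\alpha)\widetilde{\mu}_h^{n+1}-\frac{\tau}{2}\mathbf{D}_{xx}\widetilde{\mu}_h^{n+1}=\mu_h^n$.
Testing this with $\widetilde{\mu}_h^{n+1}$, and using the Cauchy--Schwarz bound for $-\mathbf{D}_{xx}$ together with the discrete Sobolev embedding of Lemma~\ref{lem:h1-gap-dx-disc.}(ii), yields $\tau\|\widetilde{\mu}_h^{n+1}\|_{1,h}\lesssim1$ and $\|\widetilde{u}_h^{n+1}\|_{1,h}\lesssim 1$. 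This is exactly as in Lemma~\ref{lem:uniform-estimates}. The estimates of Lemma~\ref{lem:esti-by-energe-gap} then follow verbatim from \eqref{eq:energy-decay-disc.}:
$\tau\|\widetilde{\mu}_h^{n+1}\|_{1,h}+\|u_h^{n+1}-u_h^n\|_{1,h}\lesssim\sqrt{Q_h(u_h^n)-Q_h(u_h^{n+1})}$.
Here we use $\big|\|\widetilde u_h^{n+1}\|_{h,p+1}-\|u_h^n\|_{h,p+1}\big|\le\|\widetilde u_h^{n+1}-u_h^n\|_{h,p+1}$.

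\textbf{Step 2: Lyapunov stability.} Lemma~\ref{lem:implicit-function-normalization-disc.}(ii) gives the quadratic growth $c_1\|u_h-u_h^*\|_{1,h}^2\le Q_h(u_h)-Q_h(u_h^*)\le c_2\|u_h-u_h^*\|_{1,h}^2$ on $U_{h,\delta}(u_h^*)\cap\mathcal{S}_{h,p+1}$. Combined with the step bound from Step~1, this allows the contradiction argument of Lemma~\ref{lem:local-stability-ground-state} to be copied word for word. The conclusion is that, for $\delta_0$ small, $u_h^n\in U_{h,\delta}(u_h^*)$ for all $n$.

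\textbf{Step 3: discrete \L{}ojasiewicz inequality.} This is the step I expect to need the most care, although it is still routine. We need $Q_h(u_h)-Q_h(u_h^*)\lesssim\|\mu_h(u_h)\|_{-1,h}^2$ near $u_h^*$. Following Lemma~\ref{lem:Lojasiewicz-Inequality}, set $v_h=u_h-u_h^*$ and expand $\langle\mu_h(u_h),v_h\rangle_h=\langle\mathcal{L}_hv_h,v_h\rangle_h+o(\|v_h\|_{1,h}^2)$, where $\mathcal{L}_h=\mathcal{A}_h-p\lambda^*|u_h^*|^{p-1}$. The remainder of the nonlinearity is controlled precisely by the discrete higher-order estimate already proved in Lemma~\ref{lem:implicit-function-normalization-disc.}, treating the cases $1<p\le2$ and $p>2$ separately. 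The term $(Q_h(u_h)-Q_h(u_h^*))\langle|u_h|^{p-1}u_h,v_h\rangle_h$ is $O(\|v_h\|_{1,h}^3)$.

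The remaining issue is that $v_h$ is not exactly tangent. I would handle this as in \eqref{eq:Qu-Qug-Step3}: write $v_h=r u_h^*+\xi_h$ with $r=O(\|\xi_h\|_{1,h}^2)$ and use $\langle\mathcal{L}_hu_h^*,\xi_h\rangle_h=0$. Then coercivity \eqref{eq:coercivity-second-variation-disc.} gives $\langle\mathcal{L}_hv_h,v_h\rangle_h\ge c\|v_h\|_{1,h}^2-o(\|v_h\|_{1,h}^2)$. Dividing by $\|v_h\|_{1,h}$ and invoking quadratic growth yields the claim.

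\textbf{Step 4: conclusion.} Testing the resolvent identity with an arbitrary $v_h$ gives
$\|\mu_h^n\|_{-1,h}\lesssim\|\widetilde{\mu}_h^{n+1}\|_h+\tau\|\widetilde{\mu}_h^{n+1}\|_{1,h}$.
Hence $\tau\|\mu_h^n\|_{-1,h}^2\lesssim(1+\tau)\big(\tau\|\widetilde\mu_h^{n+1}\|_h^2+\tau^2\|\widetilde\mu_h^{n+1}\|_{1,h}^2\big)\lesssim(1+\tau)\big(Q_h(u_h^n)-Q_h(u_h^{n+1})\big)$ by \eqref{eq:energy-decay-disc.} and the uniform bound on $\|\widetilde u_h^{n+1}\|_{h,p+1}$. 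Combining this with Step~3 gives the contraction
$Q_h(u_h^{n+1})-Q_h(u_h^*)\le(1-a_0\tau/(1+\tau))\big(Q_h(u_h^n)-Q_h(u_h^*)\big)$.
Iterating, bounding $(1-x)^n\le \mathrm{e}^{-nx}$, and applying quadratic growth once more yields the asserted rate with $a=a_0/2$.
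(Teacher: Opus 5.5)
Your proposal is correct and follows essentially the same route as the paper. The paper also proves this theorem by taking the discrete analogues of the uniform bounds, the energy-gap estimates, Lyapunov stability and the \L{}ojasiewicz inequality (the only new ingredient being the remainder estimate \eqref{eq:high-order-esti-disc.}), and then repeating the contraction argument of Theorem~\ref{thm:exponential-convergence-ground-state}; your explicit splitting $v_h=ru_h^*+\xi_h$ with $\langle\mathcal{L}_hu_h^*,\xi_h\rangle_h=0$, used to apply coercivity to the non-tangent error, is a detail the paper leaves implicit in its \L{}ojasiewicz step, and you handle it correctly.
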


We then turn to the proof of Theorem \ref{thm:exponential-convergence-ground-state-disc.}. Denote
$\mu_{j}^{n} = -\frac{1}{2}\left( \mathbf{D}_{xx}u_{h}^{n} \right)_{j} + ( V_{j} + \omega - \widetilde{\lambda}(u_{h}^{n})\left| u_{j}^{n} \right|^{p-1} )u_{j}^{n}$, then similarly as \eqref{eq:ellipse-equation-mu}, we can obtain the elliptic resolvent equation
\begin{align}
	\left( 1 + \tau\alpha \right) \widetilde{\mu}_{j}^{n+1} - \frac{\tau}{2}\left( \mathbf{D}_{xx}\widetilde{\mu}_{h}^{n+1} \right)_{j} =  \mu_{j}^{n}. \label{eq:ellipse-equation-mu-disc.}
\end{align}
With the discrete $H^{-1}$-norm defined as 
\begin{align*}
	\|u_{h}\|_{-1,h} = \sup_{0\neq v_{h} \in X_{h}} \frac{\left| \left\langle u_{h}, v_{h} \right\rangle_{h} \right|}{\|v_{h}\|_{1,h}}, \quad \forall\, u_{h} \in X_{h},
\end{align*}
we can then derive the following estimates of the numerical solution.

\begin{lemma}\label{lem:uniform-estimates-disc.}
    Under Assumptions \ref{assump:potential-condition}\&\ref{assump:omega-conditions-disc.}, and with $\alpha$ satisfying \eqref{eq:alphan-condition-old}, for the fully discrete GFALM scheme \eqref{eq:GFALM-disc.}, the following  hold:
    \begin{align*}
        &\|u_h^n\|_{1,h}\lesssim 1,\quad 
        \tau\|\widetilde{\mu}_h^{n+1}\|_{1,h}\lesssim 1,\quad
        \|\widetilde{u}_h^{n+1}\|_{1,h}\lesssim 1, \quad
        \|\widetilde{u}_{h}^{n+1} \|_{h,p+1}^{-1}\lesssim 1, 
        \quad \forall n\in\mathbb{N}.
    \end{align*}
\end{lemma}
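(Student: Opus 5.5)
The plan is to transcribe the proofs of Lemma~\ref{lem:uniform-estimates} and Lemma~\ref{lem:tilde-u-nplus1-gap-1} to the grid, using the discrete tools already available: Lemma~\ref{lem:h1-control-by-Q-disc.}, the energy decay of Lemma~\ref{lem:energy-decaying-disc.}, the discrete Sobolev embedding of Lemma~\ref{lem:h1-gap-dx-disc.}(ii), and the resolvent identity \eqref{eq:ellipse-equation-mu-disc.}. All constants must depend only on $Q_h(u_h^0)$, $\omega$, $\lambda_0'$, $\|V\|_{L^\infty}$, $p$ and the embedding constant, and not on $n$ or $\tau$.

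\textbf{Step 1: bound on $u_h^n$.} Since $\|u_h^n\|_{h,p+1}=1$, the energy decay \eqref{eq:energy-decay-disc.} gives $Q_h(u_h^n)\le Q_h(u_h^0)$ for all $n$. Lemma~\ref{lem:h1-control-by-Q-disc.} then yields $C_0'\|u_h^n\|_{1,h}^2\le Q_h(u_h^0)$, hence $\|u_h^n\|_{1,h}\lesssim1$.

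\textbf{Step 2: bound on $\tau\widetilde\mu_h^{n+1}$.} Take the discrete inner product of \eqref{eq:ellipse-equation-mu-disc.} with $\widetilde\mu_h^{n+1}$. The left side is $(1+\tau\alpha)\|\widetilde\mu_h^{n+1}\|_h^2+\frac\tau2|\widetilde\mu_h^{n+1}|_{1,h}^2$, which is at least $c\,\tau\|\widetilde\mu_h^{n+1}\|_{1,h}^2$ because $\alpha\ge1/2$.

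\begin{itemize}
\item For the right side $\langle\mu_h^n,\widetilde\mu_h^{n+1}\rangle_h$, bound the $\mathbf{D}_{xx}$ term by the Cauchy--Schwarz inequality for $(-\mathbf{D}_{xx})^{1/2}$ stated before Lemma~\ref{lem:h1-gap-dx-disc.}.
\item Bound the potential term using $|V_j+\omega|\lesssim1$.
\item Bound the nonlinear term by H\"older's inequality: $Q_h(u_h^n)\,\|u_h^n\|_{h,p+1}^p\,\|\widetilde\mu_h^{n+1}\|_{h,p+1}\lesssim\|\widetilde\mu_h^{n+1}\|_{1,h}$, using Lemma~\ref{lem:h1-gap-dx-disc.}(ii).
\end{itemize}

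Together these give $\tau\|\widetilde\mu_h^{n+1}\|_{1,h}^2\lesssim\|\widetilde\mu_h^{n+1}\|_{1,h}$, i.e.\ $\tau\|\widetilde\mu_h^{n+1}\|_{1,h}\lesssim1$. The triangle inequality applied to $\widetilde u_h^{n+1}=u_h^n-\tau\widetilde\mu_h^{n+1}$ then gives $\|\widetilde u_h^{n+1}\|_{1,h}\lesssim1$.

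\textbf{Step 3: lower bound on $\|\widetilde u_h^{n+1}\|_{h,p+1}$.} This is the only step needing care, because the energy decay estimate itself contains $\|\widetilde u_h^{n+1}\|_{h,p+1}^2$ in its denominator. Mirror Lemma~\ref{lem:tilde-u-nplus1-gap-1}:

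1. The triangle inequality in $\|\cdot\|_{h,p+1}$ gives $1=\|u_h^n\|_{h,p+1}\le\|\widetilde u_h^{n+1}\|_{h,p+1}+\tau\|\widetilde\mu_h^{n+1}\|_{h,p+1}$.
2. By Lemma~\ref{lem:h1-gap-dx-disc.}(ii) and \eqref{eq:energy-decay-disc.}, $\tau\|\widetilde\mu_h^{n+1}\|_{h,p+1}\lesssim\tau\|\widetilde\mu_h^{n+1}\|_{1,h}\le\sqrt2\,\|\widetilde u_h^{n+1}\|_{h,p+1}\sqrt{Q_h(u_h^n)-Q_h(u_h^{n+1})}$.
3. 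Combining, $1\lesssim\|\widetilde u_h^{n+1}\|_{h,p+1}\bigl(1+\sqrt{Q_h(u_h^0)}\bigr)$, using $0\le Q_h(u_h^{n+1})\le Q_h(u_h^n)\le Q_h(u_h^0)$.

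This gives $\|\widetilde u_h^{n+1}\|_{h,p+1}^{-1}\lesssim1$ uniformly in $n$ and $\tau$.

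The main thing to check is that the discrete Sobolev constant in Lemma~\ref{lem:h1-gap-dx-disc.}(ii) is uniform. Since $h$ is fixed throughout Theorem~\ref{thm:exponential-convergence-ground-state-disc.}, this is harmless in any case.
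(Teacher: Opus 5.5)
Your proposal is correct and follows the paper's own route. The paper omits this proof, saying only that the lemma is the discrete analogue of Lemma~\ref{lem:uniform-estimates} and Lemma~\ref{lem:tilde-u-nplus1-gap-1} and is proved by the same logic; your transcription supplies exactly that argument: discrete coercivity plus energy decay, then the resolvent identity \eqref{eq:ellipse-equation-mu-disc.} tested with $\widetilde\mu_h^{n+1}$, then the $\|\cdot\|_{h,p+1}$ triangle inequality combined with \eqref{eq:energy-decay-disc.}.
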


\begin{lemma}\label{lem:esti-by-energe-gap-disc.}
 Assume that the conditions of Theorem~\ref{thm:exponential-convergence-ground-state-disc.} hold. For the solution of \eqref{eq:GFALM-disc.}, we have
    \begin{align*}
    	\tau\left\| \widetilde{\mu}_{h}^{n+1} \right\|_{1,h} \lesssim  \sqrt{ Q_{h}(u_{h}^{n}) - Q_{h}(u_{h}^{n+1}) }, \quad \left\| u_{h}^{n+1} - u_{h}^{n} \right\|_{1,h} \lesssim \sqrt{ Q_{h}(u_{h}^{n}) - Q_{h}(u_{h}^{n+1}) }.
    \end{align*}
\end{lemma}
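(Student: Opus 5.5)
The plan is to mirror the proof of Lemma~\ref{lem:esti-by-energe-gap}, replacing every continuous ingredient by its discrete counterpart. First I would bound $\tau\|\widetilde{\mu}_h^{n+1}\|_{1,h}$. Dropping the nonnegative term $4\tau\|\widetilde{\mu}_h^{n+1}\|_h^2$ in the discrete energy stability \eqref{eq:energy-decay-disc.} of Lemma~\ref{lem:energy-decaying-disc.} gives
\begin{align*}
\tau^2\|\widetilde{\mu}_h^{n+1}\|_{1,h}^2\le 2\|\widetilde{u}_h^{n+1}\|_{h,p+1}^2\big(Q_h(u_h^n)-Q_h(u_h^{n+1})\big).
\end{align*}
By the discrete Sobolev embedding in Lemma~\ref{lem:h1-gap-dx-disc.}(ii), $\|\widetilde{u}_h^{n+1}\|_{h,p+1}\lesssim\|\widetilde{u}_h^{n+1}\|_{1,h}$. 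This quantity is uniformly bounded by Lemma~\ref{lem:uniform-estimates-disc.}. Taking square roots then yields the first estimate.

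For the second estimate I would split
\begin{align*}
u_h^{n+1}-u_h^n=(u_h^{n+1}-\widetilde{u}_h^{n+1})+(\widetilde{u}_h^{n+1}-u_h^n).
\end{align*}
The second piece equals $-\tau\widetilde{\mu}_h^{n+1}$ by \eqref{eq:GFALM-suba.}, so it is controlled by the first estimate. For the first piece, the normalization \eqref{eq:GFALM-subc.} gives
\begin{align*}
u_h^{n+1}-\widetilde{u}_h^{n+1}=u_h^{n+1}\big(1-\|\widetilde{u}_h^{n+1}\|_{h,p+1}\big).
\end{align*}
Since $\|u_h^n\|_{h,p+1}=1$, the reverse triangle inequality for the discrete $L^{p+1}$ norm gives
\begin{align*}
\big|1-\|\widetilde{u}_h^{n+1}\|_{h,p+1}\big|\le\|\widetilde{u}_h^{n+1}-u_h^n\|_{h,p+1}\lesssim\|\widetilde{u}_h^{n+1}-u_h^n\|_{1,h}=\tau\|\widetilde{\mu}_h^{n+1}\|_{1,h},
\end{align*}
where the middle inequality again uses Lemma~\ref{lem:h1-gap-dx-disc.}(ii). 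Combining this with the uniform bound $\|u_h^{n+1}\|_{1,h}\lesssim1$ from Lemma~\ref{lem:uniform-estimates-disc.} controls the first piece by the same square root of the energy drop.

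The argument is routine. The only points needing care are that the hidden constants are independent of $n$ and $\tau$, and that Lemma~\ref{lem:uniform-estimates-disc.} is indeed available. If that lemma still has to be justified, I would prove it exactly as Lemmas~\ref{lem:uniform-estimates} and \ref{lem:tilde-u-nplus1-gap-1}. The $\|\cdot\|_{1,h}$ bound on $u_h^n$ follows from Lemma~\ref{lem:h1-control-by-Q-disc.} together with energy decay. The bound on $\tau\widetilde{\mu}_h^{n+1}$ follows by testing the resolvent equation \eqref{eq:ellipse-equation-mu-disc.} with $\widetilde{\mu}_h^{n+1}$ and using the bound $|\langle-\mathbf{D}_{xx}u_h,v_h\rangle_h|\le|u_h|_{1,h}|v_h|_{1,h}$. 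The lower bound on $\|\widetilde{u}_h^{n+1}\|_{h,p+1}$ follows from the triangle inequality and the energy-drop estimate. These bounds hold uniformly for fixed $h$.
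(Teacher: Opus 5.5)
Your proposal is correct and takes essentially the same route as the paper. The paper obtains this lemma by transcribing the proof of Lemma~\ref{lem:esti-by-energe-gap} to the discrete setting: drop the $L^2$ term in \eqref{eq:energy-decay-disc.} and invoke the discrete Sobolev embedding together with the uniform bounds of Lemma~\ref{lem:uniform-estimates-disc.}, then split $u_h^{n+1}-u_h^n$ through $\widetilde{u}_h^{n+1}$ and control the normalization factor by the reverse triangle inequality using $\|u_h^n\|_{h,p+1}=1$.
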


\begin{lemma}[Discrete Lyapunov stability]\label{lem:local-stability-ground-state-disc.}
	Let the assumptions in Theorem~\ref{thm:exponential-convergence-ground-state-disc.} hold and let $\delta>0$ be a sufficiently small constant that meets the requirements of Lemma~\ref{lem:implicit-function-normalization-disc.}. Then, there exist $\delta_{0}>0$ such that for all $u_{h}^{0} \in U_{h,\delta_{0}}(u_{h}^{*})\cap \mathcal{S}_{h,p+1}$, the solution of \eqref{eq:GFALM-disc.} with initial guess $u_{h}^{0}$ satisfies
	\begin{align*}
		\|u_{h}^{n}-u_{h}^{*}\|_{1,h} \leq \delta, \qquad \forall\, n\in \mathbb{N}.
	\end{align*}
\end{lemma}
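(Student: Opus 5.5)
The plan is to mirror the proof of Lemma~\ref{lem:local-stability-ground-state}, replacing each continuous ingredient by its discrete counterpart. First, I will use the quadratic growth \eqref{eq:energy-gap-quadratic-disc.} from Lemma~\ref{lem:implicit-function-normalization-disc.}(ii). For the chosen $\delta$ it yields constants $0<c_1\le c_2$ such that
\begin{align*}
c_1\|u_h-u_h^*\|_{1,h}^2\le Q_h(u_h)-Q_h(u_h^*)\le c_2\|u_h-u_h^*\|_{1,h}^2,\quad \forall\,u_h\in U_{h,\delta}(u_h^*)\cap\mathcal{S}_{h,p+1}.
\end{align*}

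Second, I will bound the step size uniformly. Lemma~\ref{lem:energy-decaying-disc.} gives the monotonicity $Q_h(u_h^*)\le Q_h(u_h^{n+1})\le Q_h(u_h^n)\le Q_h(u_h^0)$; the lower bound holds because $u_h^*$ is the global minimizer on $\mathcal{S}_{h,p+1}$ and every iterate lies in $\mathcal{S}_{h,p+1}$. Combined with Lemma~\ref{lem:esti-by-energe-gap-disc.}, this gives
\begin{align*}
\|u_h^{n+1}-u_h^n\|_{1,h}\lesssim\sqrt{Q_h(u_h^n)-Q_h(u_h^{n+1})}\le\sqrt{Q_h(u_h^0)-Q_h(u_h^*)}\le\sqrt{c_2}\,\|u_h^0-u_h^*\|_{1,h}.
\end{align*}
So there is $c_3>0$, independent of $n$ and $\tau$, with $\|u_h^{n+1}-u_h^n\|_{1,h}\le c_3\|u_h^0-u_h^*\|_{1,h}$. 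The implicit constants come from Lemma~\ref{lem:uniform-estimates-disc.}, in particular the bounds $\|\widetilde u_h^{n+1}\|_{1,h}\lesssim1$ and $\|\widetilde u_h^{n+1}\|_{h,p+1}^{-1}\lesssim1$, which are what make Lemma~\ref{lem:esti-by-energe-gap-disc.} hold.

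Third, I fix $\delta_0<\frac{\delta}{2}\min\{\sqrt{c_1/c_2},\,1/c_3\}$ and argue by contradiction. Suppose some $n_1$ has $\|u_h^{n_1}-u_h^*\|_{1,h}>\delta$. Since each step has size at most $c_3\delta_0<\delta/2$, and the iterates start inside $U_{h,\delta_0}$, the sequence cannot jump over the annulus between radii $\delta/2$ and $\delta$. Hence there is a first index $n_2\in(0,n_1)$ with $\delta/2<\|u_h^{n_2}-u_h^*\|_{1,h}\le\delta$, and $u_h^{n_2}$ still lies in the region where quadratic growth applies. Then
\begin{align*}
Q_h(u_h^{n_2})-Q_h(u_h^*)>c_1\delta^2/4>c_2\delta_0^2\ge Q_h(u_h^0)-Q_h(u_h^*),
\end{align*}
which contradicts the energy decay.

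The main obstacle is the uniform lower bound $\|\widetilde u_h^{n+1}\|_{h,p+1}\gtrsim1$ inside Lemma~\ref{lem:esti-by-energe-gap-disc.} and Lemma~\ref{lem:uniform-estimates-disc.}, together with checking that all constants are independent of $\tau$. I will take both lemmas as given, since they are stated earlier. If they needed justification, I would reproduce the argument of Lemma~\ref{lem:tilde-u-nplus1-gap-1}, using the discrete Sobolev embedding of Lemma~\ref{lem:h1-gap-dx-disc.}(ii) and the coercivity of Lemma~\ref{lem:h1-control-by-Q-disc.}. Because $h$ is fixed, the dependence of constants on $h$ causes no difficulty.
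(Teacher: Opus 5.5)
Your proposal is correct and is essentially the argument the paper relies on, since it defers to the proof of Lemma~\ref{lem:local-stability-ground-state}: quadratic growth constants $c_1,c_2$, the uniform step bound $\|u_h^{n+1}-u_h^n\|_{1,h}\le c_3\|u_h^0-u_h^*\|_{1,h}$, the choice $\delta_0<\frac{\delta}{2}\min\{\sqrt{c_1/c_2},1/c_3\}$, and a contradiction with energy decay at an iterate in the annulus $\delta/2<\|u_h^{n}-u_h^*\|_{1,h}\le\delta$. One minor point: the step-size estimate of Lemma~\ref{lem:esti-by-energe-gap-disc.} needs only the upper bound $\|\widetilde u_h^{n+1}\|_{h,p+1}\lesssim\|\widetilde u_h^{n+1}\|_{1,h}\lesssim 1$, not the lower bound $\|\widetilde u_h^{n+1}\|_{h,p+1}\gtrsim 1$ that you single out as the main obstacle.
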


\begin{lemma}[Discrete \L{}ojasiewicz-type gradient inequality]\label{lem:Lojasiewicz-Gradient-Inequality-disc.}
	Under the conditions of Theorem~\ref{thm:exponential-convergence-ground-state-disc.}, there exist some $\delta_{0}>0$ such that for all $u_{h}^{0}\in U_{h,\delta_{0}}(u_{h}^{*})\cap \mathcal{S}_{h,p+1}$,
	\begin{align}
		Q_{h}(u_{h}^{n}) - Q_{h}(u_{h}^{*}) \;\le\; C_{Lo}\,\|\mu_{h}^{n}\|_{-1,h}^{2},
		\qquad \forall\, n\in\mathbb{N}. \label{eq:Lojasiewicz-Gradient-Inequality-disc.}
	\end{align}
	where $0<C_{Lo}<\infty$ is a constant independent of $n,\tau$.
\end{lemma}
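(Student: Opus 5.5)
We prove Lemma~\ref{lem:Lojasiewicz-Gradient-Inequality-disc.} by mirroring Lemmas~\ref{lem:Lojasiewicz-Inequality} and \ref{lem:Lojasiewicz-Gradient-Inequality}. Lemma~\ref{lem:local-stability-ground-state-disc.} supplies a $\delta_0>0$ such that all iterates stay in $U_{h,\delta}(u_h^*)\cap\mathcal{S}_{h,p+1}$, with $\delta$ as small as Lemma~\ref{lem:implicit-function-normalization-disc.} and the argument below require. It therefore suffices to prove the static inequality
\begin{align*}
Q_h(u_h)-Q_h(u_h^*)\lesssim \|\mu_h(u_h)\|_{-1,h}^2,\qquad \forall\,u_h\in U_{h,\delta}(u_h^*)\cap\mathcal{S}_{h,p+1},
\end{align*}
where $\mu_h(u_h):=\mathcal{A}_hu_h-Q_h(u_h)|u_h|^{p-1}u_h$. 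Then we apply it to $u_h=u_h^n$, noting that $\mu_h^n=\mu_h(u_h^n)$ because $\widetilde{\lambda}_h(u_h^n)=Q_h(u_h^n)$ on $\mathcal{S}_{h,p+1}$. The constant we obtain depends only on $u_h^*$, $c$, $\lambda^*$ and the embedding constants, so it is independent of $n$ and $\tau$.

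For the static inequality, set $v_h:=u_h-u_h^*$. Since $\mu_h(u_h^*)=0$ by \eqref{eq:euler-lagrange-ug-disc.} and $\lambda^*=Q_h(u_h^*)$, we get
\begin{align*}
\langle\mu_h(u_h),v_h\rangle_h=\langle\mathcal{A}_hv_h,v_h\rangle_h-\bigl(Q_h(u_h)-Q_h(u_h^*)\bigr)\langle|u_h|^{p-1}u_h,v_h\rangle_h-\lambda^*\langle|u_h|^{p-1}u_h-|u_h^*|^{p-1}u_h^*,v_h\rangle_h .
\end{align*}
We bound each term on the right as follows.
\begin{itemize}
\item \emph{Middle term.} Using \eqref{eq:energy-gap-quadratic-disc.} together with H\"older's inequality and the discrete Sobolev embedding (Lemma~\ref{lem:h1-gap-dx-disc.}(ii)), this term is $\mathcal{O}(\|v_h\|_{1,h}^3)$.
\item \emph{Last term.} Writing the nonlinear difference via the integral $\int_0^1 p|u_h^*+\rho v_h|^{p-1}\,\mathrm{d}\rho$, it splits as $p\langle|u_h^*|^{p-1}v_h,v_h\rangle_h$ plus a remainder. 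That remainder is exactly the quantity estimated in the proof of Lemma~\ref{lem:implicit-function-normalization-disc.}, treated there in the two cases $1<p\le2$ and $p>2$, so it is $o(\|v_h\|_{1,h}^2)$.
\end{itemize}
Combining these gives $\langle\mu_h(u_h),v_h\rangle_h=\langle\mathcal{L}_hv_h,v_h\rangle_h+o(\|v_h\|_{1,h}^2)$, where $\mathcal{L}_h:=\mathcal{A}_h-p\lambda^*|u_h^*|^{p-1}$.

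The main step is to obtain coercivity of $\mathcal{L}_h$ on $v_h$, which is not itself tangent. We decompose $v_h=r(\xi_h)u_h^*+\xi_h$ with $\xi_h\in\mathcal{T}_{u_h^*,h}$ and $|r|\lesssim\|\xi_h\|_{1,h}^2$ by Lemma~\ref{lem:implicit-function-normalization-disc.}(i). We also use $\langle\mathcal{L}_hu_h^*,\xi_h\rangle_h=(1-p)\lambda^*\langle|u_h^*|^{p-1}u_h^*,\xi_h\rangle_h=0$. As in \eqref{eq:Qu-Qug-Step3}, these give $\langle\mathcal{L}_hv_h,v_h\rangle_h=\langle\mathcal{L}_h\xi_h,\xi_h\rangle_h+\mathcal{O}(\|\xi_h\|_{1,h}^4)$. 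The non-degeneracy condition \eqref{eq:coercivity-second-variation-disc.} reads $2\langle\mathcal{L}_h\xi_h,\xi_h\rangle_h\ge c\|\xi_h\|_{1,h}^2$, and the discrete analogue of \eqref{eq:u-ug-approx-xi} gives $\|\xi_h\|_{1,h}\asymp\|v_h\|_{1,h}$. Hence $\langle\mu_h(u_h),v_h\rangle_h\ge \frac{c'}{2}\|v_h\|_{1,h}^2$ once $\delta$ is small.

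Finally, by the definition of $\|\cdot\|_{-1,h}$ we have $\|\mu_h(u_h)\|_{-1,h}\ge\langle\mu_h(u_h),v_h\rangle_h/\|v_h\|_{1,h}\gtrsim\|v_h\|_{1,h}$. The upper half of \eqref{eq:energy-gap-quadratic-disc.} gives $Q_h(u_h)-Q_h(u_h^*)\lesssim\|v_h\|_{1,h}^2\lesssim\|\mu_h(u_h)\|_{-1,h}^2$. This proves the static inequality, and \eqref{eq:Lojasiewicz-Gradient-Inequality-disc.} follows for every $n$.

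The only delicate point is the $o(\|v_h\|_{1,h}^2)$ control of the nonlinear remainder, and that is already provided by the proof of Lemma~\ref{lem:implicit-function-normalization-disc.}. We are working in finite dimensions at fixed $h$, so all these limits are uniform in $n$ and $\tau$.
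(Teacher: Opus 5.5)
Your proposal is correct and takes essentially the paper's approach. The paper obtains this lemma as the discrete analogue of Lemma~\ref{lem:Lojasiewicz-Inequality} combined with the discrete Lyapunov stability, with the nonlinear remainder controlled by \eqref{eq:high-order-esti-disc.} and its $1<p\le 2$ counterpart. Your explicit reduction of $\langle\mathcal{L}_h v_h,v_h\rangle_h$ to the tangent component $\xi_h$, as in \eqref{eq:Qu-Qug-Step3}, adds a step the paper's continuous argument leaves implicit: it applies the tangent-space coercivity directly to $v=u-u_g$, which is not itself a tangent vector.
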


Lemmas \ref{lem:uniform-estimates-disc.}--\ref{lem:Lojasiewicz-Gradient-Inequality-disc.} are the discrete analogues of Lemmas \ref{lem:uniform-estimates}, \ref{lem:tilde-u-nplus1-gap-1}, \ref{lem:esti-by-energe-gap}, \ref{lem:local-stability-ground-state}, and \ref{lem:Lojasiewicz-Gradient-Inequality}, respectively. Their proofs follow the same logic as in the continuous case, with the only exception being Lemma \ref{lem:Lojasiewicz-Gradient-Inequality-disc.} which requires the estimate \eqref{eq:high-order-esti-disc.}. The subsequent proof of Theorem \ref{thm:exponential-convergence-ground-state-disc.} essentially follows the same arguments as in the proof of Theorem \ref{thm:exponential-convergence-ground-state}. For the sake of brevity, these proofs are omitted.

\subsection{Numerical Verification}

Now we give some numerical examples to demonstrate the practical convergence behavior of \eqref{eq:GFALM-disc.} with $\alpha$ selected as the minimum value satisfying the condition in Lemma~\ref{lem:energy-decaying-disc.}.

\begin{example}[Soliton in 1D]\label{ex:1}
	Letting $d=1$, $V=0$, $\beta=-1$, and $p=3$ in \eqref{eq:semilinear-elliptic-rot} yields
	\begin{align}
		\frac{1}{2}\partial_{xx}\phi(x) + |\phi(x)|^{2}\phi(x) = \omega \phi(x), \quad x\in \mathbb{R}.  \label{eq:example1}
	\end{align}
    Its GS (up to a translation) is the soliton solution explicitly given as
    \begin{align*}
    	\phi^{*}(x) = \sqrt{2\omega}\,\mathrm{sech}(\sqrt{2\omega}\,x), \quad u^{*}(x) = \phi^{*}(x)/\|\phi^{*}\|_{L^{p+1}}, \quad  x\in \mathbb{R}.
    \end{align*}
    We take $\omega=1$, set the computational domain $\Omega=(-32,32)$, and use a spatial grid of size $M=512$. The initial data for \eqref{eq:GFALM-disc.} is chosen as $u^0_h=\mathcal{I}_{h}\phi_0/\|\phi_0\|_{h,p+1}$ with $\phi_{0}(x) = \mathrm{e}^{-x^{2}/2}$, and the temporal iteration is stopped when the maximal residual of $\widetilde{\mu}^{n+1}$ falls below $10^{-11}$, i.e., $\|\widetilde{\mu}^{n+1}\|_{L^{\infty}} < 10^{-11}.$  
\end{example}

Figure~\ref{fig:ex-loglog-err}(a) displays the evolution of the error $\|u_{h}^{n} - \mathcal{I}_{h}u^{*}\|_{H^{1}}$ with respect to the iteration index $n$ for step sizes $\tau=1,\,0.5,\,0.2,\,0.1$. The curves exhibit a clear exponential decay in $n$.

\begin{figure}[!ht]
	\centering
	\includegraphics[width=0.48\textwidth]{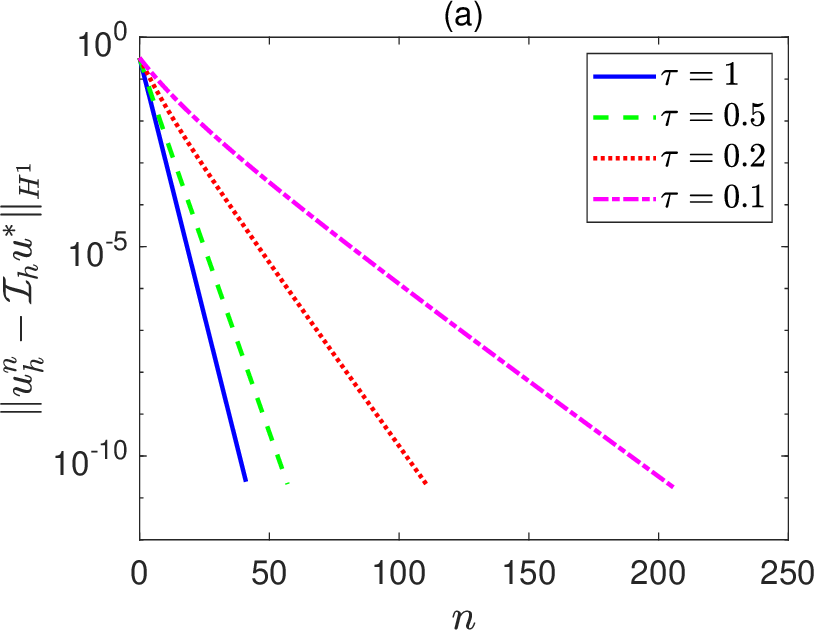} 
	\includegraphics[width=0.48\textwidth]{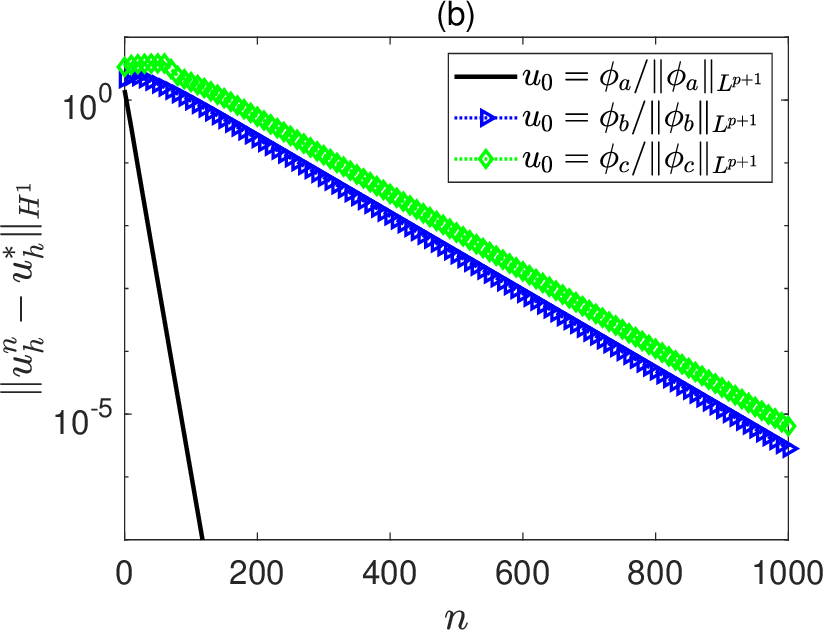}
	\caption{Error of GFALM \eqref{eq:GFALM-disc.} along iteration (on logarithmic scale). (a): $\|u_{h}^{n} - \mathcal{I}_{h}u^{*}\|_{H^{1}}$ under different $\tau$ in Example~\ref{ex:1} (a); (b): $\|u_{h}^{n} - u_{h}^{*}\|_{H^{1}}$ under different initial data $u_{0}$ in Example~\ref{ex:2}.}
	\label{fig:ex-loglog-err}
\end{figure}

\begin{example}[General 2D GS]\label{ex:2}
	Take $d=2$, $\beta=-1$, $p=3$, $\omega=1$, and $V(\mathbf{x})=\left( \gamma_1^{2}x_{1}^{2} + \gamma_2^{2}x_{2}^{2} \right)/2$ with $\gamma_1 = \gamma_2 = 1$ in \eqref{eq:semilinear-elliptic-rot}. The computational domain is taken as $\Omega = [-4,4]\times[-4,4]$ with a spatial grid of size $128\times 128$. The explicit formula of the exact GS is not available in this case and we consider three different initial functions for \eqref{eq:GFALM-disc.}: the $L^{p+1}$-normalizations of 
    \begin{align*}
    	\phi_{a}(x_{1},x_{2}) = \sqrt{\gamma_1\gamma_2/\pi}\,\mathrm{e}^{-(\gamma_1^{2}x_{1}^{2} + \gamma_2^{2}x_{2}^{2})/2},   \quad \phi_{b}(x_{1},x_{2}) = \phi_{a}(x_{1}-1,x_{2}), \quad \phi_{c} = (x_{1}+{\rm i}x_{2})\phi_{a}.
    \end{align*}
    Note that $\phi_{a}$ is real-valued and axisymmetric, while $\phi_{b}$ and $\phi_{c}$ are non-axisymmetric real/complex-valued. Using $\phi_a$, we generate a high-fidelity reference GS $u_{h}^{*}$ by evolving the scheme \eqref{eq:GFALM-disc.} under a small time step $\tau=0.01$ to the final time $t=100$.
\end{example}

Figure~\ref{fig:ex-loglog-err}(b) reports the evolution of the error $\|u_h^{n}-u_h^{*}\|_{H^{1}}$ under $\tau = 0.1$. It is observed that, across all tested initial data (and some more not shown for brevity), the solution of GFALM \eqref{eq:GFALM-disc.} converges towards the same accumulation point at the exponential rate as $n$ increases. The above numerical results verify both the global convergence in Theorem \ref{thm:convergent-subsequences} and the exponential convergence rate of GFALM \eqref{eq:GFALM-disc.}.

\subsection{Discussion and perspective}
We end this section by making some remarks on the presented exponential convergence results, i.e., Theorems \ref{thm:exponential-convergence-ground-state}\&\ref{thm:exponential-convergence-ground-state-disc.}, for the mathematical assumptions and possible generalizations. 
\begin{itemize}
    \item[i)] \emph{Complex-valued case:}  
    The real-valued setup considered in the above analysis can indeed provide the non-degeneracy assumed in Proposition~\ref{prop:coercive-second-variation}. In the complex-valued setting, continuous symmetries typically generate neutral directions (zero modes), so strict coercivity may fail. For instance, the phase invariance $u \mapsto e^{\mathrm{i}\theta}u$ leaves the energy unchanged, making $u_{g}$ a local minimizer only modulo phase. Apart from the non-degeneracy assumption, the core estimates and convergence arguments above remain valid in complex-valued contexts including the rotating case \cite{liu2023computing,liu2025action}. Consequently, provided that Proposition~\ref{prop:coercive-second-variation} holds in the target setting (for example, by imposing symmetry constraints or by establishing coercivity on the orthogonal complement of the zero modes), the exponential convergence result as well as the proof can be extended without essential difficulties.

   \item[ii)] \emph{Global exponential convergence:} 
   The GFALM starting from different regions of the constraint manifold $\mathcal{S}_{p+1}$ may converge to distinct critical points. A central open question is whether, for any given initial guess, the GFALM sequence necessarily converges to a critical point of $Q$ on $\mathcal{S}_{p+1}$. A feasible analytical route is to build on the subsequential convergence results established in Section~\ref{sec.2.2}, which, under mild assumptions, guarantee the existence of at least one convergent subsequence whose limit is a critical point. If, in addition, one can establish a (global) \L{}ojasiewicz-type gradient inequality valid on a relatively large subregion of $\mathcal{S}_{p+1}$, or ensure the critical point possesses similar non-degenerate conditions, then it may be possible to upgrade “subsequence convergence” to convergence of the full sequence, thereby yielding a comprehensive global convergence theory. 

    \item[iii)] \emph{Prior error estimate:} 
    The discrete GS $u^*_h$ within discussion in general differs from the exact GS $u^*$ of \eqref{eq:ug-definition}. As numerically tested on the one-dimensional soliton example with analytical formula \cite{liu2023computing}, their difference exhibits spectral convergence to zero under a vanishing spatial mesh size. To mathematically justify this, one would need an independent prior error estimate between $u^*_h$ and $u^*$, which combining with Theorem~\ref{thm:exponential-convergence-ground-state-disc.} can form the complete error analysis  of GFALM. Such prior estimates exist for the mass constrained energy GS problem, e.g., \cite{cances2010numerical, zhou2003analysis}. However, the direct application of existing frameworks to our problem also  faces fundamental obstacles from the lack of an inner product in $L^{p+1}$. Addressing the issues likely requires refined nonconvex constrained optimization tools which is beyond the scope of this paper and will be addressed in a future work.
\end{itemize}

\section{On original continuous gradient flow}\label{sec:convergence-nGF}

Note that the GFALM scheme was proposed based on the asymptotic flow \eqref{eq:GFALM}, and the original flow \eqref{eq:constrained-gradient-flow} holds independent interest which could give rise to many more possible numerical approximations. 
Thus, in this section, we investigate the continuous gradient flow \eqref{eq:constrained-gradient-flow} and also establish a local exponential convergence theory for it. 
For simplicity, we shall restrict our study to one dimension: $d=1$, and assume that the gradient flow \eqref{eq:constrained-gradient-flow} admits a unique global solution $u(t):=u(\cdot,t)$ with the following regularity assumption:
\begin{align}
	u \in C^{1}\left( (0,\infty); H^1(\Omega) \right). \label{eq:regularity-for-GF} 
\end{align}
Noting that $H^1(\Omega)\hookrightarrow L^{2p}(\Omega)$ and $H^1(\Omega)\hookrightarrow L^{\infty}(\Omega)$ for $d=1$, the Lagrange multiplier $\lambda(u) = {\left\langle \mathcal{A}u, |u|^{p-1} u\right\rangle}\big/{\|u\|_{L^{2p}}^{2p}}$ is well-defined along the flow \eqref{eq:constrained-gradient-flow}. Moreover, by the energy decaying property \eqref{eq:lpplus1-conservation-energy-decay} and Lemma~\ref{lem:h1-control-by-Q}, we have the uniform $H^1$ bound: $\|u(t)\|_{H^1}\lesssim 1$, $\forall\,t>0$.

\begin{lemma}\label{lem:local-stability-ground-state-continous}
	Under Assumptions~\ref{assump:potential-condition}\&\ref{assump:omega-conditions}, \eqref{eq:coercivity-second-variation} and \eqref{eq:regularity-for-GF}, for any sufficiently small $\delta > 0$ that meets the requirements of Lemma~\ref{lem:implicit-function-normalization} and Proposition~\ref{prop:coercive-second-variation}. Then there exists $\delta_{0}$ such that 
    for all $u(0)\in U_{\delta_{0}}(u_{g})\cap\mathcal{S}_{p+1}$, the solution to \eqref{eq:constrained-gradient-flow} with $u(0)$ as initial data satisfies
	\begin{align*}
		\|u(t)-u_{g}\|_{H^{1}} \le \delta, \qquad \forall\, t\ge 0.
	\end{align*}
\end{lemma}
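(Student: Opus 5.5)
The argument mirrors the discrete Lyapunov stability proof of Lemma~\ref{lem:local-stability-ground-state}, with the discrete step bound replaced by time continuity of the trajectory. The two ingredients are the conservation of $\|u(t)\|_{L^{p+1}}$ and the monotone decay of $Q$ in \eqref{eq:lpplus1-conservation-energy-decay}, together with the two-sided quadratic growth \eqref{eq:energy-gap-quadratic} from Proposition~\ref{prop:coercive-second-variation}.

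\textbf{Step 1: the quadratic growth constants.} Given $\delta$ as in the statement, we first fix $0<c_1\le c_2$ with
\[
c_1\|u-u_g\|_{H^1}^2\le Q(u)-Q(u_g)\le c_2\|u-u_g\|_{H^1}^2,\qquad u\in U_\delta(u_g)\cap\mathcal{S}_{p+1}.
\]
We then choose $\delta_0<\delta\sqrt{c_1/c_2}$, and in addition $\delta_0<\delta$.

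\textbf{Step 2: the flow stays on the constraint and decreases $Q$.} By \eqref{eq:lpplus1-conservation-energy-decay}, $u(t)\in\mathcal{S}_{p+1}$ for all $t\ge0$, and $Q(u(t))\le Q(u(0))$. The regularity \eqref{eq:regularity-for-GF} only gives $C^1$ on $(0,\infty)$. So I would also use that $u\in C([0,\infty);H^1)$, i.e.\ continuity at $t=0$ in $H^1$. This is implicit in $u$ being the solution with initial data $u(0)$, and I would state it as part of the well-posedness assumption. With it, the map $t\mapsto\|u(t)-u_g\|_{H^1}$ is continuous on $[0,\infty)$.

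\textbf{Step 3: contradiction via continuity.} Suppose some $t_1$ has $\|u(t_1)-u_g\|_{H^1}>\delta$. Since $\|u(0)-u_g\|_{H^1}\le\delta_0<\delta$, the intermediate value theorem gives a first time $t_2\in(0,t_1)$ with $\|u(t_2)-u_g\|_{H^1}=\delta$. Then $u(t_2)\in U_\delta(u_g)\cap\mathcal{S}_{p+1}$, and the lower bound gives
\[
Q(u(t_2))-Q(u_g)\ge c_1\delta^2>c_2\delta_0^2\ge Q(u(0))-Q(u_g).
\]
This contradicts the energy decay $Q(u(t_2))\le Q(u(0))$.

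Because the trajectory is continuous, there are no jumps to control, so the discrete step bound \eqref{eq:un-step-less-u0ug} is not needed here. This makes the argument simpler than in the discrete case.

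\textbf{Main obstacle.} The delicate point is justifying the energy identity \eqref{eq:lpplus1-conservation-energy-decay} and the continuity down to $t=0$ under only the regularity \eqref{eq:regularity-for-GF}. For $t>0$, the identity follows by pairing the flow with $|u|^{p-1}u$ and with $\mathcal{A}u$. This uses the $d=1$ embedding $H^1\hookrightarrow L^\infty$, which makes $\lambda(u)$ well defined and the pairings legitimate in the $H^{-1}$–$H^1$ duality. Integrating from $\varepsilon$ to $t$ and letting $\varepsilon\to0$, using continuity of $Q$ and of $\|\cdot\|_{L^{p+1}}$ in $H^1$, transfers the decay to $t=0$.
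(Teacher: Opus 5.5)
Your proof is correct and is essentially the paper's argument. You fix the two-sided quadratic-growth constants $c_1\le c_2$ from Proposition~\ref{prop:coercive-second-variation}, take $\delta_0<\delta\sqrt{c_1/c_2}$, and use continuity of $t\mapsto\|u(t)-u_g\|_{H^1}$ to find a time $t_2$ with $\|u(t_2)-u_g\|_{H^1}=\delta$, which contradicts the energy decay \eqref{eq:lpplus1-conservation-energy-decay}. Your remark that this step needs $H^1$-continuity at $t=0$, which \eqref{eq:regularity-for-GF} alone does not supply, makes explicit something the paper uses only implicitly; you also keep the squares in the quadratic bounds, which the paper's displayed inequality drops.
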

\begin{proof}
    From \eqref{eq:energy-gap-quadratic}, there exist constants $0<c_1\leq c_2$ such that
    \begin{align*}
    	c_1 \left\| u-u_{g} \right\|_{H^{1}} \leq Q(u) - Q(u_{g}) \leq c_2\left\| u - u_{g} \right\|_{H^{1}},\quad\forall\,u\in U_{\delta}(u_{g})\cap\mathcal{S}_{p+1}.
    \end{align*}
    Fix $\delta_{0}$ such that $\delta_{0} < \delta \sqrt{c_1/c_2}$. We now proceed to show that the conclusion of this lemma holds for this $\delta_{0}$. We argue by contradiction. Suppose that there exists $t_{1}>0$ such that $\|u(t_{1})-u_{g}\|_{H^{1}}>\delta$. By the continuity of the mapping $t\mapsto \|u(t)-u_{g}\|_{H^{1}}$, there exists $t_{2}\in (0,t_{1})$ with $\|u(t_{2})-u_{g}\|_{H^{1}}=\delta$. On the one hand, using the lower quadratic bound at $u(t_{2})$,
    \begin{align*}
		Q\big(u(t_{2})\big) \;\ge\; Q(u_{g}) + c_1\left\| u(t_{2}) - u_{g} \right\|_{H^{1}} \;\ge\; Q(u_{g}) + c_1\delta^{2}.
	\end{align*}
    On the other hand, by the choosing of $\delta_{0}$, we have $c_2\delta_{0}^{2}<c_1\delta^{2}$, and therefore,
	\begin{align*}
		Q\big(u(0)\big) \le Q(u_{g}) + c_2\|u(0)-u_{g}\|_{H^{1}}^{2}
		\le Q(u_{g}) + c_2\delta_{0}^{2}< Q\big(u(t_{2})\big).
	\end{align*}
    This contradicts with \eqref{eq:lpplus1-conservation-energy-decay}.
\end{proof}

For $u\in U_{\delta}(u_{g})\cap\mathcal{S}_{p+1}$, denote $v = u - u_{g}$ and  $\mathcal{N}(u,u_{g}) := \lambda(u)|u|^{p-1}u - \lambda_{g}|u_{g}|^{p-1}u_{g}$. Then
\begin{align}
	\mathcal{N}(u,u_{g}) =&\, \lambda_{g} \big( |u|^{p-1}u - |u_{g}|^{p-1}u_{g} \big) + \big( \lambda(u) - \lambda_{g} \big) |u_{g}|^{p-1}u_{g} \nonumber\\
	&\, + \big( \lambda(u) - \lambda_{g} \big)\big( |u|^{p-1}u - |u_{g}|^{p-1}u_{g} \big). \label{eq:nonlinear-term-N1}
\end{align}
Now we estimate $\lambda(u)-\lambda_g$. Writing $\lambda(u) := f_{1}(u) / g_{1}(u)$ with $f_{1}(u) := \langle \mathcal{A}u, |u|^{p-1}u \rangle$ and $g_{1}(u) := \|u\|_{L^{2p}}^{2p}$, one has
\begin{align*}
	\mathcal{D}\lambda(u)[v] = \frac{g_{1}(u)\, \mathcal{D}f_{1}(u)[v] - f_{1}(u)\, \mathcal{D}g_{1}(u)[v]}{(g_{1}(u))^{2}}, 
\end{align*}
with
$\mathcal{D}f_{1}(u)[v] = \left\langle \mathcal{A}u, p\,|u|^{p-1}v \right\rangle + \left\langle \mathcal{A}v, |u|^{p-1}u \right\rangle, \  \mathcal{D}g_{1}(u)[v] = 2p\, \langle |u|^{2p-2}u, v \rangle$. By H\"older’s inequality and the Sobolev embedding, we obtain
\begin{align*}
	| \mathcal{D}f_{1}(u)[v] | \lesssim&\, \|u\|_{L^{\infty}}^{p-1}\|u\|_{H^{1}} \|v\|_{H^{1}} \lesssim \|u\|_{H^{1}}^{p}\|v\|_{H^{1}} \lesssim \|v\|_{H^{1}}, \\
	 | \mathcal{D}g_{1}(u)[v] | \lesssim&\, \|u\|_{L^{\infty}}^{p-1}\|u\|_{L^{p+1}}^{p-1} \|v\|_{L^{p+1}} \lesssim \|u\|_{H^{1}}^{2p-1}\|v\|_{H^{1}} \lesssim \|v\|_{H^{1}}.
\end{align*}
For $g_{1}$, we have the Taylor expansion
\begin{align*}
	g_{1}(u) - g_{1}(u_{g}) = \int_{0}^{1} \mathcal{D}g_{1}(u_{g}+\rho v)[v]\,\mathrm{d}\rho = \mathcal{O}(\|v\|_{H^{1}}).
\end{align*}
Since $ |g_{1}(u_{g})| > 0 $, for sufficiently small $\delta$, $g_{1}(u)$ remains bounded away from zero, which yields $|\mathcal{D}\lambda(u)[v] | \lesssim \| v \|_{H^{1}}$, and consequently the Taylor expansion,
\begin{align}
	\lambda(u) - \lambda_g = \lambda(u) - \lambda(u_{g}) = \int_{0}^{1} \mathcal{D}\lambda(u_{g}+\rho v)[v]\,\mathrm{d}\rho = \mathcal{O}(\|v\|_{H^{1}}). \label{eq:lambda-u-lambda-ug}
\end{align}
Note that, by writing $v=u-u_g=r(\xi)u_g+\xi$ with $\xi\in\mathcal{T}_{u_g}$, we have $\left\langle |u_{g}|^{p-1}u_{g}, v \right\rangle=r(\xi)=\mathcal{O}(\|\xi\|_{H^1}^2)=\mathcal{O}(\|v\|_{H^1}^2)$. 
Substituting \eqref{eq:lambda-u-lambda-ug} into \eqref{eq:nonlinear-term-N1} and using process similar to \eqref{eq:high-order-esti-2} yields
\begin{align}
    \left\langle \mathcal{N}(u,u_{g}), v \right\rangle
     =&\, p\,\lambda_{g}\left\langle |u_{g}|^{p-1}v, v \right\rangle + \big( \lambda(u) - \lambda_{g} \big) \left\langle |u_{g}|^{p-1}u_{g}, v \right\rangle + o\left( \|v\|_{H^{1}}^{2} \right) \nonumber \\
     =&\, p\,\lambda_{g}\left\langle |u_{g}|^{p-1}v, v \right\rangle + o\left( \|v\|_{H^{1}}^{2} \right). \label{eq:Nu-ug}
\end{align}

Denote the manifold gradient of $Q(u)$ as $\mathcal{F}(u) := \mathcal{A} u - \lambda(u)|u|^{p-1}u$, and the \L{}ojasiewicz-type gradient inequality near  $u_{g}$ reads as follows.

\begin{lemma}\label{lem:Lojasiewicz-Gradient-Inequalityug}
	Under Assumptions~\ref{assump:potential-condition}\&\ref{assump:omega-conditions}, \eqref{eq:coercivity-second-variation} and \eqref{eq:regularity-for-GF}, there exists some $\delta_{0}>0$ such that for $u(0)\in U_{\delta_{0}}(u_{g})\cap\mathcal{S}_{p+1}$, the solution to \eqref{eq:constrained-gradient-flow} satisfies
	\begin{align}
		Q(u(t)) - Q(u_{g}) \lesssim\|\mathcal{F}(u(t))\|_{H^{-1}}^{2}
		\qquad \forall\, t\geq 0. \label{eq:Lojasiewicz-Gradient-Inequalityug}
	\end{align}
\end{lemma}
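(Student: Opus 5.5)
The plan is to mimic the proof of Lemma~\ref{lem:Lojasiewicz-Inequality}, replacing $\mu(u)$ by $\mathcal{F}(u)$ and using the expansion \eqref{eq:Nu-ug} of the nonlinear term. First, by Lemma~\ref{lem:local-stability-ground-state-continous}, for $\delta_0$ small enough the trajectory satisfies $u(t)\in U_{\delta}(u_g)\cap\mathcal{S}_{p+1}$ for all $t\ge0$. Here $\delta$ is chosen small enough for Lemma~\ref{lem:implicit-function-normalization}, Proposition~\ref{prop:coercive-second-variation}, and the $o(\cdot)$ remainders below; the constraint is preserved by \eqref{eq:lpplus1-conservation-energy-decay}. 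It therefore suffices to prove the pointwise inequality $Q(u)-Q(u_g)\lesssim\|\mathcal{F}(u)\|_{H^{-1}}^2$ for every $u\in U_\delta(u_g)\cap\mathcal{S}_{p+1}$.

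Fix such $u$ and set $v=u-u_g$. Since $\mathcal{F}(u_g)=\mathcal{A}u_g-\lambda_g|u_g|^{p-1}u_g=0$ (note $\lambda(u_g)=\lambda_g$ by taking the inner product of \eqref{eq:euler-lagrange-ug} with $|u_g|^{p-1}u_g$), we get
\[
\langle\mathcal{F}(u),v\rangle=\langle\mathcal{F}(u)-\mathcal{F}(u_g),v\rangle=\langle\mathcal{A}v,v\rangle-\langle\mathcal{N}(u,u_g),v\rangle .
\]
Inserting \eqref{eq:Nu-ug} gives $\langle\mathcal{F}(u),v\rangle=\langle\mathcal{L}v,v\rangle+o(\|v\|_{H^1}^2)$. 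The derivation of \eqref{eq:Nu-ug} uses three ingredients. The first is $\lambda(u)-\lambda_g=\mathcal{O}(\|v\|_{H^1})$ from \eqref{eq:lambda-u-lambda-ug}. The second is $\langle|u_g|^{p-1}u_g,v\rangle=r(\xi)=\mathcal{O}(\|v\|_{H^1}^2)$, so the middle term in \eqref{eq:nonlinear-term-N1} is $\mathcal{O}(\|v\|^3)$. The third is that the cross term is $\mathcal{O}(\|v\|_{H^1})\cdot\mathcal{O}(\|v\|_{H^1}^2)$, using $\||u|^{p-1}u-|u_g|^{p-1}u_g\|_{L^{(p+1)/p}}\lesssim\|v\|_{L^{p+1}}$ together with $H^1\hookrightarrow L^{p+1}$. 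I would recheck each of these briefly.

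Next I need the coercivity $\langle\mathcal{L}v,v\rangle\gtrsim\|v\|_{H^1}^2$. The vector $v$ is not in $\mathcal{T}_{u_g}$, so I decompose $v=r(\xi)u_g+\xi$ and use \eqref{eq:Qu-Qug-Step3}, which gives $\langle\mathcal{L}v,v\rangle=\langle\mathcal{L}\xi,\xi\rangle+\mathcal{O}(\|\xi\|_{H^1}^4)$. Combining \eqref{eq:linearized-operator-coercivity} with \eqref{eq:u-ug-approx-xi}, this yields $\langle\mathcal{L}v,v\rangle\ge c\|v\|_{H^1}^2-C\|v\|_{H^1}^4$. Therefore
\[
\|\mathcal{F}(u)\|_{H^{-1}}\ \ge\ \frac{\langle\mathcal{F}(u),v\rangle}{\|v\|_{H^1}}\ \ge\ c\|v\|_{H^1}+o(\|v\|_{H^1})\ \ge\ \tfrac c2\|v\|_{H^1}
\]
once $\delta$ is small, with the case $v=0$ trivial.

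Finally, the upper bound in Proposition~\ref{prop:coercive-second-variation} gives $Q(u)-Q(u_g)\lesssim\|v\|_{H^1}^2\lesssim\|\mathcal{F}(u)\|_{H^{-1}}^2$. Applying this at $u=u(t)$ for each $t\ge0$ proves \eqref{eq:Lojasiewicz-Gradient-Inequalityug}. I expect the main obstacle to be the careful justification of the $o(\|v\|_{H^1}^2)$ remainder in \eqref{eq:Nu-ug}, in particular checking that the multiplier perturbation $\lambda(u)-\lambda_g$ only enters multiplied by a quantity of order $\|v\|^2$. That control relies on the tangential cancellation $\langle|u_g|^{p-1}u_g,v\rangle=r(\xi)$ and on the $d=1$ embedding $H^1\hookrightarrow L^\infty$ used in bounding $\mathcal{D}\lambda$.
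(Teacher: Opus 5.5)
Your proposal is correct and follows essentially the same route as the paper. Local stability keeps $u(t)$ in $U_\delta(u_g)\cap\mathcal{S}_{p+1}$, the expansion \eqref{eq:Nu-ug} yields $\langle\mathcal{F}(u),v\rangle=\langle\mathcal{L}v,v\rangle+o(\|v\|_{H^1}^2)$, and coercivity together with Proposition~\ref{prop:coercive-second-variation} closes the argument. Your explicit use of \eqref{eq:Qu-Qug-Step3} to justify $\langle\mathcal{L}v,v\rangle\gtrsim\|v\|_{H^1}^2$ for the non-tangential $v$ fills in a step that the paper only asserts.
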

\begin{proof}
    By Lemma~\ref{lem:local-stability-ground-state-continous} and \eqref{eq:Nu-ug}, for $v(t):=u(t)-u_g$, we have $\|v(t)\|_{H^1}\leq\delta$ and
    \begin{align*}
		\langle \mathcal{F}(u(t)), v(t) \rangle
        &=\langle \mathcal{A}u(t), v(t) \rangle - \lambda(u(t)) \langle |u(t)|^{p-1}u(t), v(t) \rangle \\
		&=\langle \mathcal{A}u(t), v(t) \rangle - \lambda_g\langle |u_{g}|^{p-1}u_g, v(t) \rangle -  \langle \mathcal{N}(u(t),u_{g}), v(t) \rangle \\
        &=\langle \mathcal{A}u(t), v(t) \rangle - \langle \mathcal{A}u_{g}, v(t) \rangle - p\,\lambda_{g} \left\langle |u_{g}|^{p-1}v(t), v(t) \right\rangle + o\!\left( \|v(t)\|_{H^{1}}^{2} \right) \\
        &= \langle \mathcal{L}v(t), v(t) \rangle + o\!\left( \|v(t)\|_{H^{1}}^{2} \right).
	\end{align*}
	Noting that $\langle\mathcal{L}v(t),v(t)\rangle \asymp \|v(t)\|_{H^{1}}^{2}$ implied by the non-degeneracy condition \eqref{eq:coercivity-second-variation}, we find that when $\delta$ is small enough,
    \begin{align*}
		\|\mathcal{F}(u(t))\|_{H^{-1}} 
        \ge \frac{ \langle \mathcal{F}(u(t)), v(t) \rangle }{\|v(t)\|_{H^{1}}}
        = \frac{\langle \mathcal{L}v(t), v(t) \rangle}{\|v(t)\|_{H^{1}}} + o\!\left( \|v(t)\|_{H^{1}} \right)
        \gtrsim \|v(t)\|_{H^{1}},\quad \forall\, t\geq0.
	\end{align*}
    By Proposition~\ref{prop:coercive-second-variation} and Lemma~\ref{lem:local-stability-ground-state-continous}, the local equivalence between $\|v(t)\|_{H^{1}}^{2}$ and $Q(u(t))-Q(u_{g})$ holds for all $t\geq0$, which completes the proof.
\end{proof}

Now we are ready to state and prove the exponential convergence rate on the continuous flow \eqref{eq:constrained-gradient-flow}. 

\begin{theorem}[Exponential convergence of original flow]\label{thm:exponential-convergence-ground-state-continuous}
	Under Assumptions~\ref{assump:potential-condition}\&\ref{assump:omega-conditions}, \eqref{eq:coercivity-second-variation} and \eqref{eq:regularity-for-GF}. Then, there exists some  $\delta_{0}>0$ such that the flow \eqref{eq:constrained-gradient-flow} with $u(0)\in U_{\delta_{0}}(u_{g})\cap\mathcal{S}_{p+1}$ satisfies
	\begin{align*}
		\|u(t)-u_{g}\|_{H^{1}} \le C\,{\rm e}^{-a t}, \qquad \forall\, t>0,
	\end{align*}
	where $C,a>0$ are constants independent of $t$.
\end{theorem}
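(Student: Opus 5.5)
The plan is to mimic the discrete proof of Theorem~\ref{thm:exponential-convergence-ground-state}, replacing the telescoping energy decay by a differential inequality for $E(t):=Q(u(t))-Q(u_g)$. First fix $\delta_0$ as in Lemma~\ref{lem:local-stability-ground-state-continous} and Lemma~\ref{lem:Lojasiewicz-Gradient-Inequalityug} (taking the smaller of the two). Then $u(t)\in U_\delta(u_g)\cap\mathcal{S}_{p+1}$ for all $t\ge0$; the $L^{p+1}$ norm is conserved by \eqref{eq:lpplus1-conservation-energy-decay}. Hence both the quadratic growth \eqref{eq:energy-gap-quadratic} and the \L{}ojasiewicz inequality \eqref{eq:Lojasiewicz-Gradient-Inequalityug} are available along the whole trajectory.

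Next, use the energy identity \eqref{eq:lpplus1-conservation-energy-decay} together with the flow equation \eqref{eq:constrained-gradient-flow}, which states $\partial_t u=-\mathcal{F}(u)$. Under the regularity \eqref{eq:regularity-for-GF}, $E$ is $C^1$ on $(0,\infty)$ with
\begin{align*}
E'(t)=-2\|\partial_t u(t)\|_{L^2}^2=-2\|\mathcal{F}(u(t))\|_{L^2}^2\le -2\|\mathcal{F}(u(t))\|_{H^{-1}}^2,
\end{align*}
where the last step uses $\|f\|_{H^{-1}}\le\|f\|_{L^2}$, which follows directly from the definition of the dual norm. Combining this with \eqref{eq:Lojasiewicz-Gradient-Inequalityug}, i.e. $E(t)\le C_{Lo}\|\mathcal{F}(u(t))\|_{H^{-1}}^2$, gives $E'(t)\le -(2/C_{Lo})E(t)$. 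Gr\"onwall's inequality, with continuity of $E$ at $t=0$ (from $u\in C([0,\infty);H^1)$ and the continuity of $Q$ on $H^1$), then yields $E(t)\le e^{-2t/C_{Lo}}E(0)$.

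Finally, use the two-sided bound \eqref{eq:energy-gap-quadratic}: $c_1\|u(t)-u_g\|_{H^1}^2\le E(t)$ and $E(0)\le c_2\delta_0^2$. Together these give $\|u(t)-u_g\|_{H^1}\le \sqrt{c_2/c_1}\,\delta_0\, e^{-t/C_{Lo}}$, so one may take $a=1/C_{Lo}$.

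The main obstacle is justifying that $\|\mathcal{F}(u(t))\|_{L^2}$ is finite, so that the identity $E'=-2\|\partial_tu\|_{L^2}^2$ holds rigorously. Since $\mathcal{F}(u)=-\partial_t u\in H^1$ by \eqref{eq:regularity-for-GF}, this is fine for $t>0$. Only continuity at $t=0$ is needed for Gr\"onwall, and it can be handled by applying the inequality on $[\varepsilon,t]$ and letting $\varepsilon\to0$. The \L{}ojasiewicz step itself is already supplied by Lemma~\ref{lem:Lojasiewicz-Gradient-Inequalityug}, so no new estimate of $\lambda(u)$ is needed.
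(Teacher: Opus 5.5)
Your proposal is correct and follows essentially the same route as the paper: the dissipation identity $\frac{\mathrm{d}}{\mathrm{d}t}Q(u(t))=-2\|\mathcal{F}(u(t))\|_{L^2}^2$, the bound $\|\cdot\|_{H^{-1}}\le\|\cdot\|_{L^2}$, the \L{}ojasiewicz-type inequality of Lemma~\ref{lem:Lojasiewicz-Gradient-Inequalityug} along a trajectory kept near $u_g$ by Lemma~\ref{lem:local-stability-ground-state-continous}, Gr\"onwall, and the quadratic growth \eqref{eq:energy-gap-quadratic} to return to the $H^1$ error. Your explicit treatment of continuity at $t=0$ (which the paper also uses implicitly) and the concrete rate $a=1/C_{Lo}$ are refinements of the same argument, not a different one.
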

\begin{proof}
    The decaying property \eqref{eq:lpplus1-conservation-energy-decay}  states that
    \begin{align*}
    	\frac{{\rm d}}{{\rm d}t} Q(u(t)) = -2\|\mathcal{F}(u(t))\|_{L^{2}}^{2} \le -c_{D}\,\|\mathcal{F}(u(t))\|_{H^{-1}}^{2},
    \end{align*}
    for some $c_{D}>0$. Combining with the \L{}ojasiewicz-type gradient inequality \eqref{eq:Lojasiewicz-Gradient-Inequalityug} yields 
    \begin{align*}
    	\frac{{\rm d}}{{\rm d}t}\big[Q(u(t)) - Q(u_{g})\big] \le -\frac{c_{D}}{C_L}\,\big[Q(u(t)) - Q(u_{g})\big],
    \end{align*}
    with some $C_L>0$. 
    By Gr\"onwall’s inequality and setting $a_{0}:=\frac{c_{D}}{C_{L}}$, we obtain
    \begin{align*}
    	Q(u(t)) - Q(u_{g}) \le {\rm e}^{-a_{0} t}\,\big[Q(u(0)) - Q(u_{g})\big].
    \end{align*}
    The local equivalence $Q(u(t))-Q(u_{g}) \asymp \|u(t)-u_{g}\|_{H^{1}}^{2}$  finishes the proof with $a=a_{0}/2$.
\end{proof}

\section{Conclusion} \label{sec. 5}
The action ground state of the focusing nonlinear Schr\"odinger equation can be formulated as an $L^{p+1}$-constrained minimization ($p>1$) \cite{liu2023computing}, generalizing the traditional $L^2$-constrained energy ground state problem. To compute these action ground states, a Gradient Flow with Asymptotic Lagrange Multiplier (GFALM) method has been proposed in \cite{liu2023computing}. While GFALM's practical effectiveness has been demonstrated, rigorous analysis has been challenging due to the $L^{p +1}$ constraint, which lacks the inner-product structure present in the energy ground state problem. In this work, we carried out systematic convergence analysis for GFALM.  Specifically, we proved a general global convergence result, guaranteeing the existence of accumulation points and convergent subsequences for the semi-discrete GFALM scheme. Furthermore, we developed a novel local convergence framework tailored to the $L^{p+1}$-spherical manifold, demonstrating the numerically observed exponential convergence rate of the GFALM scheme under suitable non-degeneracy assumptions. This involved characterizing the local geometry of the constraint manifold, establishing a quadratic growth property of the functional, and deriving a \L{}ojasiewicz-type gradient inequality. A parallel exponential convergence analysis was also provided for the continuous normalized gradient flow, laying the groundwork for future numerical discretization designs.

\section*{Acknowledgement}
W.~Liu is supportted by the NSFC grant 12571448 and the Innovation Research Foundation of NUDT (202402-YJRC-XX-002). T. Wang and X. Zhao are supported by National Key Research and Development Program of China, National MCF Energy R\&D Program (No. 2024YFE03240400), NSFC 42450275, 12271413.

\appendix
\section{Proof of Lemma~\ref{lem:implicit-function-normalization}}\label{appdx:proof-lem:implicit-function-normalization}
	Rewrite the relation $\|u\|_{L^{p+1}}=1$ as $ G(u) \equiv 0 $ and $ u := \chi(r, \xi) $. Defining an auxiliary mapping $ \widetilde{G}(r, \xi) := G\bigl(\chi(r, \xi)\bigr) $, we compute the (G\^ateaux) derivatives of $ \widetilde{G} $ with respect to each variable:
	\begin{align*}
		\partial_{r}\widetilde{G}(r, \xi) &= 2\,\left\langle |\chi(r, \xi)|^{p-1}\chi(r, \xi), u_{g} \right\rangle, \\
		\mathcal{D}_{\xi}\widetilde{G}(r, \xi)[v] &= 2\,\left\langle |\chi(r, \xi)|^{p-1}\chi(r, \xi), v \right\rangle, \quad \forall v\in \mathcal{T}_{u_{g}}.
	\end{align*}
	With $p>1$, one finds
	\begin{align*}
		\widetilde{G} \in C^{2}(\mathbb{R} \times \mathcal{T}_{u_{g}}; \mathbb{R}), \quad \widetilde{G}(0, 0) = 0, \quad \partial_{r}\widetilde{G}(0, 0) = 2 \neq 0.
	\end{align*}
	By the implicit function theorem, we obtain the first assertion of Lemma~\ref{lem:implicit-function-normalization}.

    Clearly, $r(0)=0$. To compute the G\^{a}teaux derivatives of $ r(\xi) $, we set $ \widehat{G}(\xi) := \widetilde{G}(r(\xi), \xi) \equiv 0$. By the chain rule,
	\begin{align*}
		\mathcal{D}\widehat{G}(\xi)[v] = \partial_{r}\widetilde{G}(r(\xi), \xi)\, \mathcal{D}r(\xi)[v] + \mathcal{D}_{\xi}\widetilde{G}(r(\xi), \xi)[v] = 0, \quad \forall v\in \mathcal{T}_{u_{g}}.
	\end{align*}
	Hence, 
	\begin{align*}
		\mathcal{D}r(\xi)[v] = -\frac{\mathcal{D}_{\xi} \widetilde{G}(r(\xi), \xi)[v]}{\partial_{r}\widetilde{G}(r(\xi), \xi)} = -\frac{\left\langle |\chi(r(\xi), \xi)|^{p-1} \chi(r(\xi), \xi), v \right\rangle}{\left\langle |\chi(r(\xi), \xi)|^{p-1}\chi(r(\xi), \xi), u_{g} \right\rangle}.
	\end{align*}
	Note that $ \mathcal{D}r(0)[v] = 0 $ for all $v\in \mathcal{T}_{u_{g}}$. For any $v_{1}\in \mathcal{T}_{u_{g}}$, define
	\begin{align*}
		F(r, \xi) := -\frac{f_{2}(r, \xi)}{g_{2}(r, \xi)} := -\frac{\left\langle |\chi(r, \xi)|^{p-1}\chi(r, \xi), v_{1} \right\rangle}{\left\langle |\chi(r, \xi)|^{p-1}\chi(r, \xi), u_{g} \right\rangle}.
	\end{align*}
	Let $ \widetilde{F}(\xi) = F(r(\xi), \xi) $. Then,
	\begin{align}
		\mathcal{D}^{2}r(\xi)[v_{2}, v_{1}] = \mathcal{D}\widetilde{F}(\xi)[v_{2}]  = \partial_{r}F(r(\xi), \xi)\, \mathcal{D}r(\xi)[v_{2}] + \mathcal{D}_{\xi}F(r(\xi), \xi)[v_{2}], \quad \forall v_{1}, v_{2}\in \mathcal{T}_{u_{g}}, \label{eq:implicit-function-d2r discrete}
	\end{align}
	and we handle each term on the right-hand-side of \eqref{eq:implicit-function-d2r discrete} individually. Firstly, we have
	\begin{align*}
		\partial_{r}F(r,\xi) = - \frac{g_{2}(r,\xi)\,\partial_{r}f_{2}(r,\xi) - f_{2}(r,\xi)\, \partial_{r} g_{2}(r,\xi)}{(g_{2}(r,\xi))^2},
	\end{align*}
	with
	$\partial_{r}f_{2}(r,\xi) = p\,\left\langle |\chi(r,\xi)|^{p-1}u_{g}, v_{1} \right\rangle, \ \partial_{r}g_{2}(r,\xi) = p\,\left\langle |\chi(r,\xi)|^{p-1}u_{g}, u_{g} \right\rangle.$
	Secondly, 
	\begin{align*}
		\mathcal{D}_{\xi}F(r,\xi)[v_{2}] = - \frac{g_{2}(r,\xi)\,\mathcal{D}_{\xi}f_{2}(r,\xi)[v_{2}] - f_{2}(r,\xi)\,\mathcal{D}_{\xi}g_{2}(r,\xi)[v_{2}]}{(g_{2}(r,\xi))^2},
	\end{align*}
	with
	$\mathcal{D}_{\xi}f_{2}(r,\xi)[v_{2}] = p\,\left\langle |\chi(r,\xi)|^{p-1}v_{2}, v_{1} \right\rangle$ and $\mathcal{D}_{\xi}g_{2}(r,\xi)[v_{2}] = p\,\left\langle |\chi(r,\xi)|^{p-1}v_{2}, u_{g} \right\rangle$. 
	By the H\"older inequality and noting that $\|\chi(r(\xi), \xi)\|_{L^{p+1}}=\|u_{g}\|_{L^{p+1}}=1$, we have
    \begin{align*}
		&|f_{2}(r(\xi), \xi)| \leq \|v_{1}\|_{L^{p+1}}, 
        \quad |g_{2}(r(\xi), \xi)| \leq 1, \\
        &|\partial_{r}f_{2}(r(\xi), \xi)|\leq p\|v_{1}\|_{L^{p+1}}, 
        \quad |\partial_{r}g_{2}(r(\xi), \xi)|\leq p, \\
		&\big|\mathcal{D}_{\xi}f_{2}(r(\xi), \xi)[v_{2}]\big| \leq p\|v_{1}\|_{L^{p+1}}\|v_{2}\|_{L^{p+1}}, 
        \quad \big|\mathcal{D}_{\xi}g_{2}(r(\xi), \xi)[v_{2}]\big| \leq p\|v_{2}\|_{L^{p+1}}.
	\end{align*}
	Denote $\widehat{g}_{2}(u) = \left\langle |u|^{p-1}u, u_{g} \right\rangle$. Since $ \widehat{g}_{2}(u_{g}) = \|u_{g}\|_{L^{p+1}}^{p+1} = 1 $, for $ u \in U_{h,\delta}(u_{g})\cap \mathcal{S}_{h,p+1} $ we have
	\begin{align*}
		|\widehat{g}_{2}(u)-\widehat{g}_{2}(u_{g})| \leq&\,  \int_{\Omega}{ \left| |u|^{p-1}u - |u_{g}|^{p-1}u_{g} \right| \left| u_{g} \right| }\,\text{d}\mathbf{x} \\
		\leq&\, p\int_{\Omega}{ \left( |u|^{p-1} + |u_{g}|^{p-1} \right) \left| u-u_{g} \right| \left| u_{g} \right| }\,\text{d}\mathbf{x} \\ 
		\leq&\, p\left( \|u\|_{L^{p+1}}^{p-1} + \|u_{g}\|_{L^{p+1}}^{p-1} \right)\|u-u_{g}\|_{L^{p+1}} \|u_{g}\|_{L^{p+1}} \lesssim \delta. 
	\end{align*}
	Thus, when $\delta$ is sufficiently small, $ |g_{2}(r(\xi), \xi)| $ is bounded away from zero. Plugging the above bounds into \eqref{eq:implicit-function-d2r discrete} yields
	\begin{align} 
		\Big|\mathcal{D}^{2}r(\xi)[v_{2},v_{1}]\Big| 
        \leq \big|\partial_{r}F(r(\xi),\xi)\big|\,\big|\mathcal{D}r(\xi)[v_{2}]\big| + \big|\mathcal{D}_{\xi}F(r(\xi),\xi)[v_{2}]\big| \lesssim \|v_{1}\|_{L^{p+1}} \|v_{2}\|_{L^{p+1}}. 
        \label{eq:implicit-function-d2r-estimate-disc.}	
	\end{align}
	According to the Taylor expansion with integral remainder,
	\begin{align*}
		r(\xi) = r(0) + \mathcal{D}r(0)[\xi] + \int_{0}^{1}(1-\rho)\mathcal{D}^{2}r(\rho\xi)[\xi,\xi] \, \mathrm{d}\rho = \int_{0}^{1}(1-\rho)\mathcal{D}^{2}r(\rho\xi)[\xi,\xi] \, \mathrm{d}\rho.
	\end{align*} 
	Using \eqref{eq:implicit-function-d2r-estimate-disc.} gives $ |r(\xi)| \lesssim \|\xi\|_{L^{p+1}}^{2} $, which confirms the second assertion of Lemma~\ref{lem:implicit-function-normalization}.


\begin{thebibliography}{10}

\bibitem{altmann2021jmethod}
{\sc R.~Altmann, P.~Henning, and D.~Peterseim}, {\em The {J}-method for the
  {G}ross--{P}itaevskii eigenvalue problem}, Numer. Math., 148 (2021),
  pp.~575--610.

\bibitem{antoine2017efficient}
{\sc X.~Antoine, A.~Levitt, and Q.~Tang}, {\em Efficient spectral computation
  of the stationary states of rotating {B}ose--{E}instein condensates by
  preconditioned nonlinear conjugate gradient methods}, J. Comput. Phys.,
  343 (2017), pp.~92--109.

\bibitem{ardila2021global}
{\sc A.~H. Ardila and H.~Hajaiej}, {\em Global well-posedness, blow-up and
  stability of standing waves for supercritical {NLS} with rotation}, J. Dyn. Differ. Equ.,
  35 (2023), pp.~1643--1665.

\bibitem{bao2013mathematical}
{\sc W.~Bao and Y.~Cai}, {\em Mathematical theory and numerical methods for
  {Bose--Einstein} condensation}, Kinet. Relat. Models, 6 (2013),
  pp.~1--135.

\bibitem{bao2004computing}
{\sc W.~Bao and Q.~Du}, {\em Computing the ground state solution of
  {Bose--Einstein} condensates by a normalized gradient flow}, SIAM J. Sci. Comput., 25 (2004), pp.~1674--1697.

\bibitem{berestycki1983nonlinearI}
{\sc H.~Berestycki and P.-L. Lions}, {\em Nonlinear scalar field equations,
  {I}. existence of a ground state}, Arch. Ration. Mech. Anal.,
  82 (1983), pp.~313--345.

\bibitem{berestycki1983nonlinearII}
{\sc H.~Berestycki and P.-L. Lions}, {\em Nonlinear scalar
  field equations, {II}. existence of infinitely many solutions}, Arch. Ration. Mech. Anal.,
  82 (1983), pp.~347--375.

\bibitem{cances2010numerical}
{\sc E.~Canc{\'e}s, R.~Chakir, and Y.~Maday}, {\em Numerical analysis of
  nonlinear eigenvalue problems}, J. Sci. Comput., 45 (2010),
  pp.~90--117.

\bibitem{chang2007computing}
{\sc S.-H. Chang, C.-C. Chien, and B.-F. Jeng}, {\em Computing wave functions
  of nonlinear {Schr\"odinger} equations: A time-independent approach}, J. Comput. Phys.,
  226 (2007), pp.~104--130.

\bibitem{ChangWenZhao}
{\sc Z. Chang, Z. Wen, X. Zhao}, {\em Deep neural network approaches for computing the defocusing action ground state of nonlinear Schr\"odinger equation}, 
Ann. Appl. Math., 41 (2025), pp.~42–76.  

\bibitem{chen2025second}
{\sc H.~Chen, G.~Dong, J.~A. Iglesias, W.~Liu, and Z.~Xie}, {\em Second-order
  flows for approaching stationary points of a class of nonconvex energies via
  convex-splitting schemes}, SIAM J. Sci. Comput., 47 (2025),
  pp.~A1604--A1627.

\bibitem{CDLX2023JCP}
{\sc H.~Chen, G.~Dong, W.~Liu, and Z.~Xie}, {\em Second-order flows for computing the ground states of rotating Bose-Einstein condensates}, J. Comput. Phys., 475 (2023), p.~111872.

\bibitem{CLLZ2024SINUM}
{\sc Z.~Chen, J.~Lu, Y.~Lu, and X.~Zhang}, {\em On the convergence of Sobolev gradient flow for the Gross-Pitaevskii eigenvalue problem}, SIAM J. Numer. Anal., 62 (2024), pp.667--691.


\bibitem{coudiere2001discrete}
{\sc Y.~Coudi{\'e}re, T.~Gallou{\"e}t, and R.~Herbin}, {\em Discrete {Sobolev}
  inequalities and {$L^p$} error estimates for finite volume solutions of
  convection diffusion equations}, ESAIM Math. Model. Numer. Anal.,
  35 (2001), pp.~767--778.



\bibitem{danaila2010new}
{\sc I.~Danaila and P.~Kazemi}, {\em A new {Sobolev} gradient method for direct
  minimization of the {Gross--Pitaevskii} energy with rotation}, SIAM J. Sci. Comput.,
  32 (2010), pp.~2447--2467.

\bibitem{danaila2017computation}
{\sc I.~Danaila and B.~Protas}, {\em Computation of ground states of the
  {Gross--Pitaevskii} functional via {Riemannian} optimization}, SIAM J. Sci. Comput.,
  39 (2017), pp.~B1102--B1129.


\bibitem{dion2007ground}
{\sc C.~M. Dion and E.~Canc{\'e}s}, {\em Ground state of the time-independent
  {Gross--Pitaevskii} equation}, Comput. Phys. Commun., 177 (2007),
  pp.~787--798.

\bibitem{Dovetta}
{\sc  S. Dovetta, E. Serra, and P. Tilli}, {\em Action versus energy ground states in nonlinear
Schr\"odinger equations}, Math. Ann., 385 (2023) pp.~1545–1576.

\bibitem{faou2018convergence}
{\sc E.~Faou and T.~J{\'e}z{\'e}quel}, {\em Convergence of a normalized
  gradient algorithm for computing ground states}, IMA J. Numer. Anal.,
  38 (2018), pp.~360--376.

\bibitem{feng2025ima}
{\sc Z. Feng, Q. Tang, and C. Wang}, {\em On the discrete normalized gradient flow for computing ground states of rotating Bose–Einstein condensates: energy dissipation and global convergence}, IMA J. Numer. Anal., (2025), https://doi.org/10.1093/imanum/draf100


\bibitem{fukuizumi2001stability}
{\sc R.~Fukuizumi}, {\em Stability and instability of standing waves for the
  nonlinear {Schr\"odinger} equation with harmonic potential}, Discrete Contin. Dyn. Syst.,
  7 (2001), pp.~525--544.

\bibitem{fukuizumi2003instability}
{\sc R.~Fukuizumi and M.~Ohta}, {\em Instability of standing waves for
  nonlinear {Schr\"odinger} equations with potentials}, Differ. Integral Equ.,
  16 (2003), pp.~691--706.

\bibitem{GT}
{\sc D. Gilbarg and N. S. Trudinger}, {\em Elliptic Partial Differential Equations of Second Order}, Springer, 2001.

\bibitem{gong2017conservative}
{\sc Y.~Gong, Q.~Wang, Y.~Wang, and J.~Cai}, {\em A conservative {Fourier}
  pseudo-spectral method for the nonlinear {Schr{\"o}dinger} equation}, J. Comput. Phys., 328 (2017), pp.~354--370.

\bibitem{henning2023}
{\sc P.~Henning}, {\em The dependency of spectral gaps on the convergence of the inverse iteration for a nonlinear eigenvector problem}. Math. Mod. Meth. Appl. S., 33 (2023), pp.~1517--1544

\bibitem{henning2020sobolev}
{\sc P.~Henning and D.~Peterseim}, {\em {Sobolev} gradient flow for the
  {Gross--Pitaevskii} eigenvalue problem: Global convergence and computational
  efficiency}, SIAM J. Numer. Anal., 58 (2020), pp.~1744--1772.

\bibitem{Jeanjean}
{\sc L. Jeanjean and S. Lu}, {\em On global minimizers for a mass constrained problem}, Calc.
Var. Part. Differ. Equ., 64 (2022) p.~214.

\bibitem{kojima2017some}
{\sc H.~Kojima, T.~Matsuo, and D.~Furihata}, {\em Some discrete inequalities
  for central-difference type operators}, Math. Comp.,
  86 (2017), pp.~1719--1739.

\bibitem{liu2021normalized}
{\sc W.~Liu and Y.~Cai}, {\em Normalized gradient flow with {Lagrange}
  multiplier for computing ground states of {Bose--Einstein} condensates}, SIAM J. Sci. Comput.,
  43 (2021), pp.~B219--B242.

\bibitem{liu2025action}
{\sc W.~Liu, C.~Wang, and X.~Zhao}, {\em On action ground states of defocusing
  nonlinear {Schr{\"o}dinger} equations}, Math. Models Methods Appl. Sci., 35 (2025), pp.~39--74.

\bibitem{liu2025computing}
{\sc W.~Liu, Z.~Wen, Y.~Yuan, and X.~Zhao}, {\em Computing defocusing
  action ground state of rotating nonlinear {Schr\"odinger} equation: methods
  via various formulations and comparison}, J. Comput. Phys.,
  538 (2025), p.~114193.


\bibitem{liu2023computing}
{\sc W.~Liu, Y.~Yuan, and X.~Zhao}, {\em Computing the action ground state for
  the rotating nonlinear {Schr{\"o}dinger} equation}, SIAM J. Sci. Comput.,
  45 (2023), pp.~A397--A426.


\bibitem{pohozaev1965eigenfunctions}
{\sc S.~I. Pohozaev}, {\em Eigenfunctions of the equation {$\Delta u + \lambda
  f(u) = 0$}}, Sov. Math. Dokl., 5 (1965), pp.~1408--1411.


\bibitem{shatah1985instability}
{\sc J.~Shatah and W.~A. Strauss}, {\em Instability of nonlinear bound states},
  Comm. Math. Phys., 100 (1985), pp.~173--190.

\bibitem{shen2011spectral}
{\sc J.~Shen, T.~Tang, and L.-L. Wang}, {\em Spectral Methods: Algorithms,
  Analysis and Applications}, vol.~41 of Springer Series in Computational
  Mathematics, Springer-Verlag, Berlin, Heidelberg, 2011.

\bibitem{strauss1977existence}
{\sc W.~A. Strauss}, {\em Existence of solitary waves in higher dimensions},
  Comm. Math. Phys., 55 (1977), pp.~149--162.

\bibitem{WangChunshan}
{\sc C. Wang}, {\em Computing the least action ground state of the nonlinear Schr\"odinger equation by a normalized gradient flow}, J. Comput. Phys., 471 (2022) p.~111675.

\bibitem{wang2014projection}
{\sc H.~Wang}, {\em A projection gradient method for computing ground state of
  spin-2 {Bose-Einstein} condensates}, J. Comput. Phys., 274
  (2014), pp.~473--488.

\bibitem{willem1996minimax}
{\sc M. Willem}, {\em Minimax Theorems}, vol.~24 of Progress in Nonlinear Differential Equations and Their Applications, Birkh\"auser Boston, Boston, MA, 1996.


\bibitem{zhang2022exponential}
{\sc Z.~Zhang}, {\em Exponential convergence of {Sobolev} gradient descent for
  a class of nonlinear eigenproblems}, Comm. Math. Sci.,
  20 (2022), pp.~377--403.

\bibitem{zhou2003analysis}
{\sc A.~Zhou}, {\em An analysis of finite-dimensional approximations for the
  ground state solution of {Bose–Einstein} condensates}, Nonlinearity, 17
  (2003), pp.~541--550.

\bibitem{zhuang2019efficient}
{\sc Q.~Zhuang and J.~Shen}, {\em Efficient {SAV} approach for imaginary time
  gradient flows with applications to one- and multi-component {Bose-Einstein}
  condensates}, J. Comput. Phys., 396 (2019), pp.~72--88.
\end{thebibliography}
\end{document}